\theoremstyle{definition}
\newtheorem{defin}{Definition}[section]
\newtheorem{rem}[defin]{Remark}
\theoremstyle{plane}
\newtheorem{theo}[defin]{Theorem}
\newtheorem{prop}[defin]{Proposition}
\newtheorem{coroll}[defin]{Corollary}
\newtheorem{lemma}[defin]{Lemma}
\newcommand{\mbb}{\mathbb}
\newcommand{\mc}{\mathcal}
\newcommand{\veps}{\varepsilon}
\newcommand{\what}{\widehat}
\newcommand{\wtilde}{\widetilde}
\newcommand{\vphi}{\varphi}
\newcommand{\oline}{\overline}
\newcommand{\ra}{\rightarrow}
\newcommand{\upra}{\rightharpoonup}
\newcommand{\hra}{\hookrightarrow}
\newcommand{\R}{\mathbb{R}}
\newcommand{\N}{\mathbb{N}}
\newcommand{\Z}{\mathbb{Z}}
\renewcommand{\div}{{\rm div}\,}
\renewcommand{\det}{{\rm det}}
\newcommand{\Id}{{\rm Id}\,}
\def\d{\partial}
\def\div{{\rm div}\,}
\title{\large{\bfseries{CONSERVATION OF GEOMETRIC STRUCTURES FOR NON-HOMOGENEOUS INVISCID INCOMPRESSIBLE FLUIDS}}}
\author{\textsl{Francesco Fanelli} \vspace{0.3cm}\\
\small{\textsc{SISSA}}\\
\footnotesize{Functional Analysis and Applications Sector}\\
\footnotesize{\textit{via Bonomea 265 - 34136 Trieste, ITALY}} \vspace{0.2cm}\\
\small{\textsc{Universit\'e Paris-Est}}\\
\footnotesize{Laboratoire d'Analyse et de Math\'ematiques Appliqu\'ees, UMR CNRS 8050}\\
\footnotesize{\textit{61 Avenue du G\'en\'eral De Gaulle - 94010 Cr\'eteil Cedex, FRANCE}} \vspace{0.2cm}\\
\small{\ttfamily{francesco.fanelli@sissa.it} \hspace{1cm} francesco.fanelli@math.cnrs.fr}}
\date\today
\begin{document}

\maketitle

\subsubsection*{Abstract}
{\small  We obtain a result about propagation of geometric properties for solutions
of the non-homogeneous incompressible Euler system in any dimension $N\geq2$. In particular, we investigate conservation
of striated and conormal regularity, which is a natural way of generalizing the $2$-D structure of vortex patches. The results
we get are only local in time, even in the dimension $N=2$; however, we provide an explicit lower bound for
the lifespan of the solution. In the case of physical
dimension $N=2$ or $3$, we investigate also propagation of H\"older regularity in the interior of a bounded domain.}

\section{Introduction}

In this paper we are interested in studying conservation of geometric properties for solutions of the density-dependent
incompressible Euler system
\begin{equation}\label{eq:ddeuler}
\left\{\begin{array}{l}
\d_t\rho\,+\,u\cdot\nabla \rho\,=\,0\\[1ex]
\rho\left(\d_tu\,+\,u\cdot\nabla u\right)\,+\,\nabla\Pi\,=\,0\\[1ex]
\div u\,=\,0\,,
\end{array}\right.
\end{equation}
which describes the evolution of a non-homogeneous inviscid fluid with no body force acting on it, an assumption we will
make throughout all this paper to simplify the presentation.
Here, $\rho(t,x)\in\R_+$ represents the density of the fluid, $u(t,x)\in\R^N$ its velocity field and $\Pi(t,x)\in\R$ its pressure.
The term $\nabla\Pi$ can be also seen as the Lagrangian multiplier associated to the divergence-free constraint over the velocity.

We will always suppose that the variable $x$ belongs to the whole space $\,\R^N$.

\medskip
The problem of preserving geometric structures came out already in the homogeneous case, for which $\rho\equiv1$ and system
\eqref{eq:ddeuler} becomes
$$
\left\{\begin{array}{l}
\d_tu\,+\,u\cdot\nabla u\,+\,\nabla\Pi\,=\,0\\[1ex]
\div u\,=\,0\,,
\end{array}\right.
\leqno(E)
$$
in studying $2$-dimensional vortex patches, that is to say the initial vorticity $\Omega_0$ is the characteristic function of a
bounded domain $D_0$. As we will explain below, in the case of higher dimension $N\geq3$ this notion is generalized by
the properties of striated and conormal regularity.

The vorticity of the fluid is defined as the skew-symmetric matrix
\begin{equation} \label{def:vort}
\Omega\,:=\,\nabla u\,-\,^t\nabla u
\end{equation}
and in the homogenous case it satisfies the equation
$$
\d_t\Omega\,+\,u\cdot\nabla\Omega\,+\,\Omega\cdot\nabla u\,+\,^t\nabla u\cdot\Omega\,=\,0\,.
$$
In dimension $N=2$ it can be identified with the scalar function $\omega\,=\,\partial_1u^2\,-\,\partial_2u^1$,
while for $N=3$ with the vector-field $\omega\,=\,\nabla\times u$. Let us recall also that in the bidimensional case
this quantity is transported by the velocity field:
$$
\d_t\omega\,+\,u\cdot\nabla\omega\,=\,0\,.
$$

The notion of vortex patches was introduced in \cite{Y} and gained new interest after the survey paper \cite{M} of Majda.
In the case $\,N=2\,$ Yudovich's theorem ensures the existence of a unique global
solution of the homogeneous Euler system, which preserves the geometric structure: the vorticity remains the characteristic
function of the evolution (by the flow associated to this solution) of the domain $D_0$. Vortex patches in bounded domains of $\R^2$
were also studied by Depauw (see \cite{Dep}), while Dutrifoy in \cite{Dut} focused on the case of domains in $\R^3$.
Moreover, in \cite{Ch1991} Chemin proved that, if the initial domain has boundary $\d D_0$ of class
$\mc{C}^{1,\veps}$ for some $\veps>0$, then this regularity is preserved during the evolution for small times; in \cite{Ch1993} he
also showed a global in time persistence issue. In \cite{D1997-sing} Danchin
considered instead the case in which initial data of the Euler system are vortex patches with singular boundary: he proved that
if $\d D_0$ is regular apart from a closed subset, then it remains regular for all times, apart from the closed subset transported
by the flow associated to the solution.

In the case $N\geq3$ one can't expect to have global results anymore, nor to preserve the initial vortex patch structure, because
of the presence of the stretching term in the vorticity equation. Nevertheless, it's possible to introduce the definition of
\emph{striated regularity}, which generalizes in a quite natural way the previous one of vortex patch: it means that the vorticity is
more regular along some fixed directions, given by a nondegenerate family of vector-fields (see definition \ref{d:stri} below).
This notion was introduced first by Bony in \cite{Bo1979} in studying hyperbolic equations, and then adapted by Alinhac (see \cite{A})
and Chemin (see \cite{Ch1988}) for nonlinear partial differential equations.

In \cite{G-SR}, Gamblin and Saint-Raymond proved that striated regularity is preserved during the
evolution in any dimension $N\geq3$, but, as already remarked, only locally in time (see also \cite{S}). They also obtained
global results if the initial data have other nice properties (e.g., if the initial velocity is axisymmetric).

As Euler system is, in a certain sense, a limit case of the Navier-Stokes system as the viscosity of the fluid goes to $0$,
it's interesting to study if there is also ``convergence'' of the geometric properties of the solutions. Recently Danchin
proved results on striated regularity for the solutions of the Navier-Stokes system
$$
\left\{\begin{array}{l}
\d_tu\,+\,u\cdot\nabla u\,-\,\nu\,\Delta u\,+\,\nabla\Pi\,=\,0 \\[1ex]
\div u\,=\,0\,,
\end{array}\right.
\leqno(N\!S_\nu)
$$
in \cite{D1997} for the $2$-dimensional case, in \cite{D1999} for the general one. Already in the former paper, he had to dismiss the
vortex patch structure ``stricto sensu'' due to the presence of the viscous term, which comes out also in the vorticity equation and has
a smoothing effect;
however, he still got global in time results. Moreover, in both his works he had to handle with spaces of
type $B^{1+\veps}_{p,\infty}$ (with $p\in\,]2,+\infty[$ and $\veps\in\,]2/p,1[\,$) due to technical reasons which come
out with a viscous fluid. Let us immediately clarify that these problems have been recently solved by Hmidi in \cite{Hm}
(see also \cite{B-C-D}). In the above mentioned works
Danchin proved also a priori estimates for solutions of $(N\!S_\nu)$ independent of the viscosity $\nu$, therefore
preservation of the geometric structures in passing from solutions of $(N\!S_\nu)$ to solutions of $(E)$ in the limit $\nu\ra0$.

In this paper we come back to the inviscid case and we study the non-homogeneous incompressible Euler system
\eqref{eq:ddeuler}. We will prove that preservation of geometric properties of initial data, such as striated and conormal
regularity, still holds in this setting, as in the classical (homogeneous) one.

Let us point out that considering geometric structures is not only a generalization of the classical problem of vortex patches.
It can be also seen as an improvement to the well-posedness result for \eqref{eq:ddeuler} in critical
Besov spaces $B^s_{\infty,r}$ (see paper \cite{D-F}). As a matter of fact, here we will make lower regularity hypothesis on
the initial data, which will be compensated by the additional geometric assumptions.

Note that in the $2$-dimensional
case the equation for the vorticity reads
$$
\d_t\omega\,+\,u\cdot\nabla\omega\,+\,\nabla\left(\frac{1}{\rho}\right)\wedge\nabla\Pi\,=\,0\,,
$$
so it's not better than in higher dimension due to the presence of the density term, which doesn't allow us to get conservation of
Lebesgue norms. This is also the reason why it's not clear if Yudovich's theorem still holds true for non-homogeneous fluids: having
$\omega_0\in L^q\cap L^\infty$, combined with suitable hypothesis on $\rho_0$, doesn't give rise to a local solution. \\
So, we will immediately focus on the general case $N\geq2$. We will assume the initial velocity $u_0$ and the initial
vorticity $\Omega_0$ to be in some Lebesgue spaces, in order to assure the pressure term to belong to $L^2$, a
requirement we could not bypass.
As a matter of fact, $\nabla\Pi$ satisfies an elliptic equation with low regularity coefficient,
$$
-\,\div\left(a\,\nabla\Pi\right)\,=\,\div F\,,
$$
and it can be solved independently of $a$ only in the energy space $L^2$.
Moreover, we will suppose $\Omega_0$ to have regularity properties of geometric type.
Obviously, we will require some natural but quite general hypothesis also on the initial density $\rho_0$ of the fluid: we suppose
$\rho_0$ to be bounded with its gradient and that it satisfies geometric assumptions analogous to those for $\Omega_0$.
Let us point out that proving the velocity field to be Lipschitz, which was the key part in the homogeneous
case, works as in this setting: it relies on Biot-Savart law and it requires no further hypothesis on the density term.
Let us also remark that no smallness conditions over the density are needed. Of course, we will get only local in
time results. Moreover, we will see that geometric structures propagate also to the velocity field and to the pressure term.

\medskip
Our paper is organized in the following way.

In the first part, we will recall basic facts about Euler system: some properties of the vorticity and how to
associate a flow to the velocity field. In this section we will also give the definition of the geometric properties
we are studying and we will state the main results we got about striated and conormal regularity.

In section \ref{s:tools}, we will explain the mathematical tools, from Fourier Analysis, we need to prove our claims:
we will present the Littlewood-Paley decomposition and some techniques coming from paradifferential calculus. In particular,
we will introduce the notion of paravector-field, as defined in \cite{D1999}: it will play a fundamental role
in our analysis, because it is, in a certain sense, the principal part of the derivation operator along a fixed vector-field.
Moreover, we will also quote some results about
transport equations in H\"older spaces and about elliptic equations in divergence form with low regularity coefficients.

In section \ref{s:striated}, we will finally be able to tackle the proof of our result about striated regularity. First of all, we will
state a priori estimates for suitable smooth solutions of the Euler system \eqref{eq:ddeuler}. Then from them we will get, in a quite
classical way, the existence of a solution with the required properties: we will construct a sequence of regular solutions of
system \eqref{eq:ddeuler} with approximated data, and, using a compactness argument, we will show the convergence of this sequence
to a ``real'' solution. Proving preservation of the geometric structure requires instead strong convergence in
rough spaces of type $\mc{C}^{-\alpha}$ (for some $\alpha>0$). The uniqueness of the solution will follow from a stability result
for our equations. In the following section, we will also give an estimate from below for the lifespan of the solution.

Finally, we will spend a few words about conormal regularity: proving its propagation from the previous result is standard and
can be done as in the homogenous setting. As a consequence, inspired by what done in Huang's paper \cite{Hu}, in the physical
case of space dimension $N=2$ or $3$ we can improve our result: we will also show
that, if the initial data are H\"older continuous in the interior of a suitably smooth bounded domain, the solution
conserves this property during the time evolution, i.e. it is still H\"older continuous
in the interior of the domain transported by the flow.

\section{Basic definitions and main results}

Let $\left(\rho,u,\nabla\Pi\right)$ be a solution of the density-dependent incompressible Euler system \eqref{eq:ddeuler}
over $[0,T]\times\R^N$ and let us denote the vorticity of the fluid by $\Omega$.
As in the homogeneous case, it will play a fundamental role throughout all this paper, so let us spend a few words about it.

From the definition \eqref{def:vort}, it is obvious that, for all $q\in[1,+\infty]$, if $\,\nabla u\in L^q$, then also $\,\Omega\in L^q$.
Conversely, if  $u$ is divergence-free, then for all $1\leq i\leq N$ we have $\,\Delta u^i\,=\,\sum_{j=1}^N\partial_j\Omega_{ij}$, and so, formally,
\begin{equation} \label{eq:BS-law}
 u^i\,=\,-\left(-\Delta\right)^{-1}\sum^N_{j=1}\d_j\Omega_{ij}\,.
\end{equation}
This is the Biot-Savart law, and it says that a divergence free vector-field $u$ is completely determined by its vorticity.
From \eqref{eq:BS-law} we immediately get
\begin{equation} \label{eq:BS_grad}
\nabla u^i\,=\,-\nabla\left(-\Delta\right)^{-1}\sum_{j=1}^N\partial_j\Omega_{ij}\,.
\end{equation}
Now, as the symbol of the operator $-\d_i\left(-\Delta\right)^{-1}\d_j$ is $\sigma(\xi)=\xi_i\xi_j/|\xi|^2$,
the classical Calderon-Zygmund theorem ensures that\footnote{This time the extreme values of $q$ are not included.}
for all $q\in\,]1,+\infty[\,$,  if $\,\Omega\in L^q$ then $\,\nabla u\in L^q$ and
\begin{equation}\label{est:CZ}
\|\nabla u\|_{L^q}\leq C\,\frac{q^2}{q-1}\,\|\Omega\|_{L^q}\,.
\end{equation}

In dimension $N=2$ the vorticity equation is simpler than in the general case due to the absence of the stretching term. Nevertheless,
as remarked above, the exterior product involving density and pressure terms makes it impossible to get conservation of Lebesgue norms,
which was the fundamental issue to get global existence for the classical system $(E)$.
So, we immediately focus on the case $N\geq2$ whatever, in which the vorticity equation reads
\begin{equation} \label{eq:vort}
 \d_t\Omega\,+\,u\cdot\nabla\Omega\,+\,\Omega\cdot\nabla u\,+\,^t\nabla u\cdot\Omega\,+\,
\nabla\left(\frac{1}{\rho}\right)\wedge\nabla\Pi\,=\,0\,,
\end{equation}
where, for two vector-fields $v$ and $w$, we have set $v\wedge w$ to be the skew-symmetric matrix with components
$$
\left(v\wedge w\right)_{ij}\,=\,v^jw^i\,-\,v^iw^j\,.
$$

Finally, recall that we can associate a flow $\psi$ to the velocity field $u$ of the fluid: it is defined by the relation
$$
\psi(t,x)\,\equiv\,\psi_t(x)\,:=\,x\,+\,\int^t_0u(\tau,\psi_\tau(x))\,d\tau
$$
for all $(t,x)\in[0,T]\times\R^N$ and it is, for all fixed $t\in[0,T]$, a diffeomorphism over $\R^N$, if
$\nabla u\in L^\infty$. Let us remark that the flow is still well-defined (in a generalized sense)
even if $u$ is only log-Lipschitz continuous, but it is no more a diffeomorphism (see e.g. chapter 3 of \cite{B-C-D}, or
\cite{C1995}, for more details).

\medskip
Let us now introduce the geometric properties we are handling throughout this paper.
The first notion we are interested in is the \emph{striated regularity}, that is to say initial data are more regular
along some given directions.

So, let us take a family $X=\left(X_\lambda\right)_{1\leq\lambda\leq m}$ of $m$ vector-fields
with components and divergence of class $\mc{C}^\veps$ for some fixed $\veps\in\,]0,1[\,$. We also suppose this family to be
non-degenerate, i.e.
$$
I(X)\,:=\,\inf_{x\in\mbb{R}^N}\,\sup_{\Lambda\in\Lambda^m_{N-1}}\,\left|\stackrel{N-1}{\wedge}X_\Lambda(x)\right|^{\frac{1}{N-1}}
\,\,\,>\,0\,.
$$
Here $\Lambda\in\Lambda^m_{N-1}$ means that $\Lambda=\left(\lambda_1,\ldots,\lambda_{N-1}\right)$, with each
$\lambda_i\in\left\{1,\ldots,m\right\}$ and $\lambda_i<\lambda_j$ for $i<j$, while the symbol
$\stackrel{N-1}{\wedge}X_\Lambda$ stands for the element of $\R^N$ such that
$$
\forall\,\,Y\in\R^N\,,\qquad\left(\stackrel{N-1}{\wedge}X_\Lambda\right)\cdot Y\,=\,
\det\left(X_{\lambda_1}\ldots X_{\lambda_{N-1}},Y\right)\,.
$$
For each vector-field of this family we put
$$
\wtilde{\|}X_\lambda\|_{\mc{C}^\veps}\,:=\,\|X_\lambda\|_{\mc{C}^\veps}\,+\,\|\div X_\lambda\|_{\mc{C}^\veps}\,,
$$
while we will use the symbol $|||\,\cdot\,|||$ in considering the supremum over all indices
$\lambda\in\Lambda^m_1=\{1\ldots m\}$.

\begin{defin} \label{d:stri}
Take a vector-field $Y$ with components and divergence in $\mc{C}^\veps$ and fix a $\eta\in[\veps,1+\veps]$.
A function $f\in L^\infty$ is said to be of class $\mc{C}^\eta$ along $Y$, and we write $\,f\in\mc{C}^\eta_Y$, if
$\div\left(f\,Y\right)\in\,\mc{C}^{\eta-1}\left(\R^N\right)$.

If $X=\left(X_\lambda\right)_{1\leq\lambda\leq m}$ is a non-degenerate family of vector-fields as above, we define
$$
\mc{C}^\eta_X\,:=\,\bigcap_{1\leq\lambda\leq m}\,\mc{C}^\eta_{X_\lambda}\qquad\mbox{ and }
\qquad
\left\|f\right\|_{\mc{C}^\eta_X}\,:=\,\frac{1}{I(X)}\,\left(\|f\|_{L^\infty}\,\wtilde{|||}X|||_{\mc{C}^\veps}\,+\,
|||\,\div\left(f\,X\right)|||_{\mc{C}^{\eta-1}}\right)\,.
$$
\end{defin}

\begin{rem} \label{r:div}
 Our aim is to investigate H\"older regularity of the derivation of $f$ along a fixed vector-field (say) $Y$, i.e. the
quantity
$$
\d_Yf\,:=\,\,\sum_{i=1}^N\,Y^i\,\d_if\,.
$$
If $f$ is only bounded, however, this expression has no meaning: this is why we decided to focus on $\,\div(f\,Y)$, as done
in the literature about this topic (see also \cite{D1999}, section 1). Lemma \ref{l:div}
below will clarify the relation between these two quantities.
\end{rem}


Now, let us take a vector-field $X_0$ and define its time evolution $X(t)$:
\begin{equation} \label{def:X}
X(t,x)\,\equiv\,X_t(x)\,:=\,\d_{X_0(x)}\psi_t\left(\psi^{-1}_t(x)\right)\,,
\end{equation}
that is $X(t)$ is the vector-field $X_0$ transported by the flow associated to $u$.
From this definition, it immediately follows that $\left[X(t)\,,\,\d_t+u\cdot\nabla\right]=0$, i.e. $X(t)$ satisfies the following
system:
\begin{equation} \label{eq:X}
 \left\{\begin{array}{l}
         \left(\d_t\,+\,u\cdot\nabla\right)X\,=\,\d_Xu \\[1ex]
         X_{|t=0}\,=\,X_0\,.
        \end{array}\right.
\end{equation}

We are now ready for stating our first result, on striated regularity.
\begin{theo} \label{t:stri-N}
 Fix $\veps\in\,]0,1[$ and take a non-degenerate family of vector-fields $X_0=\left(X_{0,\lambda}\right)_{1\leq\lambda\leq m}$
over $\R^N$, whose components and divergence are in $\mc{C}^\veps$. \\
Let the initial velocity field $u_0\in L^p$, with $p\in\,]2,+\infty]$, and its vorticity $\Omega_0\in L^\infty\cap L^q$, with
$q\in\,[2,+\infty[$ such that $1/p\,+\,1/q\,\geq\,1/2$. \\
Suppose the initial density $\rho_0\in W^{1,\infty}$ to be such that $0<\rho_*\leq\rho_0\leq\rho^*$. \\
Finally, let us assume that $\Omega_0$ and $\nabla\rho_0$ both belong to $\mc{C}^\veps_{X_0}$.

Then there exist a time $T>0$ and a unique solution $\left(\rho,u,\nabla\Pi\right)$ of system \eqref{eq:ddeuler}, such that:
\begin{itemize}
 \item $\rho\,\in\,L^\infty([0,T];W^{1,\infty})\cap\mc{C}_b([0,T]\times\R^N)$, such that $0<\rho_*\leq\rho\leq\rho^*$
at every time;
 \item $u\,\in\,\mc{C}([0,T];L^p)\cap L^\infty([0,T];\mc{C}^{0,1})$, with
$\d_tu\,\in\,\mc{C}([0,T];L^2)$ and vorticity $\,\Omega\,\in\,\mc{C}([0,T];L^q)$;
 \item $\nabla\Pi\,\in\,\mc{C}([0,T];L^2)$, with $\nabla^2\Pi\,\in\,L^\infty([0,T];L^\infty)$.
\end{itemize}
Moreover, the family of vector-fields transported by the flow still remains, at every time, non-degenerate and with components and
divergence in $\mc{C}^\veps$, and striated regularity is preserved: for all $t\in[0,T]$, one has
$$
\nabla\rho(t)\,,\,\,\Omega(t)\,\,\in\;\mc{C}^{\veps}_{X(t)}\qquad\mbox{ and }\qquad
u(t)\,,\,\,\nabla\Pi(t)\,\,\in\;\mc{C}^{1+\veps}_{X(t)}
$$
uniformly on $[0,T]$.
\end{theo}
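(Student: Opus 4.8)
The plan is to prove Theorem \ref{t:stri-N} by the classical scheme for quasilinear problems: a priori estimates, construction of approximate solutions, compactness, then uniqueness via a stability estimate. First I would establish the \emph{a priori estimates} for smooth solutions. The basic energy identity gives control of $\|\sqrt{\rho}\,u\|_{L^2}$, hence of $\|u\|_{L^2}$ using $\rho\geq\rho_*$; combined with the assumption $u_0\in L^p$ (which is propagated by the transport structure of the momentum equation, once $\nabla u\in L^\infty$) and the Calder\'on--Zygmund estimate \eqref{est:CZ} for $\Omega\in L^q$, interpolation yields $u\in L^p$ and $\nabla u$ in all intermediate Lebesgue spaces. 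The crucial point — as emphasized in the introduction — is to close a Lipschitz bound on $u$. Here I would differentiate the Biot--Savart law \eqref{eq:BS_grad} and split it using the Littlewood--Paley decomposition: low frequencies are controlled by $\|\Omega\|_{L^q}$, high frequencies by the striated norm $\|\Omega\|_{\mc{C}^\veps_X}$ via the standard logarithmic estimate for the gradient of a divergence-free field with striated vorticity (the Chemin/Gamblin--Saint-Raymond argument, which requires nondegeneracy of the family $X$ but no hypothesis on $\rho$). This produces an inequality of the form $\|\nabla u\|_{L^\infty}\lesssim \Phi\bigl(\|\Omega\|_{L^q},\|\Omega\|_{\mc{C}^\veps_X},\log(\ldots)\bigr)$.

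Next I would set up the coupled system of estimates for the geometric quantities. The family $X(t)$ solves \eqref{eq:X}; since $\partial_Xu$ has the same regularity as the coefficients of the equation one obtains, by the transport estimates in H\"older spaces quoted in Section \ref{s:tools} together with the paravector-field machinery of \cite{D1999}, that $|||X(t)|||_{\mc{C}^\veps}$ and $I(X(t))^{-1}$ stay finite for a short time, controlled by $\int_0^t\|\nabla u\|_{L^\infty}$. The vorticity equation \eqref{eq:vort} and the density equation are then estimated in the striated norm $\mc{C}^\veps_{X(t)}$: the transport and stretching terms are handled as in the homogeneous case, and the genuinely new term is $\nabla(1/\rho)\wedge\nabla\Pi$. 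To control it one needs $\nabla^2\Pi\in L^\infty$ and $\nabla^2\Pi$ striated; this comes from the elliptic equation $-\div(a\nabla\Pi)=\div F$ with $a=1/\rho$, $F=-u\cdot\nabla u$, solved in $L^2$ (which is why $u_0\in L^p$, $\Omega_0\in L^q$ with $1/p+1/q\geq1/2$ is imposed, so that $F\in L^2$), and then the results on elliptic equations in divergence form with low-regularity coefficients — also quoted in Section \ref{s:tools} — upgrade this to $\nabla\Pi\in\mc{C}^{1+\veps}_X$ provided $a$, equivalently $\nabla\rho$, is in $\mc{C}^\veps_X$ and $\div F$ is suitably striated. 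Putting these together gives a closed nonlinear differential inequality for the quantity $E(t):=\|\Omega(t)\|_{L^q}+\|\Omega(t)\|_{\mc{C}^\veps_{X(t)}}+\|\nabla\rho(t)\|_{\mc{C}^\veps_{X(t)}}+|||X(t)|||_{\mc{C}^\veps}+I(X(t))^{-1}$, whose solution exists on some $[0,T]$ with $T$ depending only on the data; this also furnishes the explicit lower bound on the lifespan announced in the statement.

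With the a priori estimates in hand I would build a sequence of smooth solutions: regularize the data (mollify $\rho_0$, $u_0$, $X_0$ keeping the bounds $\rho_*\leq\rho_0^n\leq\rho^*$ and nondegeneracy of $X_0^n$), solve the regularized systems by standard theory, and note that the uniform a priori bounds hold on a common interval $[0,T]$. Compactness in the $L^2$/$L^p$/Besov topology (Aubin--Lions, using the equations to bound time derivatives) gives a limit $(\rho,u,\nabla\Pi)$ solving \eqref{eq:ddeuler} with the stated functional-space regularity. For the geometric part one cannot pass to the limit by weak convergence of $\div(fX)$ alone; following the strategy described in the introduction I would instead prove \emph{strong} convergence of the approximate solutions in some negative-regularity space $\mc{C}^{-\alpha}$, then interpolate with the uniform $\mc{C}^\veps_X$ bounds to recover $\Omega(t),\nabla\rho(t)\in\mc{C}^\veps_{X(t)}$ and $u(t),\nabla\Pi(t)\in\mc{C}^{1+\veps}_{X(t)}$, together with persistence of nondegeneracy and $\mc{C}^\veps$ regularity of $X(t)$. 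Finally, uniqueness follows from a stability estimate: given two solutions, the differences $\delta\rho,\delta u,\delta\Pi$ satisfy a linearized system, and estimating $\delta u$ in $L^2$ and $\delta\rho$ in an appropriate negative H\"older or $L^2$-type norm (a loss of one derivative being unavoidable since the velocity is only Lipschitz) yields a Gr\"onwall inequality forcing the differences to vanish.

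The main obstacle, I expect, is the treatment of the pressure term. Unlike the homogeneous case, $\nabla\Pi$ is not given by a simple singular integral of $u\otimes u$ but solves a variable-coefficient elliptic equation whose coefficient $a=1/\rho$ is merely Lipschitz with striated $\mc{C}^\veps$ regularity; one must show that $\nabla^2\Pi$ inherits both an $L^\infty$ bound (needed to close the Lipschitz estimate on $u$) and the striated $\mc{C}^\veps$ regularity (needed to close the vorticity estimate), with constants depending on $\rho$ only through $\rho_*,\rho^*,\|\nabla\rho\|_{\mc{C}^\veps_X}$ and \emph{no} smallness assumption. This requires a careful localized Schauder-type analysis of the elliptic operator in striated spaces, and the interplay between the $L^2$ solvability of the elliptic problem and the geometric estimates is the delicate heart of the argument.
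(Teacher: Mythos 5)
Your proposal follows the same architecture as the paper's actual proof: a priori estimates closing a logarithmic Lipschitz bound on $u$ via Biot--Savart and striated vorticity (Proposition \ref{p:Du_L^inf}), a coupled Gronwall system for $X(t)$, $\nabla\rho$, $\Omega$ in striated norms, $L^2$ solvability of the pressure equation by Lax--Milgram, regularization of the data, compactness through strong $\mc{C}^{-\alpha}_{loc}$ convergence of the approximate solutions, and uniqueness via an $L^2$ stability estimate. The one place where you mischaracterize the technique is the upgrade of $\nabla\Pi$ to $\mc{C}^{1+\veps}_X$: the paper does not invoke Schauder-type elliptic regularity in striated spaces (no such result is quoted in Section \ref{s:tools}), but instead rewrites $-\Delta\Pi=\nabla(\log\rho)\cdot\nabla\Pi+\rho\,\div(u\cdot\nabla u)$, uses the pointwise symbol identity of Lemma \ref{l:Du_L^inf} to bound $\nabla^2\Pi$ in $L^\infty$ by $\|\Delta\Pi\|_{L^\infty}+\|\d_X\nabla\Pi\|_{L^\infty}$, and then estimates $\d_X\nabla\Pi$ in $\mc{C}^\veps$ term by term through the paravector-field and commutator machinery of Lemmas \ref{l:T_X}, \ref{l:comm}, \ref{l:pvec-pprod}.
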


Another interesting notion, strictly related to the previous one, is that of \emph{conormal regularity}.
First of all, we have to recall a definition (see again section 1 of \cite{D1999}).
\begin{defin}
Let $\Sigma\subset\mbb{R}^N$ be a compact hypersurface of class $\mc{C}^{1,\veps}$. Let us denote by $\mc{T}^\veps_\Sigma$ the set
of all vector-fields $X$ with components and divergence in $\mc{C}^\veps$, which are tangent to $\Sigma$, i.e.
$\d_XH\,_{|\Sigma}\equiv0$ for all local equations $H$ of $\Sigma$.

Given a $\eta\in[\veps,1+\veps]$, we say that a function $f\in L^\infty$ belongs to the space $\mc{C}^{\eta}_\Sigma$ if
$$
\forall\,X\,\in\,\mc{T}^\veps_\Sigma\;\;,\qquad\div\left(f\,X\right)\,\in\,\mc{C}^{\eta-1}\,.
$$
\end{defin}

Similarly to what happens for striated regularity, also conormal structure propagates during the time evolution.
\begin{theo} \label{t:conorm-N}
 Fix $\veps\in\,]0,1[$ and take a $\mc{C}^{1,\veps}$ compact hypersurface $\Sigma_0\subset\R^N$. \\
Let us suppose the initial velocity field $u_0\in L^p$, with $p\in\,]2,+\infty]$, and its vorticity $\Omega_0\in L^\infty\cap L^q$, with
$q\in\,[2,+\infty[$ such that $1/p\,+\,1/q\,\geq\,1/2$. \\
Assume that the initial density $\rho_0\in W^{1,\infty}$ be such that $0<\rho_*\leq\rho_0\leq\rho^*$. \\
Finally, let $\Omega_0$ and $\nabla\rho_0$ belong to $\mc{C}^{\veps}_{\Sigma_0}$.

Then there exist a time $T>0$ and a unique solution $(\rho,u,\nabla\Pi)$ of system \eqref{eq:ddeuler}, which verifies
the same properties of theorem \ref{t:stri-N}. \\
Moreover, if we define
$$
\Sigma(t)\,:=\,\psi_t\left(\Sigma_0\right)\,,
$$
$\Sigma(t)$ is, at every time $t\in[0,T]$, a hypersurface of class $\mc{C}^{1,\veps}$ of $\R^N$, and conormal regularity is preserved:
at every time $t\in[0,T]$, one has
$$
\nabla\rho(t)\,,\,\,\Omega(t)\,\,\in\;\mc{C}^{\veps}_{\Sigma(t)}\qquad\mbox{ and }\qquad
u(t)\,,\,\,\nabla\Pi(t)\,\,\in\;\mc{C}^{1+\veps}_{\Sigma(t)}
$$
uniformly on $[0,T]$.
\end{theo}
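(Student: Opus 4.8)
The plan is to deduce Theorem~\ref{t:conorm-N} from Theorem~\ref{t:stri-N} through a localization argument, reproducing the homogeneous-case reasoning of \cite{Ch1988} and \cite{D1999}. First I would build a non-degenerate family of vector-fields adapted to $\Sigma_0$. Covering the compact $\mc{C}^{1,\veps}$ hypersurface $\Sigma_0$ by finitely many bounded open sets $U_j$ on each of which it is the zero set of some $H_j\in\mc{C}^{1,\veps}(U_j)$ with $\nabla H_j$ non-vanishing, I would take on each patch the fields $\d_kH_j\,e_l-\d_lH_j\,e_k$: they are orthogonal to $\nabla H_j$, hence tangent to all level sets of $H_j$; their components are in $\mc{C}^\veps$ and their divergence vanishes in the sense of distributions (mixed second derivatives commute, even for $H_j$ merely $\mc{C}^{1,\veps}$); and an appropriate selection of $N-1$ of them spans the tangent hyperplane of $\Sigma_0$ at each point. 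Gluing these by a partition of unity subordinate to the covering and appending the fields $(1-\chi)\,e_1,\ldots,(1-\chi)\,e_N$, where $\chi$ is a smooth cut-off equal to $1$ on a neighbourhood $V_1$ of $\Sigma_0$ and to $0$ outside a slightly larger neighbourhood $V_2$, I obtain a finite family $X_0=\left(X_{0,\lambda}\right)_{1\leq\lambda\leq m}$ with components and divergence in $\mc{C}^\veps$, with each $X_{0,\lambda}\in\mc{T}^\veps_{\Sigma_0}$, non-degenerate on the whole of $\R^N$ (the bound $I(X_0)>0$ following from compactness of $\Sigma_0$), and coinciding with a full frame on $\R^N\setminus V_2$. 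Since every $X_{0,\lambda}$ is an admissible test field for $\mc{C}^\veps_{\Sigma_0}$, one has $\mc{C}^\veps_{\Sigma_0}\hra\mc{C}^\veps_{X_0}$; in particular $\Omega_0,\nabla\rho_0\in\mc{C}^\veps_{X_0}$.

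Next I would invoke Theorem~\ref{t:stri-N} with this family $X_0$: it yields a time $T>0$ and a solution $(\rho,u,\nabla\Pi)$ of \eqref{eq:ddeuler} with all the regularity stated there, and since this solution does not involve the auxiliary family, its uniqueness in that class is exactly the one already granted by Theorem~\ref{t:stri-N} --- which settles the existence and uniqueness part of Theorem~\ref{t:conorm-N}. Theorem~\ref{t:stri-N} also provides that the transported family $X(t)$ of \eqref{def:X}--\eqref{eq:X} remains, for every $t\in[0,T]$, non-degenerate with components and divergence in $\mc{C}^\veps$, and that $\nabla\rho(t),\Omega(t)\in\mc{C}^\veps_{X(t)}$ while $u(t),\nabla\Pi(t)\in\mc{C}^{1+\veps}_{X(t)}$, uniformly on $[0,T]$.

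Then I would transfer these properties from $X(t)$ to $\Sigma(t):=\psi_t(\Sigma_0)$. Since $\nabla u\in L^\infty([0,T]\times\R^N)$, each $\psi_t$ is a bi-Lipschitz homeomorphism of $\R^N$, so $\Sigma(t)$ is a compact Lipschitz hypersurface; moreover $H_0\circ\psi_t^{-1}$ is a local equation of $\Sigma(t)$ whenever $H_0$ is one of $\Sigma_0$, and since $X(t)$ commutes with the transport operator $\d_t+u\cdot\nabla$, the quantity $\d_{X(t)}(H_0\circ\psi_t^{-1})$ is itself transported, hence vanishes on $\Sigma(t)$: each $X_\lambda(t)$ stays tangent to $\Sigma(t)$. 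Being in addition a non-degenerate $\mc{C}^\veps$ family, $X(t)$ spans the (a.e.-defined) tangent hyperplane of $\Sigma(t)$, so that the unit normal field of $\Sigma(t)$ --- locally a normalized $(N-1)$-fold exterior product of the $X_\lambda(t)$ --- is of class $\mc{C}^\veps$; a compact Lipschitz hypersurface with $\mc{C}^\veps$ unit normal is of class $\mc{C}^{1,\veps}$, which gives $\Sigma(t)\in\mc{C}^{1,\veps}$ for every $t\in[0,T]$. The conormal regularity claimed in Theorem~\ref{t:conorm-N} then amounts to the reverse inclusions $\mc{C}^\veps_{X(t)}\hra\mc{C}^\veps_{\Sigma(t)}$ (applied to $f=\nabla\rho(t),\Omega(t)$) and $\mc{C}^{1+\veps}_{X(t)}\hra\mc{C}^{1+\veps}_{\Sigma(t)}$ (applied to $f=u(t),\nabla\Pi(t)$).

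The hard part is precisely these reverse inclusions, but they are obtained as in the homogeneous case (\cite{Ch1988}, \cite{D1999}). Given $Y\in\mc{T}^\veps_{\Sigma(t)}$, I would split it with a cut-off into a piece supported near $\Sigma(t)$ and a piece supported away from it. Near $\Sigma(t)$ the fields $X_\lambda(t)$ span the tangent hyperplane, so $Y$ is there a local combination of them with $\mc{C}^\veps$ coefficients, and $\div(f\,Y)$ is recovered from the quantities $\div(f\,X_\lambda(t))\in\mc{C}^{\eta-1}$ together with derivatives along the $X_\lambda(t)$ of $\mc{C}^\veps$ functions; away from $\Sigma(t)$, where $X(t)$ contains a full frame, $f$ is genuinely of class $\mc{C}^\eta$ and $\div(f\,Y)$ is controlled directly. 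The main obstacle is the tangential part in the low-regularity case $\eta=\veps$: the products that arise there are of the borderline type $\mc{C}^\veps\cdot\mc{C}^{\veps-1}$, ill-defined by naive multiplication as soon as $\veps\leq1/2$. This is exactly what the paravector-field calculus of Section~\ref{s:tools} is designed for: one replaces $\div(f\,X_\lambda(t))$ and the operators $\d_{X_\lambda(t)}$ by their paradifferential parts (the associated paravector-fields), the link between the two quantities being Lemma~\ref{l:div}; the remaining low-order terms are harmless, and the needed paraproduct estimates then close the argument as in \cite{D1999}. Since all these analytic tools have already been prepared and the geometric reasoning is the classical one, this would complete the proof of Theorem~\ref{t:conorm-N}.
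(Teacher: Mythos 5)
Your proposal follows exactly the paper's strategy: construct a non-degenerate family $X_0\subset\mc{T}^\veps_{\Sigma_0}$ satisfying $\mc{C}^\eta_{\Sigma_0}\subset\mc{C}^\eta_{X_0}$ (this is precisely Proposition~\ref{p:con->stri}, borrowed from \cite{D1999}, and your explicit construction with the fields $\d_kH_j\,e_l-\d_lH_j\,e_k$ plus the cut-off frame yields the stated $m=N(N+1)/2$ fields), invoke Theorem~\ref{t:stri-N} for this family, and then transfer striated regularity of the transported family back to conormal regularity of $\Sigma(t)=\psi_t(\Sigma_0)$ via the classical localization argument of \cite{G-SR} and \cite{D1999}. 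The paper simply delegates these two classical steps to citations, whereas you spell them out, but the route is the same.
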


\section{Tools} \label{s:tools}

In this section we will introduce the main tools we used to prove our results; they are mostly based on Fourier analysis techniques. 
Unless otherwise specified, one can find the proof of all the results quoted here in \cite{B-C-D}, chapter 2.

\subsection{Littlewood-Paley decomposition and Besov spaces}

Let us first define the so called ``Littlewood-Paley decomposition'', based on a non-homogeneous dyadic partition of unity with
respect to the Fourier variable.
So, fix a smooth radial function
$\chi$ supported in (say) the ball $B(0,\frac43),$ 
equal to $1$ in a neighborhood of $B(0,\frac34)$
and such that $r\mapsto\chi(r\,e)$ is nonincreasing
over $\R_+$ for all unitary vector $e\in\R^N$. Moreover, set
$\varphi\left(\xi\right)=\chi\left(\xi/2\right)-\chi\left(\xi\right).$
\smallbreak
The dyadic blocks $(\Delta_j)_{j\in\Z}$
 are defined by\footnote{Throughout we agree  that  $f(D)$ stands for 
the pseudo-differential operator $u\mapsto\mc{F}^{-1}(f\,\mc{F}u)$.} 
$$
\Delta_j:=0\ \hbox{ if }\ j\leq-2,\quad\Delta_{-1}:=\chi(D)\quad\hbox{and}\quad
\Delta_j:=\varphi(2^{-j}D)\ \text{ if }\  j\geq0.
$$
We  also introduce the following low frequency cut-off operator:
$$
S_ju:=\chi(2^{-j}D)=\sum_{k\leq j-1}\Delta_{k}\quad\text{for}\quad j\geq0.
$$
The following classical properties will be used freely throughout the paper:
\begin{itemize}
\item for any $u\in\mc{S}',$ the equality $u=\sum_{j}\Delta_ju$ holds true in $\mc{S}'$;
\item for all $u$ and $v$ in $\mc{S}'$,
the sequence $\left(S_{j-1}u\,\,\Delta_jv\right)_{j\in\N}$ is spectrally supported in dyadic annuli.
\end{itemize}
One can now define what a (non-homogeneous) Besov space $B^s_{p,r}$ is.
\begin{defin} \label{d:besov}
  Let  $u$ be a tempered distribution, $s$ a real number, and 
$1\leq p,r\leq+\infty.$ We define the space $B^s_{p,r}$ as the set of distributions $u\in\mc{S}'$ such  that
$$
\|u\|_{B^s_{p,r}}:=\bigg\|\bigl(2^{js}\,
\|\Delta_j  u\|_{L^p}\bigr)_{j\geq-1}\bigg\|_{\ell^r}\;<\;+\infty\,.
$$
\end{defin}
  
From the above definition, it is easy to show that for all $s\in\R$, the Besov space $B^s_{2,2}$ coincides with the non-homogeneous
Sobolev space $H^s$, while for all $s\in\,\R_+\!\!\setminus\!\N$, the space $B^s_{\infty,\infty}$ is actually the
H\"older space $\mc{C}^s$.

If $s\in\N$, instead, we set $\mc{C}^s_*:=B^s_{\infty,\infty}$, to distinguish it from the space $\mc{C}^s$ of
the differentiable functions with continuous partial derivatives up to the order $s$. Moreover, the strict inclusion
$\mc{C}^s_b\,\hra\,\mc{C}^s_*$ holds, where $\mc{C}^s_b$ denotes the subset of $\mc{C}^s$ functions bounded with all
their derivatives up to the order $s$.

If $s<0$, we define the ``negative H\"older space'' $\mc{C}^s$ as the Besov space $B^s_{\infty,\infty}$.

  Finally, let us also point out that for any $k\in\N$ and $p\in[1,+\infty]$, we have the following chain of continuous embeddings:
 $$
 B^k_{p,1}\hookrightarrow W^{k,p}\hookrightarrow B^k_{p,\infty}\,,
 $$
  where  $W^{k,p}$ denotes the set of $L^p$ functions
 with derivatives up to the order $k$ in $L^p$.
\medbreak

Besov spaces have many nice properties which will be recalled throughout the paper
whenever they are needed.
For the time being, let us just mention that if the conditions
$$
s\,>\,1\,+\,\frac{N}{p}\qquad\qquad\mbox{ or }\qquad\qquad s\,=\,1\,+\,\frac{N}{p}\quad\mbox{and}\quad r=1
$$
holds true,
then $B^s_{p,r}$ is an algebra continuously embedded in the set $\mc{C}^{0,1}$
of bounded Lipschitz functions, and that the gradient operator maps $B^s_{p,r}$ in $B^{s-1}_{p,r}$.

\medbreak
The following result will be also needed.
\begin{prop}\label{p:CZ}
Let $F:\R^N\rightarrow\R$
 be a smooth homogeneous function of degree $m$ away from a neighborhood of the origin.

 Then for all $(p,r)\in[1,+\infty]^2$ and all $s\in\R$,  the operator $F(D)$ maps $B^s_{p,r}$
 in $B^{s-m}_{p,r}$.
 \end{prop}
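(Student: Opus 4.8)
The plan is to decompose $F(D)u$ with the Littlewood--Paley blocks and to reduce the estimate, scale by scale, to a \emph{uniform} convolution bound; the whole difficulty is concentrated in one rescaling identity, after which all the endpoint cases in $(p,r)$ come for free. To begin with, by hypothesis $F$ is smooth and homogeneous of degree $m$ outside some ball $B(0,R_0)$; changing it inside $B(0,R_0)$ so as to make it smooth on the whole of $\R^N$ alters neither the assumptions nor the blocks $\Delta_jF(D)$ for large $j$, so we may and do assume $F\in\mc{C}^\infty(\R^N)$, and we write $F(\xi)=|\xi|^m\,G(\xi/|\xi|)$ for $|\xi|\geq R_0$, with $G:=F_{|S^{N-1}}\in\mc{C}^\infty(S^{N-1})$.

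Next comes the key observation. Choose $j_0\in\N$ so large that $\Supp\varphi(2^{-j}\cdot)\subset\{|\xi|>R_0\}$ for every $j\geq j_0$. Then, on that support, homogeneity gives, with $\eta=2^{-j}\xi$,
$$
\varphi(2^{-j}\xi)\,F(\xi)\,=\,\varphi(\eta)\,\big(2^j|\eta|\big)^m\,G(\eta/|\eta|)\,=\,2^{jm}\,g(2^{-j}\xi)\,,\qquad
g(\eta)\,:=\,\varphi(\eta)\,|\eta|^m\,G(\eta/|\eta|)\,,
$$
and the function $g$ belongs to $\mc{C}^\infty_c(\R^N)$ and is \emph{independent of $j$} (it is smooth because $\varphi$ is supported away from the origin). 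Hence $\Delta_jF(D)=2^{jm}\,g(2^{-j}D)$ for all $j\geq j_0$. Picking $\wtilde\varphi\in\mc{C}^\infty_c$ supported away from $0$ and equal to $1$ on a neighbourhood of $\Supp\varphi$, we have $g=g\,\wtilde\varphi$, hence $g(2^{-j}D)=g(2^{-j}D)\,\wtilde\varphi(2^{-j}D)$; moreover $g(2^{-j}D)$ is the convolution with $2^{jN}(\mc{F}^{-1}g)(2^j\,\cdot)$, whose $L^1$ norm equals $\|\mc{F}^{-1}g\|_{L^1}<\infty$ and does not depend on $j$. Since for $j\geq j_0$ the operator $\wtilde\varphi(2^{-j}D)$ only involves the blocks $\Delta_ku$ with $|k-j|\leq2$, Young's inequality yields, uniformly in $j\geq j_0$ and in $p\in[1,+\infty]$,
$$
\big\|\Delta_jF(D)u\big\|_{L^p}\;\leq\;C\,2^{jm}\sum_{|k-j|\leq2}\|\Delta_ku\|_{L^p}\,.
$$

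It remains to treat the finitely many low-frequency blocks $j\in\{-1,0,\dots,j_0-1\}$. For each of them the symbol $\varphi(2^{-j}\cdot)\,F$ (respectively $\chi\,F$ when $j=-1$) lies in $\mc{C}^\infty_c(\R^N)$, so $\Delta_jF(D)$ is convolution with a Schwartz function; and, by the support properties of $\chi$ and $\varphi$, it only sees finitely many blocks $\Delta_ku$ with $k$ in a bounded range, whence $\|\Delta_jF(D)u\|_{L^p}\leq C_{j_0}\|u\|_{B^s_{p,r}}$ for these $j$. Assembling everything: multiply the high-frequency estimate by $2^{j(s-m)}$, absorb the harmless factor $2^{(j-k)s}$ (with $|j-k|\leq2$) into the constant, take the $\ell^r(\Z)$-norm in $j$, and use Young's inequality for the convolution with a finitely supported sequence; this bounds the tail $j\geq j_0$ by $C\|u\|_{B^s_{p,r}}$, while the finitely many low-frequency terms were just estimated. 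Summing, $\|F(D)u\|_{B^{s-m}_{p,r}}\leq C\,\|u\|_{B^s_{p,r}}$, which is the assertion.

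I do not expect a real obstacle here: this is a textbook multiplier theorem and the argument is entirely routine once the rescaling identity $\Delta_jF(D)=2^{jm}g(2^{-j}D)$, with $g$ the same fixed function for all large $j$, is correctly set up. The only two points deserving a line of care are the (innocuous) smoothing of $F$ near the origin and the verification that $g$ does not depend on $j$; note in particular that the endpoints $p\in\{1,+\infty\}$ and $r\in\{1,+\infty\}$ cause no trouble precisely because each dyadic block is estimated \emph{separately} in $L^p$, no $L^p$-summation over the frequency index ever being performed.
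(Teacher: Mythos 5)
The paper does not prove this proposition itself but defers it (together with the other results of Section~\ref{s:tools}) to \cite{B-C-D}, Chapter~2; your argument is precisely the standard dyadic rescaling proof given there, with the identity $\Delta_jF(D)=2^{jm}g(2^{-j}D)$ for a fixed $g\in\mc{C}^\infty_c$ as the key step, and it is correct in all its cases, including the endpoints $p,r\in\{1,\infty\}$. (One harmless normalisation worth a line: writing $G:=F_{|S^{N-1}}$ implicitly assumes the homogeneity region contains the unit sphere; otherwise take $G(\omega):=R_1^{-m}F(R_1\omega)$ for some $R_1$ large enough, which changes nothing in the argument.)
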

 
  The following fundamental lemma describes, by the so-called \emph{Bernstein's inequalities},
the way derivatives act on spectrally localized functions.
  \begin{lemma} \label{l:bern}
Let  $0<r<R$.   A
constant $C$ exists so that, for any nonnegative integer $k$, any couple $(p,q)$ 
in $[1,+\infty]^2$ with  $1\leq p\leq q$ 
and any function $u\in L^p$,  we  have, for all $\lambda>0$,
$$
\displaylines{
{\rm supp}\, \widehat u \subset   B(0,\lambda R)
\;\Longrightarrow\;
\|\nabla^k u\|_{L^q} \leq
 C^{k+1}\lambda^{k+N\left(\frac{1}{p}-\frac{1}{q}\right)}\|u\|_{L^p}\,;\cr
{\rm supp}\, \widehat u \subset \{\xi\in\R^N\,/\, r\lambda\leq|\xi|\leq R\lambda\}
\;\Longrightarrow\; C^{-k-1}\lambda^k\|u\|_{L^p}
\leq\|\nabla^k u\|_{L^p}\leq C^{k+1}  \lambda^k\|u\|_{L^p}\,.
}$$
\end{lemma}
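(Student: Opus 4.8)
The plan is to reduce the whole lemma to the case $\lambda=1$ by scaling, and to treat the case $\lambda=1$ by representing $\nabla^k u$ (for the upper bounds) or $u$ itself (for the lower bound) as a convolution of $u$ (resp.\ of the $\d^\alpha u$) with an explicit Schwartz kernel, and then applying Young's convolution inequality. The only genuinely delicate point will be keeping track of the \emph{geometric} growth $C^{k+1}$ of the constant, rather than a factorial one. For the scaling reduction: given $u$ with $\Supp\what u\subset B(0,\lambda R)$ (resp.\ $\Supp\what u\subset\{r\lambda\leq|\xi|\leq R\lambda\}$) I set $v(x):=u(x/\lambda)$, so $\what v(\xi)=\lambda^N\what u(\lambda\xi)$ is supported in $B(0,R)$ (resp.\ in $\{r\leq|\xi|\leq R\}$), and since $\|\nabla^k v\|_{L^q}=\lambda^{N/q-k}\|\nabla^k u\|_{L^q}$ and $\|v\|_{L^p}=\lambda^{N/p}\|u\|_{L^p}$, the scale-$1$ estimates for $v$ translate into the stated ones for $u$ with exactly the claimed powers of $\lambda$. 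So from now on $\lambda=1$.

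For the first inequality I would fix once and for all a function $\theta\in\mc{C}^\infty_c(\R^N)$ with $\theta\equiv1$ on $B(0,R)$. Then $\what u=\theta\,\what u$, hence for every multi-index $\alpha$ one has $\what{\d^\alpha u}=(i\xi)^\alpha\theta\,\what u$, i.e.\ $\d^\alpha u=g_\alpha*u$ with $g_\alpha:=\mc{F}^{-1}\bigl((i\xi)^\alpha\theta\bigr)$. Choosing $a\in[1,+\infty]$ by $1+1/q=1/a+1/p$ — which is possible precisely because $p\leq q$ — Young's inequality gives $\|\d^\alpha u\|_{L^q}\leq\|g_\alpha\|_{L^a}\|u\|_{L^p}$, so it remains to show $\|g_\alpha\|_{L^a}\leq C^{k+1}$ for $|\alpha|=k$, with $C$ depending only on $N$, $R$ and $\theta$. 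Integrating by parts yields the pointwise bound $|g_\alpha(x)|\leq C_N\,(1+|x|^2)^{-N}\,\bigl\|(\Id-\Delta_\xi)^N\bigl((i\xi)^\alpha\theta\bigr)\bigr\|_{L^1}$; expanding $(\Id-\Delta_\xi)^N$ by Leibniz produces a number of terms bounded in terms of $N$ alone, in each of which at most $2N$ derivatives fall on $(i\xi)^\alpha$, and on $\Supp\theta$ each such term is bounded by a fixed power of $k$ times $R^k$ times a derivative of $\theta$; hence $\|g_\alpha\|_{L^a}\leq C_N'\,(\text{polynomial in }k)\,R^k\leq C^{k+1}$. Summing over the polynomially-many multi-indices of order $k$ gives $\|\nabla^k u\|_{L^q}\leq C^{k+1}\|u\|_{L^p}$; taking $p=q$ and noting that the annulus is contained in $B(0,R)$ gives at once the upper bound in the second inequality.

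The lower bound in the second inequality is the substantial step. Here I would exploit that on the annulus $\{r\leq|\xi|\leq R\}$ the function $\xi\mapsto|\xi|^{-2k}$ is smooth, which lets one invert powers of $\Delta$. Starting from the multinomial identity $|\xi|^{2k}=(-1)^k\sum_{|\alpha|=k}\binom{k}{\alpha}(i\xi)^\alpha(i\xi)^\alpha$, dividing by $|\xi|^{2k}$ and inserting a cut-off $\psi\in\mc{C}^\infty_c(\R^N\setminus\{0\})$ equal to $1$ on $\{r\leq|\xi|\leq R\}$, I obtain
$$
\what u(\xi)\;=\;(-1)^k\sum_{|\alpha|=k}\binom{k}{\alpha}\,\frac{(i\xi)^\alpha\,\psi(\xi)}{|\xi|^{2k}}\;\what{\d^\alpha u}(\xi)\,,
$$
which is valid because $\psi\equiv1$ on $\Supp\what u$, and the latter contains $\Supp\what{\d^\alpha u}$. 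Setting $h_\alpha:=\mc{F}^{-1}\bigl((i\xi)^\alpha\psi(\xi)/|\xi|^{2k}\bigr)$, this reads $u=(-1)^k\sum_{|\alpha|=k}\binom{k}{\alpha}\,h_\alpha*\d^\alpha u$, so Young's inequality gives $\|u\|_{L^p}\leq\bigl(\sum_{|\alpha|=k}\binom{k}{\alpha}\|h_\alpha\|_{L^1}\bigr)\sup_{|\alpha|=k}\|\d^\alpha u\|_{L^p}$. Exactly as above, integration by parts bounds $\|h_\alpha\|_{L^1}$ by $C_N\bigl\|(\Id-\Delta_\xi)^N\bigl((i\xi)^\alpha\psi/|\xi|^{2k}\bigr)\bigr\|_{L^1}$; the new feature is that derivatives now also hit $|\xi|^{-2k}$, but on $\Supp\psi$ each such derivative costs only a factor $\lesssim k/r$, while $|\xi|^{-2k}\leq r^{-2k}$ and $|(i\xi)^\alpha|\leq R^k$, so one still gets $\|h_\alpha\|_{L^1}\leq C^{k+1}$. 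Since $\sum_{|\alpha|=k}\binom{k}{\alpha}=N^k$, this yields $\|u\|_{L^p}\leq C^{k+1}\|\nabla^k u\|_{L^p}$, i.e.\ the desired $C^{-k-1}\|u\|_{L^p}\leq\|\nabla^k u\|_{L^p}$.

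The hard part is not conceptual but bookkeeping: producing the constant $C^{k+1}$ and not, say, $k!$. One has to check that each differentiation of $(i\xi)^\alpha$ (or of $|\xi|^{-2k}$) costs only a factor linear in $k$, that at most a fixed number $2N$ of such differentiations ever occur, and that a polynomial in $k$ times a $k$-th power of a constant is absorbed into $C^{k+1}$ after enlarging $C$. Everything else is the scaling reduction together with two applications of Young's inequality and the elementary Fourier-decay estimate for the kernels $g_\alpha$ and $h_\alpha$.
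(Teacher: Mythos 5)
Your proposal is correct, and it is essentially the canonical argument: the paper itself gives no proof of this lemma, deferring to \cite{B-C-D}, chapter 2, where the proof proceeds exactly as you do --- rescaling to $\lambda=1$, writing $\d^\alpha u$ as a convolution of $u$ with $\mc{F}^{-1}\bigl((i\xi)^\alpha\theta\bigr)$ and applying Young's inequality, and for the reverse bound inverting $|\xi|^{2k}$ on the annulus via the multinomial identity with a cut-off $\psi$. Your bookkeeping of the constant ($C^{k+1}$ rather than factorial growth) is also sound, since at most $2N$ derivatives ever fall on $(i\xi)^\alpha$ or $|\xi|^{-2k}$, so the loss is only polynomial in $k$ times a geometric factor.
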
   
  As an immediate consequence of the first Bernstein's inequality, one gets the following embedding result.
  \begin{coroll}\label{c:embed}
  The space $B^{s_1}_{p_1,r_1}$ is continuously embedded in the space $B^{s_2}_{p_2,r_2}$ for all indices satisfying
  $p_1\,\leq\,p_2$ and
  $$
  s_2\,<\,s_1-N\left(\frac{1}{p_1}-\frac{1}{p_2}\right)\qquad\hbox{or}\qquad
  s_2\,=\,s_1-N\left(\frac{1}{p_1}-\frac{1}{p_2}\right)\;\;\hbox{and }\;\;r_1\,\leq\,r_2\,. 
  $$
  \end{coroll}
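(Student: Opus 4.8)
The plan is to reduce the statement to a block-by-block application of the first Bernstein inequality of Lemma \ref{l:bern}, followed by an elementary summation in the dyadic index. First I would record the spectral localization of the blocks: for every $j\geq-1$ the Fourier transform of $\Delta_ju$ is supported in a ball $B(0,c\,2^j)$, with $c$ depending only on the choice of $\chi$ and $\vphi$ (one uses that $\chi$ is supported in $B(0,\tfrac43)$ and $\vphi$ in an annulus contained in $B(0,\tfrac83)$; in particular the low-frequency block $\Delta_{-1}=\chi(D)$ has its spectrum in a ball, so the first Bernstein inequality applies to it as well). Since $p_1\leq p_2$, that inequality with $k=0$ gives a constant $C$, independent of $j$ and $u$, such that
$$
\|\Delta_ju\|_{L^{p_2}}\;\leq\;C\,2^{jN\left(\frac{1}{p_1}-\frac{1}{p_2}\right)}\,\|\Delta_ju\|_{L^{p_1}}\qquad\text{for all }j\geq-1\,.
$$

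Next I would multiply by $2^{js_2}$ and set $\delta:=s_1-s_2-N\left(\tfrac{1}{p_1}-\tfrac{1}{p_2}\right)\geq0$ together with $a_j:=2^{js_1}\|\Delta_ju\|_{L^{p_1}}$, so that $(a_j)_{j\geq-1}\in\ell^{r_1}$ with $\|(a_j)\|_{\ell^{r_1}}=\|u\|_{B^{s_1}_{p_1,r_1}}$, and the previous line reads $2^{js_2}\|\Delta_ju\|_{L^{p_2}}\leq C\,2^{-j\delta}\,a_j$. Then I would split into the two cases of the statement. If $\delta>0$, I would use the elementary bound $\|(2^{-j\delta}a_j)_j\|_{\ell^{r_2}}\leq\|(2^{-j\delta})_{j\geq-1}\|_{\ell^{r_2}}\,\|(a_j)\|_{\ell^\infty}$, noting that $\sum_{j\geq-1}2^{-jr_2\delta}<+\infty$ (or $\sup_{j\geq-1}2^{-j\delta}<+\infty$ when $r_2=+\infty$) and that $\ell^{r_1}\hra\ell^\infty$; this yields $\|u\|_{B^{s_2}_{p_2,r_2}}\leq C'\,\|u\|_{B^{s_1}_{p_1,r_1}}$. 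If instead $\delta=0$ and $r_1\leq r_2$, there is no decaying weight, so I would simply invoke the elementary embedding $\ell^{r_1}\hra\ell^{r_2}$ to get $\|u\|_{B^{s_2}_{p_2,r_2}}\leq C\,\|(a_j)\|_{\ell^{r_2}}\leq C\,\|(a_j)\|_{\ell^{r_1}}=C\,\|u\|_{B^{s_1}_{p_1,r_1}}$.

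There is no genuine obstacle here — this is precisely why the result is stated as a corollary of Lemma \ref{l:bern}. The only points deserving a line of care are the $j$-uniformity of the Bernstein constant, which is exactly what the first inequality in Lemma \ref{l:bern} provides; the separate (but immediate) treatment of the block $\Delta_{-1}$; and, in the borderline regime $\delta=0$, the fact that one genuinely needs the hypothesis $r_1\leq r_2$, since no summable weight is available to absorb a loss in the exponent.
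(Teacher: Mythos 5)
Your proof is correct and takes exactly the approach the paper has in mind: the paper states the corollary as ``an immediate consequence of the first Bernstein's inequality'' without writing out the details, and your block-by-block application of Lemma \ref{l:bern} (with the low-frequency block $\Delta_{-1}$ handled separately, and the dyadic summation split according to whether the gap $\delta$ is positive or zero) is precisely that argument.
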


From now on we will focus on the particular case of H\"older spaces.

\subsection{Paradifferential calculus}

Let us now introduce Bony's decomposition of the product of two tempered distrubutions $u$ and $v$: we will define the paraproduct
operator and recall a few nonlinear estimates in H\"older spaces. Constructing the paraproduct operator relies on the observation
that, formally, the product $u\,v$, may be decomposed into 
\begin{equation}\label{eq:bony}
u\,v\,=\,T_uv\,+\,T_vu\,+\,R(u,v)\,,
\end{equation}
with 
$$
T_uv\,:=\,\sum_jS_{j-1}u\,\Delta_jv\qquad\hbox{ and }\qquad
R(u,v)\,:=\,\sum_j\sum_{|k-j|\leq1}\Delta_ju\,\Delta_{k}v\,.
$$
The above operator $T$ is called ``paraproduct'', whereas
$R$ is called ``remainder''.

\smallbreak
The paraproduct and remainder operators have many nice continuity properties. 
The following ones will be of constant use in this paper.
\begin{prop}\label{p:op}
For any $s\in\R$ and $t>0$, the paraproduct operator 
$T$ maps $L^\infty\times\mc{C}^s$ into $\mc{C}^s$
and  $\mc{C}^{-t}\times\mc{C}^s$ into $\mc{C}^{s-t}$, and the following estimates hold:
$$
\|T_uv\|_{\mc{C}^s}\,\leq\,C\,\|u\|_{L^\infty}\,\|\nabla v\|_{\mc{C}^{s-1}}\qquad\hbox{and}\qquad
\|T_uv\|_{\mc{C}^{s-t}}\,\leq\,C\,\|u\|_{\mc{C}^{-t}}\,\|\nabla v\|_{\mc{C}^{s-1}}\,.
$$

For any $s_1$ and $s_2$ in $\R$ such that  $s_1+s_2>0$,
the remainder operator $R$ maps
$\mc{C}^{s_1}\times\mc{C}^{s_2}$ into $\mc{C}^{s_1+s_2}$ continuously.
\end{prop}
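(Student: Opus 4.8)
The plan is to deduce the whole proposition from two standard spectral‑localisation lemmas of \cite{B-C-D}, chapter 2, which I would invoke as black boxes: \textbf{(i)} if a sequence $(u_j)_{j\geq-1}$ has $\mathrm{supp}\,\widehat{u_j}\subset\{c\,2^j\leq|\xi|\leq C\,2^j\}$ and $M:=\sup_j2^{j\sigma}\|u_j\|_{L^\infty}<\infty$, then $\sum_ju_j\in\mc{C}^\sigma$ for \emph{every} $\sigma\in\R$, with $\big\|\sum_ju_j\big\|_{\mc{C}^\sigma}\leq C\,M$; \textbf{(ii)} if instead $\mathrm{supp}\,\widehat{u_j}\subset B(0,C\,2^j)$, the same conclusion holds, but only \emph{provided} $\sigma>0$. (Both follow by applying $\Delta_q$ to the series: in case (i) only the blocks with $|j-q|=O(1)$ contribute, giving a finite sum bounded by $C\,M\,2^{-q\sigma}$ for any $\sigma$; in case (ii) all blocks with $j\geq q-O(1)$ contribute, giving a geometric series that sums to $C\,M\,2^{-q\sigma}$ only when $\sigma>0$ — which is precisely why the remainder will require $s_1+s_2>0$.) Besides these, I would use freely that $\Delta_j$ and $S_j$ are bounded on $L^\infty$ uniformly in $j$, and the Bernstein inequalities of Lemma \ref{l:bern}.

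For the paraproduct I would argue as follows. In $T_uv=\sum_jS_{j-1}u\,\Delta_jv$ the spectrum of $S_{j-1}u$ lies in a ball of radius of order $2^{j}$ and that of $\Delta_jv$ in an annulus of size of order $2^j$, so each term is spectrally supported in a dyadic annulus and fact (i) applies. The factor $\Delta_jv$ is controlled, via a Bernstein inequality and the definition of the $\mc{C}^{s-1}$ norm, by
$$
\|\Delta_jv\|_{L^\infty}\,\leq\,C\,2^{-j}\,\|\Delta_j\nabla v\|_{L^\infty}\,\leq\,C\,2^{-js}\,\|\nabla v\|_{\mc{C}^{s-1}}\,.
$$
For the first inequality of the proposition I bound $\|S_{j-1}u\|_{L^\infty}\leq C\|u\|_{L^\infty}$ and conclude with (i) taking $\sigma=s$. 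For the second I use $t>0$ to sum a geometric series,
$$
\|S_{j-1}u\|_{L^\infty}\,\leq\,\sum_{k\leq j-2}\|\Delta_ku\|_{L^\infty}\,\leq\,\|u\|_{\mc{C}^{-t}}\,\sum_{k\leq j-2}2^{kt}\,\leq\,C\,2^{jt}\,\|u\|_{\mc{C}^{-t}}\,,
$$
whence $\|S_{j-1}u\,\Delta_jv\|_{L^\infty}\leq C\,2^{-j(s-t)}\|u\|_{\mc{C}^{-t}}\|\nabla v\|_{\mc{C}^{s-1}}$, still with spectrum in a dyadic annulus, and (i) with $\sigma=s-t$ finishes it. Since $\|\nabla v\|_{\mc{C}^{s-1}}\leq C\|v\|_{\mc{C}^s}$ by Bernstein, the two mapping properties $L^\infty\times\mc{C}^s\to\mc{C}^s$ and $\mc{C}^{-t}\times\mc{C}^s\to\mc{C}^{s-t}$ follow.

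For the remainder I would reindex the double sum as $R(u,v)=\sum_\nu\big(\Delta_{\nu-1}u+\Delta_\nu u+\Delta_{\nu+1}u\big)\,\Delta_\nu v=:\sum_\nu w_\nu$, observing that now the spectrum of $w_\nu$ is contained only in a ball $B(0,C\,2^\nu)$, and estimate
$$
\|w_\nu\|_{L^\infty}\,\leq\,C\,\Big(\max_{|k-\nu|\leq1}\|\Delta_ku\|_{L^\infty}\Big)\,\|\Delta_\nu v\|_{L^\infty}\,\leq\,C\,2^{-\nu(s_1+s_2)}\,\|u\|_{\mc{C}^{s_1}}\,\|v\|_{\mc{C}^{s_2}}\,;
$$
then fact (ii) with $\sigma=s_1+s_2>0$ yields $R(u,v)\in\mc{C}^{s_1+s_2}$ with the stated bound. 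The only point that is not purely mechanical is the dichotomy between (i) and (ii): the paraproduct lives in annuli, so no sign condition on the exponent is needed and $s$ may be arbitrary, whereas the remainder lives in balls, and the overlapping low‑frequency content of the $w_\nu$'s is exactly what makes the hypothesis $s_1+s_2>0$ indispensable. Everything else is routine bookkeeping of dyadic sums.
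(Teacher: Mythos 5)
Your argument is correct and is essentially the standard proof of this proposition: the paper itself gives no proof but refers to \cite{B-C-D}, chapter 2, where exactly this dyadic-block argument (spectral localisation of $S_{j-1}u\,\Delta_jv$ in annuli and of the remainder terms in balls, combined with the two summation lemmas you invoke and the reverse Bernstein inequality) is carried out. The only bookkeeping point worth keeping in mind is that the reverse Bernstein step $\|\Delta_jv\|_{L^\infty}\leq C\,2^{-j}\|\Delta_j\nabla v\|_{L^\infty}$ requires annular spectral support, which is fine here because the paraproduct terms with $j\leq0$ vanish identically.
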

Combining the above proposition with Bony's decomposition \eqref{eq:bony}, 
we easily get the following ``tame estimate'':
\begin{coroll} \label{c:op}
Let $u$ be a bounded function such that $\nabla u\in \mc{C}^{s-1}$ for some $s>0$. 

Then for any $v\in\mc{C}^s$ we have $u\,v\,\in\,\mc{C}^s$
and there exists a constant $C$, depending only on $N$ and $s$,
such that 
$$
\|u\,v\|_{\mc{C}^s}\,\leq\, C\Bigl(\|u\|_{L^\infty}\,\|v\|_{\mc{C}^s}\,+\,\|v\|_{L^\infty}\,\|\nabla u\|_{\mc{C}^{s-1}}\Bigr)\,.
$$
\end{coroll}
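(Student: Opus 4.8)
The plan is to apply Bony's decomposition \eqref{eq:bony}, writing $uv\,=\,T_uv\,+\,T_vu\,+\,R(u,v)$, and to estimate each of the three pieces in $\mc{C}^s$ by means of proposition \ref{p:op}. Before doing so it is convenient to observe that the hypotheses on $u$ already place it in $B^s_{\infty,\infty}$: indeed $\|\Delta_{-1}u\|_{L^\infty}\leq C\|u\|_{L^\infty}$, while for $j\geq0$ Bernstein's inequalities (lemma \ref{l:bern}) give $\|\Delta_ju\|_{L^\infty}\leq C2^{-j}\|\Delta_j\nabla u\|_{L^\infty}$, so that $\|u\|_{B^s_{\infty,\infty}}\leq C(\|u\|_{L^\infty}+\|\nabla u\|_{\mc{C}^{s-1}})$; in particular all three terms of the decomposition are well defined.

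For the first paraproduct, since $u\in L^\infty$ and $v\in\mc{C}^s$, the first estimate of proposition \ref{p:op} yields $\|T_uv\|_{\mc{C}^s}\leq C\,\|u\|_{L^\infty}\,\|\nabla v\|_{\mc{C}^{s-1}}$; recalling that the gradient maps $B^s_{\infty,\infty}$ into $B^{s-1}_{\infty,\infty}$, this is bounded by $C\,\|u\|_{L^\infty}\,\|v\|_{\mc{C}^s}$. For the symmetric paraproduct $T_vu$ I would use the same estimate with the roles of the two functions exchanged: as $v\in L^\infty$ and $\nabla u\in\mc{C}^{s-1}$ by assumption, one gets directly $\|T_vu\|_{\mc{C}^s}\leq C\,\|v\|_{L^\infty}\,\|\nabla u\|_{\mc{C}^{s-1}}$, which is already of the desired form.

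The only point requiring a little care is the remainder term, and the key remark is that all the positive regularity needed to control it is carried by $v$. I would exploit the continuous embedding $L^\infty\hra\mc{C}^0_*=B^0_{\infty,\infty}$ (immediate since $\sup_j\|\Delta_ju\|_{L^\infty}\leq C\|u\|_{L^\infty}$): thus $u\in\mc{C}^0_*$ with norm controlled by $\|u\|_{L^\infty}$, and since $0+s=s>0$, the remainder part of proposition \ref{p:op}, applied with the exponents $s_1=0$ and $s_2=s$, gives $\|R(u,v)\|_{\mc{C}^s}\leq C\,\|u\|_{L^\infty}\,\|v\|_{\mc{C}^s}$. Summing the three bounds shows that $uv\in\mc{C}^s$ together with the claimed inequality, the constant $C$ depending only on $N$ and $s$. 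There is no real obstacle here: the estimate is a completely standard ``tame'' bound, and the only subtlety — the borderline exponent $s_1=0$ in the remainder — is harmless precisely because $s>0$.
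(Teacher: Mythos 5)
Your proof is correct and follows exactly the route the paper indicates: Bony's decomposition $uv = T_uv + T_vu + R(u,v)$ combined with Proposition \ref{p:op}, bounding $T_uv$ by $\|u\|_{L^\infty}\|v\|_{\mc{C}^s}$, $T_vu$ by $\|v\|_{L^\infty}\|\nabla u\|_{\mc{C}^{s-1}}$, and $R(u,v)$ via $L^\infty\hookrightarrow\mc{C}^0_*$ and the remainder estimate with $s_1=0,\ s_2=s>0$. The paper leaves this as an "easy" consequence; your filling-in of the three steps, including the correct handling of the borderline exponent $s_1=0$ in the remainder, is exactly what was intended.
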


In our computations we will often have to handle compositions between a paraproduct operator and a Fourier multiplier. The following
lemma (see the proof e.g. in \cite{D1997}) provides us with estimates for the commutator operator.
\begin{lemma} \label{l:comm}
 Let $m\in\R$, $R>0$ and $f\in\mc{C}^\infty(\R^N)$ be a homogeneous smooth function of degree $m$ out of the ball $B(0,R)$.

Then, there exists a constant $C$, depending only on $R$, such that, for all $s\in\R$ and all $\sigma<1$, one has:
\begin{equation} \label{est:comm}
 \left\|\left[T_u,f(D)\right]v\right\|_{\mc{C}^{s-m+\sigma}}\,\leq\,
\frac{C}{1-\sigma}\,\|\nabla u\|_{\mc{C}^{\sigma-1}}\,\|v\|_{\mc{C}^s}\,.
\end{equation}
\end{lemma}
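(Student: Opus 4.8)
The plan is to run the classical argument for commutators between a paraproduct and a Fourier multiplier (as in \cite{D1997}, \cite{B-C-D}). Using Bony's decomposition \eqref{eq:bony}, write
$$
[T_u,f(D)]v\;=\;\sum_jw_j\,,\qquad
w_j\,:=\,S_{j-1}u\;f(D)\Delta_jv\,-\,f(D)\bigl(S_{j-1}u\;\Delta_jv\bigr)\,.
$$
Since $f(D)$ does not enlarge spectra, since $\Delta_jv$ is localized in $2^j\,\mathrm{supp}\,\varphi$ and $S_{j-1}u$ in a ball of radius of order $2^j$, each $w_j$ is spectrally supported in a dyadic annulus $2^j\mc{C}$, with $\mc{C}$ a fixed annulus not meeting the origin. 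By the characterization of $\mc{C}^{s-m+\sigma}=B^{s-m+\sigma}_{\infty,\infty}$ through such annular spectral pieces (valid for every real exponent), it is therefore enough to establish the uniform estimate
$$
2^{j(s-m+\sigma)}\,\|w_j\|_{L^\infty}\;\leq\;\frac{C}{1-\sigma}\,\|\nabla u\|_{\mc{C}^{\sigma-1}}\,\|v\|_{\mc{C}^s}\,.
$$

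The next step is to turn $w_j$ into a genuine commutator with a compactly supported symbol, so as to use kernel bounds. Fix $\theta\in\mc{C}^\infty_c(\R^N)$, supported in an annulus and identically $1$ on $\mc{C}$, and set $\phi_j(\xi):=f(\xi)\,\theta(2^{-j}\xi)$. Because all the frequencies appearing in $w_j$ lie in $2^j\mc{C}$, one may replace $f(D)$ by $\phi_j(D)$ throughout, which gives $w_j=\pm[\phi_j(D),S_{j-1}u]\,\Delta_jv$. Writing $\phi_j(D)$ through its kernel $h_j=\mc{F}^{-1}\phi_j\in\mc{S}$ then yields the pointwise identity
$$
w_j(x)\;=\;\int_{\R^N}h_j(x-y)\,\bigl(S_{j-1}u(x)-S_{j-1}u(y)\bigr)\,\Delta_jv(y)\,dy\,,
$$
hence, by the mean value inequality,
$$
\|w_j\|_{L^\infty}\;\leq\;\|\nabla S_{j-1}u\|_{L^\infty}\;\|\Delta_jv\|_{L^\infty}\int_{\R^N}|z|\,|h_j(z)|\,dz\,.
$$

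It remains to bound the three factors. As soon as $j$ is large enough that $2^j\mc{C}$ lies outside $B(0,R)$, the homogeneity of degree $m$ gives $\phi_j(\xi)=2^{jm}g(2^{-j}\xi)$ for a fixed $g\in\mc{C}^\infty_c$, whence $h_j(x)=2^{j(m+N)}h_0(2^jx)$ with $h_0=\mc{F}^{-1}g\in\mc{S}$, and a change of variables gives $\int|z|\,|h_j(z)|\,dz=C\,2^{j(m-1)}$; the finitely many remaining low-frequency blocks contribute only a bounded number of terms, with constants controlled by $R$, $N$ and $m$. The definition of the H\"older norms gives $\|\Delta_jv\|_{L^\infty}\leq C\,2^{-js}\|v\|_{\mc{C}^s}$, and — this is where the hypothesis $\sigma<1$ enters decisively — summing the geometric series $\|\nabla S_{j-1}u\|_{L^\infty}\leq C\sum_{k\leq j-2}2^{k(1-\sigma)}\|\nabla u\|_{\mc{C}^{\sigma-1}}$ yields $\|\nabla S_{j-1}u\|_{L^\infty}\leq \frac{C}{1-\sigma}\,2^{j(1-\sigma)}\|\nabla u\|_{\mc{C}^{\sigma-1}}$. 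Multiplying the three bounds, the powers $2^{j(m-1)}$, $2^{-js}$ and $2^{j(1-\sigma)}$ combine to exactly cancel the weight $2^{j(s-m+\sigma)}$, which proves the uniform estimate above and, by the spectral summation of the first step, the lemma. The genuinely delicate point is the spectral bookkeeping in the second step — legitimizing the replacement of $f(D)$ by the rescaled compactly supported multiplier $\phi_j(D)$ and treating separately the finitely many blocks where $f$ need not be homogeneous; once that is settled, what remains is the scaling computation of $\int|z|\,|h_j(z)|\,dz$ and the summation producing the factor $1/(1-\sigma)$.
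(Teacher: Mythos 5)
Your proof is correct and follows essentially the same route as the source the paper relies on: the paper does not prove this lemma itself but refers to \cite{D1997}, and your argument -- spectral localization of each $S_{j-1}u\,\Delta_jv$ (hence of each commutator block) in dyadic annuli $2^j\mc{C}$, replacement of $f(D)$ by the truncated rescaled multiplier $\phi_j(D)$, the kernel/mean-value estimate yielding the factor $2^{j(m-1)}$ from $\int|z|\,|h_j(z)|\,dz$, and the geometric series on $\|\nabla S_{j-1}u\|_{L^\infty}$ producing the $C/(1-\sigma)$ -- is precisely that classical proof, with the delicate points (annular support so the Besov characterization holds for every real exponent, and the finitely many low-frequency blocks where $f$ is not homogeneous) correctly identified and handled. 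The only cosmetic remark is that your constant also depends on $f$ (through $\bigl\|\,|\cdot|\,\mc{F}^{-1}(f\,\theta)\bigr\|_{L^1}$ on the homogeneity region and on the low-frequency blocks), not on $R$ alone, but this looseness is already present in the statement as quoted and is harmless for its use in the paper.
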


Let us now quote another result (see \cite{D2010} for the proof of the former part, \cite{D1997} for the proof of the latter),
pertaining to the composition of functions in Besov spaces, which will be of great importance in the sequel.
\begin{prop}\label{p:comp}
\begin{itemize}
 \item [(i)] Let $I$ be an open  interval of $\R$  and $F:I\rightarrow\R$ a smooth function.
 
Then for all compact subset $J\subset I$ and all $s>0$, there exists a constant $C$
such that, for all function $u$ valued in $J$ and with gradient in $\mc{C}^{s-1}$,  we have
$\nabla(F\circ u)\in \mc{C}^{s-1}$ and 
$$
\|\nabla(F\circ u)\|_{\mc{C}^{s-1}}\,\leq\,C\,\|\nabla u\|_{\mc{C}^{s-1}}\,.
$$

 \item [(ii)] Let $s>0$ and $m\in\N$ be such that $m>s$. Let $u\in \mc{C}^s$ and $\psi\in\mc{C}^m_b$ such that
the Jacobian of $\psi^{-1}$ is bounded.

Then $u\circ\psi\,\in\,\mc{C}^s$. Moreover, if $s\in\,]0,1[\,$ the following estimate holds:
$$
 \left\|u\circ\psi\right\|_{\mc{C}^s}\,\leq\,C\,\left(1+\left\|\nabla\psi\right\|_{L^\infty}\right)\,\|u\|_{\mc{C}^s}\,.
$$
\end{itemize}
\end{prop}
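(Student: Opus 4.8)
\medbreak

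We sketch how one would prove Proposition \ref{p:comp}; both statements are classical substitution estimates, and the idea is to dispose of the low-regularity range directly and then to bootstrap.

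\emph{Part (i).} The reduction is $\nabla(F\circ u)=(F'\circ u)\,\nabla u$, which turns the claim into a product estimate with one factor obtained by composition. For $0<s\le1$ it is immediate from the elementary H\"older description of $\mc{C}^s$: $\|F\circ u\|_{L^\infty}\le\sup_J|F|$ and $|F(u(x))-F(u(y))|\le(\sup_J|F'|)\,|u(x)-u(y)|$ show $F\circ u\in\mc{C}^s$ with norm controlled by $\|u\|_{\mc{C}^s}$, and then $\nabla:\mc{C}^s\to\mc{C}^{s-1}$ concludes. For $s>1$ I would argue by induction on $\lceil s\rceil$: $F'\circ u$ is bounded, and by the inductive hypothesis applied to $F'$ one has $\nabla(F'\circ u)\in\mc{C}^{s-2}$ with a bound linear in $\|\nabla u\|_{\mc{C}^{s-2}}$; feeding this, together with Bony's decomposition $(F'\circ u)\nabla u=T_{F'\circ u}\nabla u+T_{\nabla u}(F'\circ u)+R(F'\circ u,\nabla u)$, into the paraproduct estimates of Proposition \ref{p:op} and the tame estimate of Corollary \ref{c:op} (here $\nabla u\in\mc{C}^{s-1}\hra L^\infty$ since $s-1>0$) places $(F'\circ u)\nabla u$ in $\mc{C}^{s-1}$ with the stated control --- the linear dependence on $\|\nabla u\|_{\mc{C}^{s-1}}$ coming from the fact that $\nabla u$ enters each term of the decomposition as a single factor. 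A purely frequency-space variant, which also handles $s\in\N$, is to write $F\circ u=F(S_0u)+\sum_{j\ge0}m_j\,\Delta_ju$ with $m_j:=\int_0^1F'(S_ju+\tau\Delta_ju)\,d\tau$ bounded by $\sup|F'|$, note that each summand is spectrally supported in a ball of radius $\sim2^j$ with $L^\infty$ norm $\lesssim2^{-js}\|u\|_{\mc{C}^s}$, and sum.

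\emph{Part (ii).} For $0<s<1$ I would again invoke the H\"older description: $\|u\circ\psi\|_{L^\infty}\le\|u\|_{L^\infty}$ and $|u(\psi(x))-u(\psi(y))|\le\|u\|_{\mc{C}^s}\,\|\nabla\psi\|_{L^\infty}^s\,|x-y|^s$, giving $\|u\circ\psi\|_{\mc{C}^s}\le C\bigl(1+\|\nabla\psi\|_{L^\infty}^s\bigr)\|u\|_{\mc{C}^s}\le C\bigl(1+\|\nabla\psi\|_{L^\infty}\bigr)\|u\|_{\mc{C}^s}$, which is exactly the asserted estimate. For general $s>0$ (only the membership $u\circ\psi\in\mc{C}^s$) I would decompose $u=\sum_{j\ge-1}\Delta_ju$ and set $v_j:=(\Delta_ju)\circ\psi$: each $v_j$ is smooth, and by Bernstein's inequalities (Lemma \ref{l:bern}), the chain rule, and the hypotheses $\psi\in\mc{C}^m_b$ and $\psi^{-1}$ of bounded Jacobian, one gets $\|v_j\|_{L^\infty}\le\|\Delta_ju\|_{L^\infty}\lesssim2^{-js}\|u\|_{\mc{C}^s}$ and $\|\nabla^kv_j\|_{L^\infty}\lesssim2^{j(k-s)}\|u\|_{\mc{C}^s}$ for every integer $0\le k\le m$. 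Since the $v_j$ are no longer frequency-localized, I would reconstruct the regularity of $u\circ\psi=\sum_jv_j$ block by block: estimating $\Delta_\ell(u\circ\psi)=\sum_j\Delta_\ell v_j$ by $\|v_j\|_{L^\infty}$ for $j\ge\ell$ and by $2^{-\ell k_0}\|\nabla^{k_0}v_j\|_{L^\infty}$ for $j<\ell$, with $k_0$ the smallest integer $>s$ (so $k_0\le m$ because $m>s$), and summing the two geometric series, one reaches $\|\Delta_\ell(u\circ\psi)\|_{L^\infty}\lesssim2^{-\ell s}\|u\|_{\mc{C}^s}$, i.e. $u\circ\psi\in B^s_{\infty,\infty}$.

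\emph{Main difficulty.} The H\"older bounds and the Bernstein bookkeeping are routine; the genuine points are two. In (i), when $s<1$ the gradient $\nabla u$ lives in the \emph{negative}-regularity space $\mc{C}^{s-1}$, so it cannot simply be multiplied by the bounded function $F'\circ u$ (multiplication by $L^\infty$ functions does not preserve negative H\"older classes); one is forced to establish that $F'\circ u$ carries the \emph{same} positive regularity $\mc{C}^s$ as $u$ and to route the argument through Bony's decomposition, the tame estimate, and the induction on $\lceil s\rceil$, keeping track of the linear dependence on $\|\nabla u\|_{\mc{C}^{s-1}}$. In (ii), the substituted blocks $(\Delta_ju)\circ\psi$ are no longer frequency-localized, so the Littlewood-Paley series has to be reassembled after re-localizing with $\Delta_\ell$ and separating high from low relative frequencies --- and this is precisely where the hypothesis $m>s$ (enough derivatives of $\psi$) is used.
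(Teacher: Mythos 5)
The paper does not prove Proposition~\ref{p:comp}: it is quoted from \cite{D2010} (part (i)) and \cite{D1997} (part (ii)), so you are supplying an independent argument rather than reproducing one from the text. Your overall plan is sound and matches the standard route in those references: direct H\"older estimates in the regime $0<s\le1$, a Littlewood--Paley/Meyer-type telescoping for general $s$ in (i), and a block-by-block substitution followed by relocalization for (ii). Part (ii) in particular is complete and correct --- the choice $k_0=\lceil s\rceil$, which requires $m>s$, and the two geometric sums are exactly the standard bookkeeping, and your H\"older argument for $s\in\,]0,1[$ gives the stated constant $C(1+\|\nabla\psi\|_{L^\infty})$.

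The gap is in part (i), $0<s\le1$. Your chain is $\|\nabla(F\circ u)\|_{\mc C^{s-1}}\le C\|F\circ u\|_{\mc C^s}\le C\|u\|_{\mc C^s}$. But $\|u\|_{\mc C^s}\sim\|u\|_{L^\infty}+\|\nabla u\|_{\mc C^{s-1}}$: the low-frequency block $\|\Delta_{-1}u\|_{L^\infty}$ survives in $\|u\|_{\mc C^s}$ and is \emph{not} controlled by $\|\nabla u\|_{\mc C^{s-1}}$ (take $u$ constant). Since $u$ is valued in $J$, your argument yields $\|\nabla(F\circ u)\|_{\mc C^{s-1}}\le C_J\bigl(1+\|\nabla u\|_{\mc C^{s-1}}\bigr)$, with an additive constant --- not the claimed multiplicative estimate. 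To obtain the latter one must estimate the blocks of $\nabla(F\circ u)$ directly: writing $\Delta_j\nabla(F\circ u)(x)=\int\nabla\tilde\phi_j(y)\bigl[F(u(x-y))-F(u(x))\bigr]\,dy$ (using that $\nabla\tilde\phi_j$ has vanishing mean even for $j=-1$), and then bounding $|u(x-y)-u(x)|$ by $C|y|^s\|\nabla u\|_{\mc C^{s-1}}$ for $|y|\le1$ and by $C(1+|y|)\|\nabla u\|_{\mc C^{s-1}}$ for $|y|>1$ --- the first-difference bound itself follows from the Littlewood--Paley decomposition of $u$, using Bernstein on each block and splitting at $2^k|y|\sim1$. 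Integrating against the Schwartz kernel $\nabla\tilde\phi_j$ kills the extra $\|u\|_{L^\infty}$ term. The ``frequency-space variant'' you mention has the same shape of issue and also a small imprecision: the summands $m_j\Delta_j u$ are not spectrally supported in a ball (the correct statement is $\|\nabla^k m_j\|_{L^\infty}\lesssim2^{jk}$, which is what allows one to treat $m_j\Delta_ju$ ``as if'' localized at scale $2^j$). Finally, in the induction for $s>1$ the tame estimate of Corollary~\ref{c:op} produces the term $\|\nabla u\|_{L^\infty}\|\nabla(F'\circ u)\|_{\mc C^{s-2}}$, which is not manifestly linear in $\|\nabla u\|_{\mc C^{s-1}}$; the claimed linear dependence is not a direct consequence of the argument as written and would need a separate interpolation step using the $L^\infty$ bound on $u$.
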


Finally, let us introduce the notion of paravector-field.
\begin{defin} \label{d:pvec-f}
 Let $X$ be a vector-field with coefficients in $\mc{S}'$. We can formally define the paravector-field operator $T_X$ in the
following way: for all $u\in\mc{S}'$,
$$
T_Xu\,:=\,\sum_{i=1}^N\,T_{X^i}\d_iu\,.
$$
\end{defin}
The following result (see \cite{D1999}, section 2 for the proof) says that the paravector-field operator is, in a certain sense, the
principal part of the derivation $\d_X$.
\begin{lemma} \label{l:T_X}
 For all vector field $X\in\mc{C}^s$ and all $u\in\mc{C}^t$, we have:
\begin{itemize}
 \item if $\,t<1$ and $s+t>1$, then
$$
\left\|\d_Xu\,-\,T_Xu\right\|_{\mc{C}^{s+t-1}}\,\leq\,\frac{C}{(1-t)\,(s+t-1)}\,\|X\|_{\mc{C}^s}\,\|\nabla u\|_{\mc{C}^{t-1}}\,;
$$
\item if $\,t<0$, $s<1$ and $s+t>0$, then
$$
\left\|T_Xu\,-\,\div\left(u\,X\right)\right\|_{\mc{C}^{s+t-1}}\,\leq\,\frac{C}{t\,(s+t)\,(s-1)}\,\|X\|_{\mc{C}^s}\,\|u\|_{\mc{C}^t}\,;
$$
\item if $\,t<1$ and $s+t>0$, then
$$
\left\|\d_Xu\,-\,T_Xu\right\|_{\mc{C}^{s+t-1}}\,\leq\,\frac{C}{(s+t)\,(1-t)}\,\wtilde{\|}X\|_{\mc{C}^s}\,\|\nabla u\|_{\mc{C}^{t-1}}\,.
$$
\end{itemize}
Moreover, first and last inequalities are still true even in the case $\,t=1$, provided that one replaces $\|\nabla u\|_{\mc{C}^0_*}$
with $\|\nabla u\|_{L^\infty}$, while the second is still true even if $\,t=0$, with $\|u\|_{L^\infty}$ instead of $\|u\|_{\mc{C}^0_*}$.
\end{lemma}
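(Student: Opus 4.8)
The plan is to reduce everything to Bony's decomposition~\eqref{eq:bony} of the products $X^i\,\d_iu$ (and, for the last two inequalities, of $u\,X^i$), and then to rearrange the resulting pieces so that exactly the terms demanding more regularity than is available cancel out. Beyond~\eqref{eq:bony} and Proposition~\ref{p:op}, the only real ingredient is a careful tracking of the constants: one re-proves the paraproduct and remainder bounds keeping the dyadic sums, using that for $\sigma<0$ one has $\sum_{k\leq j}2^{-k\sigma}\leq\frac{C}{-\sigma}\,2^{-j\sigma}$ (so that $\|T_ab\|_{\mc{C}^{\sigma+\theta}}\leq\frac{C}{-\sigma}\|a\|_{\mc{C}^\sigma}\|b\|_{\mc{C}^\theta}$), while for $a\in L^\infty$ simply $\|T_ab\|_{\mc{C}^\theta}\leq C\|a\|_{L^\infty}\|b\|_{\mc{C}^\theta}$ with no loss, and that for $\sigma_1+\sigma_2>0$ one has $\|R(a,b)\|_{\mc{C}^{\sigma_1+\sigma_2}}\leq\frac{C}{\sigma_1+\sigma_2}\|a\|_{\mc{C}^{\sigma_1}}\|b\|_{\mc{C}^{\sigma_2}}$. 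I will use freely that $\d_i$ maps $\mc{C}^\sigma$ into $\mc{C}^{\sigma-1}$ and that $\|\nabla u\|_{\mc{C}^{t-1}}\leq C\|u\|_{\mc{C}^t}$ for $t<1$ (with $\|\nabla u\|_{\mc{C}^{-1}}\leq C\|u\|_{L^\infty}$), treating the precise norm on $u$ appearing on the right-hand sides up to these standard equivalences.

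For the \emph{first inequality}, summing~\eqref{eq:bony} over $i$ and using Definition~\ref{d:pvec-f} gives
$$
\d_Xu-T_Xu\,=\,\sum_{i=1}^N\Bigl(T_{\d_iu}X^i\,+\,R(X^i,\d_iu)\Bigr)\,.
$$
Since $t<1$, $\d_iu$ is of negative order $t-1$, so the first bound above yields $\|T_{\d_iu}X^i\|_{\mc{C}^{s+t-1}}\leq\frac{C}{1-t}\|\nabla u\|_{\mc{C}^{t-1}}\|X\|_{\mc{C}^s}$, while the assumption $s+t>1$ makes $R$ act from $\mc{C}^s\times\mc{C}^{t-1}$ into $\mc{C}^{s+t-1}$ with constant $\frac{C}{s+t-1}$; adding and using $\frac1{1-t}+\frac1{s+t-1}=\frac{s}{(1-t)(s+t-1)}$ gives the claim. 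For the \emph{third inequality}, $s+(t-1)$ may fail to be positive, so $R(X^i,\d_iu)$ is no longer directly controlled; here I replace it via the identity $\sum_iR(X^i,\d_iu)=\div W-R(\div X,u)$ with $W^i:=R(X^i,u)$, which follows from $\d_iR(X^i,u)=R(\d_iX^i,u)+R(X^i,\d_iu)$ upon summation. Both $R(X^i,u)$ and $R(\div X,u)$ now lie in $\mc{C}^{s+t}$ with constant $\frac{C}{s+t}$ — the first because $s+t>0$, the second because $\wtilde{\|}X\|_{\mc{C}^s}$ controls $\|\div X\|_{\mc{C}^s}$ — so, after applying $\d_i$, the whole remainder part is bounded in $\mc{C}^{s+t-1}$ by $\frac{C}{s+t}\,\wtilde{\|}X\|_{\mc{C}^s}\|\nabla u\|_{\mc{C}^{t-1}}$; together with the bound on $\sum_iT_{\d_iu}X^i$ from the previous step this yields the estimate with constant $\frac{C}{(1-t)(s+t)}$.

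For the \emph{second inequality}, expanding $u\,X^i$ via~\eqref{eq:bony}, differentiating and using $\d_i(T_{X^i}u)=T_{\d_iX^i}u+T_{X^i}\d_iu$ (so that $\sum_iT_{X^i}\d_iu=T_Xu$ cancels), one obtains
$$
T_Xu-\div(u\,X)\,=\,-\,T_{\div X}u\,-\,\sum_{i=1}^N\d_i\bigl(T_uX^i\bigr)\,-\,\sum_{i=1}^N\d_iR(u,X^i)\,.
$$
Here $\d_iR(u,X^i)\in\mc{C}^{s+t-1}$ with constant $\frac{C}{s+t}$ (as $s+t>0$); $T_{\div X}u\in\mc{C}^{(s-1)+t}$ with constant $\frac{C}{1-s}$ (as $\div X\in\mc{C}^{s-1}$ with $s-1<0$); and $\d_i(T_uX^i)=T_{\d_iu}X^i+T_u\d_iX^i$, where $T_{\d_iu}X^i$ is handled as before (the factor $\frac1{1-t}$ being harmless since $t<0$) while $T_u\d_iX^i$ costs $\frac{C}{-t}$ because $u\in\mc{C}^t$ with $t<0$; collecting the contributions gives the bound with constant $\frac{C}{(-t)(s+t)(1-s)}$, i.e.\ $\frac{C}{t\,(s+t)\,(s-1)}$ up to sign. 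The endpoint statements come out the same way: for $t=1$ one has $\d_iu\in L^\infty$ and $\|T_{\d_iu}X^i\|_{\mc{C}^s}\leq C\|\nabla u\|_{L^\infty}\|X\|_{\mc{C}^s}$ with no summation loss, so only the factor $\frac1{s+t-1}$ survives; for $t=0$ one has $u\in L^\infty$, which makes the $S_{j-1}u$ sums lossless, so the factor $\frac1{-t}$ disappears and $\|u\|_{L^\infty}$ replaces $\|u\|_{\mc{C}^0_*}$.

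I expect the main difficulty to be not the decompositions themselves — they are short — but the bookkeeping of the singular constants: Proposition~\ref{p:op} hides them, so one must re-derive the paraproduct and remainder bounds keeping the geometric sums $\sum_{k\leq j}2^{-k\sigma}\sim\frac{C}{-\sigma}2^{-j\sigma}$ for $\sigma<0$ and $\sum_{k\geq j}2^{-k\sigma}\sim\frac{C}{\sigma}2^{-j\sigma}$ for $\sigma>0$, and then check, in each of the three regimes separately, that it is precisely the terms requiring unavailable regularity which cancel after the rearrangement. A secondary subtlety is the low-frequency accounting that decides whether $\|u\|_{\mc{C}^t}$, $\|\nabla u\|_{\mc{C}^{t-1}}$ or $\|u\|_{L^\infty}$ is the natural quantity on the right, but this is routine.
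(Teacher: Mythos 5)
The paper itself does not prove this lemma: it cites \cite{D1999}, section~2, for the proof, so there is no in-paper argument to compare against. That said, your proof is correct and is the standard argument one would expect (and, given Danchin's style in that reference, almost certainly the same as his): Bony's decomposition, the commutation identities $\d_i(T_a b)=T_{\d_i a}b+T_a\d_ib$ and $\d_iR(a,b)=R(\d_ia,b)+R(a,\d_ib)$, and explicit bookkeeping of the geometric-series constants in the paraproduct and remainder bounds. All three algebraic rearrangements are correct, including the crucial one for the third inequality, where replacing $\sum_iR(X^i,\d_iu)$ by $\div\bigl(R(X^\cdot,u)\bigr)-R(\div X,u)$ trades the unavailable condition $s+t>1$ for $s+t>0$ at the price of the $\div X$ term, exactly the mechanism behind the $\wtilde{\|}X\|_{\mc{C}^s}$ on the right. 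The endpoint cases $t=1$ and $t=0$ are treated correctly.

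One point you flag but should not understate: in the third case, each piece of the rewritten remainder is naturally bounded by $\|u\|_{\mc{C}^t}$, not by $\|\nabla u\|_{\mc{C}^{t-1}}$, and these are \emph{not} comparable (the reverse inequality fails on low frequencies). The clean fix is to note that both sides of the identity $\d_Xu-T_Xu=\sum_i\bigl(T_{\d_iu}X^i+R(X^i,\d_iu)\bigr)$ depend on $u$ only through $\nabla u$, so one may subtract the constant $\Delta_{-1}u$ contribution at the outset: writing $u=\Delta_{-1}u+\wtilde u$, the $\wtilde u$ part satisfies $\|\wtilde u\|_{\mc{C}^t}\lesssim\|\nabla u\|_{\mc{C}^{t-1}}$, and the $\Delta_{-1}u$ part produces, after the rearrangement, the smooth finite-rank object $(\Delta_{-1}X+\Delta_0X)\cdot\nabla\Delta_{-1}u$, which is trivially controlled by $\|X\|_{L^\infty}\|\nabla u\|_{\mc{C}^{t-1}}$. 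Once this is made explicit, your argument is complete. Finally, a cosmetic remark: the constant $\frac1{1-t}+\frac1{s+t-1}=\frac{s}{(1-t)(s+t-1)}$ matches the lemma's $\frac{C}{(1-t)(s+t-1)}$ only because in this paper $s=\veps\in\,]0,1[$, so the extra factor of $s$ is harmlessly absorbed into $C$; this is worth saying so the bound doesn't look misquoted for general $s$.
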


We will heavily use also the following statement about composition of paravector-field and paraproduct operators
(see the appendix in \cite{D1999} for its proof).
\begin{lemma} \label{l:pvec-pprod}
 Fix $s\in\,]0,1[$. There exist constants $C$, depending only on $s$, such that, for all $t_1<0$ and $t_2\in\R$,
\begin{eqnarray*}
 \left\|T_X\,T_u\,v\right\|_{\mc{C}^{s-1+t_1+t_2}} & \leq & C\bigl(\|X\|_{\mc{C}^s}\,\|u\|_{\mc{C}^{t_1}}\,
\|v\|_{\mc{C}^{t_2}}\,+ \\
& & \qquad+\,\|v\|_{\mc{C}^{t_2}}\,\|T_Xu\|_{\mc{C}^{s-1+t_1}}\,+\,\|u\|_{\mc{C}^{t_1}}\,\|T_Xv\|_{\mc{C}^{s-1+t_2}}\bigr)\,,
\end{eqnarray*}
and this is still true in the case $t_1=0$ with $\|u\|_{L^\infty}$ instead of $\|u\|_{\mc{C}^0_*}$.

Moreover, if $\,s-1+t_1+t_2\,>\,0$, then we have also
\begin{eqnarray*}
 \left\|T_X\,R(u,v)\right\|_{\mc{C}^{s-1+t_1+t_2}} & \leq & C\bigl(\|X\|_{\mc{C}^s}\,\|u\|_{\mc{C}^{t_1}}\,
\|v\|_{\mc{C}^{t_2}}\,+ \\
& & \qquad+\,\|v\|_{\mc{C}^{t_2}}\,\|T_Xu\|_{\mc{C}^{s-1+t_1}}\,+\,\|u\|_{\mc{C}^{t_1}}\,\|T_Xv\|_{\mc{C}^{s-1+t_2}}\bigr)\,.
\end{eqnarray*}
\end{lemma}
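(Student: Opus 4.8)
The plan is to reduce the lemma to two ``paradifferential Leibniz rules'' for the paravector-field $T_X$:
$$
T_X\bigl(T_uv\bigr)\,=\,T_{T_Xu}v\,+\,T_u\,T_Xv\,+\,\mc{R}_1\,,\qquad\qquad
T_X\,R(u,v)\,=\,R\bigl(T_Xu,v\bigr)\,+\,R\bigl(u,T_Xv\bigr)\,+\,\mc{R}_2\,,
$$
where $\|\mc{R}_j\|_{\mc{C}^{s-1+t_1+t_2}}\leq C\,\|X\|_{\mc{C}^s}\,\|u\|_{\mc{C}^{t_1}}\,\|v\|_{\mc{C}^{t_2}}$, the bound on $\mc{R}_2$ being claimed only for $s-1+t_1+t_2>0$. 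Granting these identities, the statement follows at once from Proposition~\ref{p:op}: since $t_1\leq0$ and $s<1$ force $s-1+t_1<0$, its second paraproduct estimate yields $\|T_{T_Xu}v\|_{\mc{C}^{s-1+t_1+t_2}}\leq C\,\|T_Xu\|_{\mc{C}^{s-1+t_1}}\,\|v\|_{\mc{C}^{t_2}}$ and $\|T_u\,T_Xv\|_{\mc{C}^{s-1+t_1+t_2}}\leq C\,\|u\|_{\mc{C}^{t_1}}\,\|T_Xv\|_{\mc{C}^{s-1+t_2}}$ (with $\|u\|_{L^\infty}$ in place of $\|u\|_{\mc{C}^{0}_*}$ when $t_1=0$); while, when $s-1+t_1+t_2>0$, the remainder part of Proposition~\ref{p:op} gives $R(T_Xu,v)\in\mc{C}^{(s-1+t_1)+t_2}$ and $R(u,T_Xv)\in\mc{C}^{t_1+(s-1+t_2)}$ with the same products of norms. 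Summing the three contributions reproduces exactly the two right-hand sides in the statement.

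For the Leibniz rules themselves, I would use that each $\d_i$ commutes with all the dyadic blocks and the low-frequency cut-offs, so that $\d_i(T_uv)=T_{\d_iu}v+T_u\d_iv$ and $\d_iR(u,v)=R(\d_iu,v)+R(u,\d_iv)$. Plugging this into $T_X=\sum_iT_{X^i}\d_i$ and using the linearity of $T$ and $R$ in each argument (in particular $T_{T_Xu}v=\sum_iT_{T_{X^i}\d_iu}v$ and $R(T_Xu,v)=\sum_iR(T_{X^i}\d_iu,v)$) one obtains
$$
\mc{R}_1\,=\,\sum_{i=1}^N\Bigl(T_{X^i}T_{\d_iu}\,-\,T_{T_{X^i}\d_iu}\Bigr)v\;+\;\sum_{i=1}^N\bigl[T_{X^i},T_u\bigr]\d_iv\,,
$$
and similarly $\mc{R}_2$ decomposes as a finite sum of ``associativity defects'' $T_{X^i}R(\d_iu,v)-R(T_{X^i}\d_iu,v)$ and $T_{X^i}R(u,\d_iv)-R(u,T_{X^i}\d_iv)$, each understood as a single object. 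Every such summand is a commutator between $T_{X^i}$ and a bilinear paradifferential operation; to estimate it one expands the operators into their defining dyadic series, invokes the spectral localisation of the paraproduct (the product $S_{k-1}a\,\Delta_kb$ being supported in a dyadic annulus of size $\sim2^k$, by the support conditions imposed on $\chi$) and of the remainder ($\Delta_kb\,\widetilde{\Delta}_kc$ supported in a ball of size $\sim2^k$) to collapse the double sums to near-diagonal terms, and then commutes the frequency projectors past the slowly varying factors.

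The decisive gain is that a commutator of the type $[\Delta_j,S_{j-1}X^i]$ acting on a function at frequency $\sim2^j$ produces, via the Bernstein inequalities of Lemma~\ref{l:bern}, a factor $2^{-js}\,\|X^i\|_{\mc{C}^s}$ — an extra decay of order $s$, not capped at $1$ precisely because $s<1$. This is exactly what turns the naive indices $(t_1-1)+t_2$ of $T_{X^i}T_{\d_iu}v$ and $t_1+(t_2-1)$ of $[T_{X^i},T_u]\d_iv$ into the target $s-1+t_1+t_2$. Summing in $i$ and over the dyadic scales, using $\|S_{j-1}u\|_{L^\infty}\leq C\,2^{-jt_1}\|u\|_{\mc{C}^{t_1}}$ for $t_1<0$ (respectively $\leq C\,\|u\|_{L^\infty}$ for $t_1=0$, which is why the endpoint statement is with the $L^\infty$ norm) then gives the bound on $\mc{R}_1$; for $\mc{R}_2$ the scheme is the same, but the ``high--high'' interactions only sum under the hypothesis $s-1+t_1+t_2>0$, which is where that restriction enters.

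The main obstacle is the bookkeeping in this last step, and particularly the remainder case: a high--high product is not annihilated by $T_X$ acting on a single factor, so one must localise the defect $T_{X^i}R(\d_iu,v)-R(T_{X^i}\d_iu,v)$ carefully — rewriting both terms as $\sum_l$-series in a common form — so that the cancellation between the two pieces is visible scale by scale, and then verify absolute convergence of what is left (again using $s-1+t_1+t_2>0$). Keeping the constant in the form $C=C(s)$ and handling the endpoint $t_1=0$ uniformly are the remaining, more routine, points.
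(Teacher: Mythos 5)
Your overall plan is reasonable and reproduces the statement's structure: the decomposition $T_X(T_uv)=T_{T_Xu}v+T_u\,T_Xv+\mc{R}_1$ (and its $R$-analogue), followed by Proposition~\ref{p:op} on the two main terms, is exactly what one would try, and your verification that $T_{T_Xu}v\in\mc{C}^{(s-1+t_1)+t_2}$ and $T_u\,T_Xv\in\mc{C}^{t_1+(s-1+t_2)}$ is correct. Note that the paper itself does not prove this lemma -- it delegates to the appendix of~\cite{D1999} -- so there is no paper proof to compare against line-by-line; but your strategy is the natural paradifferential one.

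The real difficulty, however, is the bound on $\mc{R}_1$, and this is where your sketch is not sound as written. You write $\mc{R}_1=\sum_i\bigl(T_{X^i}T_{\d_iu}-T_{T_{X^i}\d_iu}\bigr)v+\sum_i[T_{X^i},T_u]\d_iv$ and claim each piece is controlled by $\|X\|_{\mc{C}^s}\|u\|_{\mc{C}^{t_1}}\|v\|_{\mc{C}^{t_2}}$, attributing the gain of $s$ to commutators of the type $[\Delta_j,S_{j-1}X^i]$. The first piece indeed behaves that way. But in $[T_{X^i},T_u]\d_iv$ you meet the \emph{dual} commutator $[\Delta_j,S_{j-1}u]$ and, equivalently, frequency-shift differences $(S_{q-1}-S_{j-1})u$ with $|q-j|$ bounded. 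For $u\in\mc{C}^{t_1}$ with $t_1\leq 0$ these give a factor $2^{-jt_1}\|u\|_{\mc{C}^{t_1}}$, which is the \emph{same} order as $\|S_{j-1}u\|_{L^\infty}$ -- there is no extra decay, let alone a factor $2^{-js}$. After putting in $\Delta_j\d_iv\sim 2^{-j(t_2-1)}$, this part of $\mc{R}_1$ is only in $\mc{C}^{t_1+t_2-1}$, which is strictly larger than the target $\mc{C}^{s-1+t_1+t_2}$, and your Bernstein argument gives no way to close the gap. In other words, not every piece of the Leibniz defect is controllable by $\|X\|\|u\|\|v\|$ alone; the ``bad'' contributions carry the combination $\sum_iS_{j-1}X^i\,\Delta_j\d_iv$, i.e.\ a dyadic piece of $T_Xv$, and they must be reorganized so that what appears is the norm $\|T_Xv\|_{\mc{C}^{s-1+t_2}}$, not the naive product of norms. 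That step is the heart of the lemma, it explains \emph{why} the statement carries the $\|T_Xu\|$ and $\|T_Xv\|$ norms on the right-hand side (they are not merely an artifact of $T_{T_Xu}v$ and $T_u\,T_Xv$), and your proposal does not carry it out. So the reduction you set up is fine, but the claimed bound on $\mc{R}_1$ (and the analogous one on $\mc{R}_2$) is overstated, and the proof as sketched has a genuine gap precisely at the crucial point.
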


\subsection{Transport and elliptic equations}

System \eqref{eq:ddeuler} is basically a coupling of transport equations of the type
$$
\left\{\begin{array}{l}
\d_tf\,+\,v\cdot\nabla f\,=\,g\,,\\[1ex]
f_{|t=0}\,=\,f_0\,.
\end{array}\right.
\leqno(T)
$$
So, we often need to use the following result, which enables us to solve $(T)$ in the H\"older spaces framework.
\begin{prop}\label{p:transport}
Let $\sigma>0$ ($\sigma>-1$ if $\,\div v=0$).\\
Let $f_0\in\mc{C}^\sigma$,
 $g\in L^1([0,T];\mc{C}^\sigma)$ and  $v$
 be a time dependent vector field in $\mc{C}_b([0,T]\times\R^N)$ such that   
$$
\begin{array}{lllll}
\nabla v&\in&  L^1([0,T];L^\infty) &\hbox{\rm if} &\sigma<1\,,\\[2ex]
\nabla v&\in& L^1([0,T];\mc{C}^{\sigma-1}) &\hbox{\rm if} &\sigma>1\,.
\end{array}
$$

Then equation $(T)$ has a unique solution  $f$ in the space
$\Bigl(\bigcap_{\sigma'<\sigma}\mc{C}([0,T];\mc{C}^{\sigma'})\Bigr)\bigcap\mc{C}_w([0,T];\mc{C}^{\sigma})$.

Moreover,  for all $t\in[0,T]$ we have
\begin{equation}\label{est:no-loss-1}
e^{-CV(t)}\,\|f(t)\|_{\mc{C}^\sigma}\,\leq\,
\|f_0\|_{\mc{C}^\sigma}\,+\,\int_0^t e^{-CV(\tau)}\,\|g(\tau)\|_{\mc{C}^\sigma}\,d\tau
\end{equation} 
$$\displaylines{
\mbox{with}\quad\!\! V'(t):=\left\{
\begin{array}{ll}
\!\|\nabla v(t)\|_{L^\infty}\! & \mbox{if}\!\!\!\quad\sigma<1\,,\\[2ex]
\!\|\nabla v(t)\|_{\mc{C}^{\sigma-1}}\! & \mbox{if}\!\!\!\quad\sigma>1\,. 
\end{array}\right.\hfill} $$
 If $f\equiv v$ then, for all $\sigma>0$ ($\sigma>-1$ if $\,\div v=0$),
    estimate  \eqref{est:no-loss-1} holds with
$V'(t):=\|\nabla f(t)\|_{L^\infty}$.
\end{prop}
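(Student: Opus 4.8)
The plan is to follow the classical strategy for transport estimates in H\"older spaces, which proceeds by localizing the equation in frequency. First I would apply the dyadic block $\Delta_j$ to the equation $(T)$, obtaining
$$
\d_t\Delta_jf\,+\,v\cdot\nabla\Delta_jf\,=\,\Delta_jg\,+\,R_j\,,\qquad R_j:=\,[v\cdot\nabla,\Delta_j]f\,,
$$
so that $\Delta_jf$ itself solves a transport equation along the \emph{same} vector field $v$. Since the flow of $v$ is measure-preserving when $\div v=0$ (and in any case has controlled Jacobian), the method of characteristics gives the pointwise-in-time $L^\infty$ bound
$$
\|\Delta_jf(t)\|_{L^\infty}\,\leq\,\|\Delta_jf_0\|_{L^\infty}\,+\,\int_0^t\bigl(\|\Delta_jg(\tau)\|_{L^\infty}\,+\,\|R_j(\tau)\|_{L^\infty}\bigr)\,d\tau\,.
$$
The core of the argument is then the commutator estimate: one shows $\|R_j\|_{L^\infty}\lesssim c_j(\tau)\,2^{-j\sigma}\,\|f(\tau)\|_{\mc{C}^\sigma}\,\|\nabla v(\tau)\|_{L^\infty}$ (with $\sum c_j\leq1$) when $-1<\sigma<1$, using Bony's decomposition \eqref{eq:bony} to split $v\cdot\nabla f$ into paraproduct and remainder pieces and estimating $[\Delta_j,T_{v^k}]\d_kf$, $T_{\d_k\Delta_jf}v^k$, etc., by Bernstein's inequalities (Lemma \ref{l:bern}); the divergence-free hypothesis is exactly what lets $\sigma$ reach down below $0$, since it allows one to write $v\cdot\nabla f=\div(fv)$ and absorb the would-be dangerous low-frequency term. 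For $\sigma>1$ one instead needs $\|R_j\|_{L^\infty}\lesssim c_j\,2^{-j\sigma}\|f\|_{\mc{C}^\sigma}\|\nabla v\|_{\mc{C}^{\sigma-1}}$, which again is a standard commutator bound.

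Next I would multiply the block inequality by $2^{j\sigma}$, take the supremum over $j\geq-1$, and insert into the $L^\infty$ bound above; writing $V'(t)$ for $\|\nabla v(t)\|_{L^\infty}$ (resp.\ $\|\nabla v(t)\|_{\mc{C}^{\sigma-1}}$) this yields
$$
\|f(t)\|_{\mc{C}^\sigma}\,\leq\,\|f_0\|_{\mc{C}^\sigma}\,+\,\int_0^t\|g(\tau)\|_{\mc{C}^\sigma}\,d\tau\,+\,C\int_0^tV'(\tau)\,\|f(\tau)\|_{\mc{C}^\sigma}\,d\tau\,,
$$
and Gr\"onwall's lemma in the form $\tfrac{d}{dt}\bigl(e^{-CV(t)}\|f(t)\|_{\mc{C}^\sigma}\bigr)\leq e^{-CV(t)}\|g(t)\|_{\mc{C}^\sigma}$ gives exactly \eqref{est:no-loss-1}. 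The special case $f\equiv v$ is immediate, since then $V'=\|\nabla f\|_{L^\infty}$ is admissible for every $\sigma>0$ (or $\sigma>-1$ when $\div v=0$), the point being that one never needs more than Lipschitz control of the transporting field in that estimate.

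For the \textbf{existence and uniqueness} part I would first establish uniqueness: the difference of two solutions solves $(T)$ with zero data, and an $L^\infty$ (or $\mc{C}^{\sigma'}$ for $\sigma'<\sigma$, $\sigma'\geq0$) energy estimate of the same type forces it to vanish. For existence, I would regularize: mollify $v$, $f_0$, $g$ to smooth data $v_n,f_{0,n},g_n$, solve $(T)$ along the smooth flow of $v_n$ by characteristics to get smooth $f_n$, and apply the uniform estimate \eqref{est:no-loss-1} (which is insensitive to $n$ since $V$ is controlled by the original data). This gives a bound for $(f_n)$ in $L^\infty([0,T];\mc{C}^\sigma)$; the equation then bounds $\d_tf_n$ in a rougher space, so by Ascoli--Arzel\`a and interpolation $(f_n)$ converges, up to a subsequence, in $\mc{C}([0,T];\mc{C}^{\sigma'}_{loc})$ for each $\sigma'<\sigma$ to a limit $f$ solving $(T)$; lower semicontinuity of the $\mc{C}^\sigma$ norm gives $f\in\mc{C}_w([0,T];\mc{C}^\sigma)$. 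The \textbf{main obstacle} is the commutator estimate for $R_j=[v\cdot\nabla,\Delta_j]f$ in the low-regularity regime, in particular making the divergence-free structure do its job so that the estimate survives for $\sigma\in(-1,0]$; everything else is bookkeeping with Bernstein's inequalities and Gr\"onwall. (This is, of course, a standard result; see \cite{B-C-D}, chapter 3.)
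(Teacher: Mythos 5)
Your proposal is correct and is essentially the standard proof. The paper itself does not prove this proposition: it is quoted from \cite{B-C-D} (see the blanket remark at the start of Section \ref{s:tools}), and the argument there is precisely the one you outline — apply $\Delta_j$ to $(T)$, note that $\Delta_j f$ is transported by the same field $v$ with source $\Delta_j g + [v\cdot\nabla,\Delta_j]f$, bound the commutator via Bony's decomposition and Bernstein's inequalities (using $v\cdot\nabla f=\div(fv)$ to reach $\sigma\in(-1,0]$ when $\div v=0$), take $\sup_j 2^{j\sigma}\|\cdot\|_{L^\infty}$, and close with Gr\"onwall; the $f\equiv v$ refinement follows because the $\|\nabla v\|_{\mc{C}^{\sigma-1}}\|\nabla f\|_{L^\infty}$ term in the commutator bound becomes $\|\nabla v\|_{L^\infty}\|v\|_{\mc{C}^\sigma}$ when $f=v$. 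One small remark: since here one works in $B^\sigma_{\infty,\infty}$ (i.e.\ $r=\infty$), a uniformly bounded sequence $(c_j)$ in the commutator estimate is already enough — summability of $(c_j)$ is only needed for $r<\infty$, where the no-loss estimate requires additional care.
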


Finally, we shall make an extensive use of energy  estimates  for the
following elliptic equation:
\begin{equation}\label{eq:elliptic}
-\,\div(a\,\nabla\Pi)=\div F\quad\hbox{in }\ \R^N\,,
\end{equation}
where $a=a(x)$ is a given suitably smooth bounded function satisfying 
\begin{equation}\label{eq:ellipticity}
a_*\,:=\,\inf_{x\in\R^N}a(x)\,>\,0\,.
\end{equation}

We shall use  the following  result
based on Lax-Milgram's theorem (see the proof in e.g. \cite{D2010}).  
\begin{lemma}\label{l:laxmilgram}
For all vector field $F$ with coefficients in $L^2$, there exists a tempered distribution $\Pi$,
unique up to  constant functions, 
such that  $\nabla\Pi\in L^2$ and  
equation $\eqref{eq:elliptic}$ is satisfied. 
In addition, we have 
$$
a_*\,\|\nabla\Pi\|_{L^2}\,\leq\,\|F\|_{L^2}\,.
$$
\end{lemma}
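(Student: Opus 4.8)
The plan is to construct $\Pi$ via the Lax--Milgram lemma on a homogeneous Sobolev-type space. Let $\mc{H}$ denote the space of tempered distributions, identified modulo additive constants, whose gradient belongs to $L^2(\R^N)^N$, equipped with the inner product $(\Pi,\varphi)\mapsto\int_{\R^N}\nabla\Pi\cdot\nabla\varphi\,dx$ and the associated norm $\|\Pi\|_{\mc{H}}:=\|\nabla\Pi\|_{L^2}$. The first task is to check that $\mc{H}$ is a Hilbert space: for $N\geq3$ this is classical, since Sobolev's inequality embeds $\mc{H}$ into $L^{2N/(N-2)}$, while for $N=2$ one argues directly, noting that if $(\nabla\Pi_n)_n$ is Cauchy in $L^2$ it converges to some $v\in L^2$ with $\partial_iv_j-\partial_jv_i=0$ in $\mc{S}'$, and that such a curl-free field is globally the gradient of a tempered distribution (a Poincar\'e-type statement, read off on the Fourier side away from the origin), so the limit lies in $\mc{H}$.

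Next I would introduce the bilinear form and the linear functional naturally attached to \eqref{eq:elliptic}, namely
$$
B(\Pi,\varphi)\,:=\,\int_{\R^N}a\,\nabla\Pi\cdot\nabla\varphi\,dx
\qquad\hbox{and}\qquad
L(\varphi)\,:=\,-\int_{\R^N}F\cdot\nabla\varphi\,dx\,,
$$
defined on $\mc{H}\times\mc{H}$ and on $\mc{H}$ respectively. Boundedness of $a$ gives $|B(\Pi,\varphi)|\leq\|a\|_{L^\infty}\|\Pi\|_{\mc{H}}\|\varphi\|_{\mc{H}}$; the ellipticity condition \eqref{eq:ellipticity} gives coercivity, $B(\Pi,\Pi)\geq a_*\,\|\Pi\|_{\mc{H}}^2$; and the Cauchy--Schwarz inequality gives $|L(\varphi)|\leq\|F\|_{L^2}\,\|\varphi\|_{\mc{H}}$, so $L$ is a bounded linear functional on $\mc{H}$. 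By the Lax--Milgram lemma there is then a unique $\Pi\in\mc{H}$ such that $B(\Pi,\varphi)=L(\varphi)$ for all $\varphi\in\mc{H}$. Specializing to $\varphi\in C^\infty_c(\R^N)\subset\mc{H}$, this identity is exactly the statement that $-\div(a\,\nabla\Pi)=\div F$ holds in the sense of distributions, which makes sense since $a\,\nabla\Pi$ and $F$ both lie in $L^2\subset L^1_{\rm loc}$. Uniqueness up to constants is immediate: the difference of two solutions satisfies the homogeneous problem, so by coercivity its gradient vanishes.

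The energy estimate finally follows by choosing the solution itself as the test function: $a_*\,\|\nabla\Pi\|_{L^2}^2\leq B(\Pi,\Pi)=L(\Pi)\leq\|F\|_{L^2}\,\|\nabla\Pi\|_{L^2}$, which yields $a_*\,\|\nabla\Pi\|_{L^2}\leq\|F\|_{L^2}$. The only genuinely delicate point in all this is the first step, the functional-analytic set-up: proving that $\mc{H}$ is complete and that a curl-free $L^2$ vector field is globally a gradient in $\mc{S}'$. This is exactly what forces the ``unique up to constants'' phrasing, and it is the place where the low-dimensional case $N=2$ must be handled separately, though routinely; for this technical point I would simply refer to the corresponding statement in \cite{D2010}. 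Everything else is the textbook Lax--Milgram argument.
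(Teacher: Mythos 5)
Your proposal is correct and takes essentially the same route as the paper, which proves this lemma by exactly the Lax--Milgram argument you describe (carried out, as the paper indicates, in \cite{D2010}) on the space of tempered distributions modulo constants with gradient in $L^2$: boundedness of the form, coercivity from \eqref{eq:ellipticity}, and testing with the solution itself yield the stated estimate. The only point to keep tidy is that identifying a distributional solution with the variational solution in $\mc{H}$ (needed for the uniqueness claim) uses the density of gradients of test functions in the space of curl-free $L^2$ fields, a routine fact of the same nature as the completeness step you already isolate and for which the reference to \cite{D2010} is appropriate.
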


\section{Propagation of striated regularity} \label{s:striated}

Now we are ready to tackle the proof of theorem \ref{t:stri-N}. We will carry out it in a standard way: first of all we will
prove a priori estimates for smooth solutions to \eqref{eq:ddeuler}. Then, we will construct a sequence of regular
approximated solutions. Finally, thanks to the the just proved upper bounds, we will get convergence of this sequence to a
solution of our initial system, with the required properties.

\subsection{A priori estimates}

First of all, we will prove a priori estimates for a smooth solution $(\rho,u,\nabla\Pi)$ to system \eqref{eq:ddeuler}.

\subsubsection{Estimates for the density and the velocity field}

From first equation of \eqref{eq:ddeuler}, it follows that
$$
\rho(t,x)\,=\,\rho_0\left(\psi^{-1}_t(x)\right)\,,
$$
so, as the flow $\psi_t$ is a diffeomorphism over $\R^N$ at all fixed time, we have that
\begin{equation}
 0\,<\,\rho_*\,\leq\,\rho(t)\,\leq\,\rho^* \label{est:rho_L^inf}\,.
\end{equation}
Applying the operator $\,\d_i$ to the same equation, using classical $L^p$ estimates for the transport equation and Gronwall's lemma,
we get
\begin{equation} \label{est:Drho_L^inf}
 \|\nabla\rho(t)\|_{L^\infty}\,\leq\,\|\nabla\rho_0\|_{L^\infty}\,\exp\left(C\int^t_0\|\nabla u\|_{L^\infty}\,d\tau\right)\,.
\end{equation}

From the equation for the velocity, instead, we get in the same way
$$
 \|u(t)\|_{L^p}\,\leq\,\|u_0\|_{L^p}\,+\,\int^t_0\left\|\frac{\nabla\Pi}{\rho}\right\|_{L^p}\,d\tau\,;
$$
so, using \eqref{est:rho_L^inf} and H\"older inequalities, the following estimate holds for some $\theta\in\,]0,1[\,$:
\begin{equation} \label{est:u_L^p}
 \|u(t)\|_{L^p}\,\leq\,\|u_0\|_{L^p}\,+\,\frac{C}{\rho_*}\,\int^t_0\|\nabla\Pi\|^\theta_{L^2}\,
\|\nabla\Pi\|^{1-\theta}_{L^\infty}\,d\tau\,.
\end{equation}

\begin{rem}
 Let us observe that, as regularity of the pressure goes like that of the velocity field, one can try to estimate directly
the $L^p$ norm of the pressure term. Unfortunately, we can't solve its (elliptic) equation in this space without assuming a
smallness condition on the gradient of the density. So, we will prove that $\nabla\Pi$ is in $L^2\cap L^\infty$,
which is actually stronger than the previous property and requires no other hypothesis on the density term.
\end{rem}

From \eqref{est:Drho_L^inf} it's clear that we need an estimate for the $L^\infty$ norm for the gradient of the velocity.
As remarked before, we can't expect to get it from the hypothesis $\Omega\in L^\infty$; the key will be the further assumption
of more regularity of the vorticity along the directions given by the family $X_0$. \\
Here we quote also a fundamental lemma, whose proof can be found in \cite{B-C-D} (chapter 7) for the $2$-dimensional
case, in \cite{D1999} (section 3) and \cite{G-SR} (again section 3) for the general one. It is the key point to get the
velocity field to be Lipschitz and it turns out to be immediately useful in the sequel.

\begin{lemma} \label{l:Du_L^inf}
 Fix $\veps\in\,]0,1[$ and an integer $m\geq N-1$ and take a non-degenerate family $Y=\left(Y_\lambda\right)_{1\leq\lambda\leq m}$ of
$\mc{C}^\veps$ vector-fields over $\R^N$ such that also their divergences are in $\,\mc{C}^\veps$.

Then, for all indices $1\leq i,j\leq N$, there exist $\mc{C}^\veps$ functions $a_{ij}$, $b^{k\lambda}_{ij}$ (with $1\leq k\leq N$,
$1\leq\lambda\leq m$)  such that, for all $(x,\xi)\in\R^N\times\R^N$, the following equality holds:
$$
\xi_i\,\xi_j\,=\,a_{ij}(x)|\xi|^2\,+\,\sum_{k,\lambda}b^{k\lambda}_{ij}(x)\left(Y_\lambda(x)\cdot\xi\right)\xi_k\,.
$$
Moreover, the functions in the previous relation could be chosen such that
$$
 \left\|a_{ij}\right\|_{L^\infty}\;\leq\;1\qquad\mbox{ and }\qquad
\left\|b^{k\lambda}_{ij}\right\|_{\mc{C}^\veps}\;\leq\;C\,\frac{m^{2N-2}}{I(Y)}\,\,|||Y|||\,^{9N-10}_{\mc{C}^\veps}\,.
$$
\end{lemma}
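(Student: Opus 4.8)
The plan is to reduce the algebraic identity to a partition-of-unity argument on the unit sphere driven by the non-degeneracy condition $I(Y)>0$. First I would fix $x\in\R^N$ and work at the level of the fibre $\{x\}\times\R^N$: the goal is to write the rank-one quadratic form $\xi\mapsto\xi_i\xi_j$ as $a_{ij}(x)|\xi|^2$ plus a combination of the ``good'' symbols $(Y_\lambda(x)\cdot\xi)\,\xi_k$, which are precisely the ones that, after taking $F(D)$ with $F(\xi)=\xi_i\xi_j/|\xi|^2$, correspond to derivation along $Y_\lambda$ and hence will be controllable by striated regularity. By homogeneity of degree $2$ it suffices to produce such a decomposition for $\xi$ on the unit sphere $S^{N-1}$, with coefficients depending continuously (indeed in $\mc{C}^\veps$) on $x$.

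The key step is the following pointwise linear-algebra fact: for $\xi\in S^{N-1}$, the non-degeneracy of the family $Y(x)=(Y_\lambda(x))_\lambda$ guarantees that for each $\xi$ there is at least one multi-index $\Lambda=(\lambda_1,\dots,\lambda_{N-1})\in\Lambda^m_{N-1}$ such that $(\wedge^{N-1}Y_\Lambda(x))\cdot\xi$ is bounded below, quantitatively by a constant times $I(Y)^{N-1}$ (after a compactness/averaging argument over $\xi\in S^{N-1}$, since $\sup_\Lambda|\wedge^{N-1}Y_\Lambda(x)|^{1/(N-1)}\geq I(Y)$). For such a $\Lambda$, the vectors $(Y_{\lambda_1}(x),\dots,Y_{\lambda_{N-1}}(x),\xi)$ form a basis of $\R^N$; expressing the coordinate vectors $e_i$ in this basis expresses $\xi_i=e_i\cdot\xi$ and then $\xi_i\xi_j$ as a combination of terms $(Y_{\lambda_k}(x)\cdot\xi)\,\xi_j$ and $|\xi|^2$, with coefficients that are rational functions — polynomials divided by $\det(Y_{\lambda_1},\dots,Y_{\lambda_{N-1}},\xi)=(\wedge^{N-1}Y_\Lambda(x))\cdot\xi$ — of the entries of $Y(x)$ and of $\xi$. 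One then patches these local (in $\xi$) decompositions together using a partition of unity on $S^{N-1}$ subordinate to the open sets where each $\Lambda$ works; on each patch the denominator is bounded below, so the resulting coefficients are smooth in $\xi$ and, as functions of $x$, are smooth (hence $\mc{C}^\veps$) functions of the entries of $Y(x)$, which are themselves $\mc{C}^\veps$ — so composition (Proposition \ref{p:comp}(i), or the tame product Corollary \ref{c:op}) keeps everything in $\mc{C}^\veps$. Finally one homogenizes back to all of $\R^N$ by multiplying by $|\xi|^2$; the $a_{ij}$ can be normalized so that $\|a_{ij}\|_{L^\infty}\leq1$ by absorbing any excess into the $b$-terms (or simply because $|\xi_i\xi_j|\leq|\xi|^2$ makes the diagonal part a genuine average), and the $\mc{C}^\veps$ bound on $b^{k\lambda}_{ij}$ is obtained by carefully tracking how many times the entries of $Y$ and the factor $1/I(Y)$ enter: each coefficient carries one power of $1/((\wedge^{N-1}Y_\Lambda)\cdot\xi)\sim 1/I(Y)^{N-1}$ and polynomially many factors of $|||Y|||$, and the number of multi-indices $\Lambda$ contributes the combinatorial factor $m^{2N-2}$.

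The main obstacle is precisely this last bookkeeping: getting the explicit powers $m^{2N-2}$, $I(Y)^{-1}$ and $|||Y|||_{\mc{C}^\veps}^{9N-10}$ right requires quantifying every step — the lower bound for the best $\Lambda$ in terms of $I(Y)$ (which loses a power because one passes from the $(N-1)$-th root back to the determinant, and from a sup over $\Lambda$ to a fixed $\Lambda$ valid on a whole patch), the size of the inverse of the $\mc{C}^\veps$ norm of a product/quotient of $\mc{C}^\veps$ functions (each multiplication roughly multiplies the constants and each inversion of a function bounded below by $\delta$ costs $\delta^{-1}$ times powers of the numerator's $\mc{C}^\veps$ norm), and the count of how many such operations are chained together to build $\xi_i\xi_j$ out of an $N\times N$ Cramer-type formula. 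Since the statement only claims that the coefficients \emph{can} be chosen with these bounds, it is enough to exhibit one explicit construction and estimate it; I would not attempt to optimize the exponents, only to verify the stated ones are achievable, referring to the analogous computations in \cite{B-C-D}, \cite{D1999} and \cite{G-SR} for the details that are identical here.
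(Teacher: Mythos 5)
The paper does not actually prove this lemma; it cites \cite{B-C-D}, \cite{D1999} and \cite{G-SR} for the proof. Your sketch, however, contains a genuine gap at its central step. You assert that for each $\xi\in S^{N-1}$ the non-degeneracy condition yields a multi-index $\Lambda\in\Lambda^m_{N-1}$ with $\left|\bigl(\wedge^{N-1}Y_\Lambda(x)\bigr)\cdot\xi\right|$ bounded below by $c\,I(Y)^{N-1}$. This is false: $I(Y)$ controls from below only $\sup_\Lambda\left|\wedge^{N-1}Y_\Lambda(x)\right|$, not the pairing of any of these vectors with a prescribed $\xi$. Take $N=2$, $m=1$, $Y_1\equiv e_1$ (admissible since $m\geq N-1$, and $I(Y)=1$): the only choice gives $\wedge^1 Y_1=e_2$, and for $\xi=e_1$ the pairing vanishes, so no $\Lambda$ works. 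Hence $(Y_{\lambda_1}(x),\dots,Y_{\lambda_{N-1}}(x),\xi)$ is not always a basis and the Cramer expansion breaks down. Independently of this, even where it does not break down, your coefficients come out as rational functions of $x$ \emph{and} $\xi$, and your partition of unity lives on $S^{N-1}$, i.e.\ in the $\xi$-variable. But the lemma demands $a_{ij}$ and $b^{k\lambda}_{ij}$ depending on $x$ alone and valid simultaneously for every $\xi$ --- that is exactly what lets section \ref{sss:est-p} take $\mc{F}^{-1}_\xi$ and deduce the pointwise bound $\left\|\nabla^2\Pi\right\|_{L^\infty}\leq C\bigl(\left\|\Delta\Pi\right\|_{L^\infty}+\left\|\d_X\nabla\Pi\right\|_{L^\infty}\bigr)$. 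A decomposition with $\xi$-dependent coefficients is a strictly weaker statement and cannot be used there.

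The correct route uses a $\xi$-independent basis. For a fixed $\Lambda$ set $W_\Lambda:=\wedge^{N-1}Y_\Lambda$; the defining relation gives $W_\Lambda\cdot Y_{\lambda_l}=\det(Y_{\lambda_1},\dots,Y_{\lambda_{N-1}},Y_{\lambda_l})=0$, so $W_\Lambda$ is orthogonal to every $Y_{\lambda_l}$, and wherever $W_\Lambda\neq0$ the family $(Y_{\lambda_1},\dots,Y_{\lambda_{N-1}},W_\Lambda)$ is a basis depending only on $x$. Decomposing $\xi$ orthogonally into its projection $P_\Lambda\xi$ onto $\mathrm{span}\{Y_{\lambda_l}\}$ (expressed via the Gram matrix of the $Y_{\lambda_l}$ in terms of the scalars $Y_{\lambda_l}\cdot\xi$) plus a component along $W_\Lambda$, and eliminating the residual $(W_\Lambda\cdot\xi)^2$ through the Pythagorean identity $(W_\Lambda\cdot\xi)^2=|W_\Lambda|^2\bigl(|\xi|^2-|P_\Lambda\xi|^2\bigr)$, yields the identity for this $\Lambda$ with $a^\Lambda_{ij}=W_\Lambda^iW_\Lambda^j/|W_\Lambda|^2$ (whence $|a^\Lambda_{ij}|\leq1$) and $b$-coefficients rational in the entries of $Y$ only. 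Patching is then done in $x$, e.g.\ via $\chi_\Lambda(x)=|W_\Lambda(x)|^{2M}/\sum_{\Lambda'}|W_{\Lambda'}(x)|^{2M}$ with $M$ large enough to clear denominators; the sum below is bounded from below by $I(Y)^{2M(N-1)}$ thanks to non-degeneracy, and Proposition \ref{p:comp}(i) together with Corollary \ref{c:op} control the $\mc{C}^\veps$ norms of the resulting rational expressions. It is this bookkeeping --- not the one you outline --- that produces the stated exponents $m^{2N-2}$, $I(Y)^{-1}$ and $|||Y|||^{9N-10}_{\mc{C}^\veps}$.
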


Now, we can state the stationary estimate which says that the velocity field $u$ is Lipschitz. This can be done as in
the classical case, because it's based only on the Biot-Savart law.
\begin{prop} \label{p:Du_L^inf}
 Fix $\veps\in\,]0,1[$ and $q\in\,]1,+\infty[$ and take a non-degenerate family $Y=\left(Y_\lambda\right)_{1\leq\lambda\leq m}$
of $\mc{C}^\veps$ vector-fields over $\R^N$ such that also their divergences are still in $\mc{C}^\veps$.

Then there exists a constant $C$, depending only on the space dimension $N$ and on the number of vector-fields $m$, such that, for all skew-symmetric
matrices $\Omega$ with coefficients in $L^q\cap\mc{C}^\veps_Y$, the corresponding (by \eqref{eq:BS-law}) divergence-free vector-field $u$ satisfies
\begin{equation}
 \left\|\nabla u\right\|_{L^\infty}\,\leq\,C\,\left(\frac{q^2}{q-1}\,\|\Omega\|_{L^q}\,+\,\frac{1}{\veps\,(1-\veps)}\,\|\Omega\|_{L^\infty}
\log\left(e\,+\,\frac{\left\|\Omega\right\|_{\mc{C}^\veps_Y}}{\|\Omega\|_{L^\infty}}\right)\right)\,. \label{est:Du_L^inf}
\end{equation}
\end{prop}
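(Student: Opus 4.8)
The plan is to reduce this stationary estimate to the same ``logarithmic'' argument used for classical vortex patches: decompose $\nabla u$ by Littlewood--Paley at a threshold $2^J$, control the low frequencies by the Lebesgue norm, control the high frequencies by the striated regularity, and optimise over $J$. Write the Biot--Savart relation \eqref{eq:BS_grad} as $\partial_\ell u^i=\sum_m P_{\ell m}(D)\Omega_{im}$, where $P_{\ell m}$ is the smooth $0$-homogeneous Fourier multiplier with symbol $\xi_\ell\xi_m/|\xi|^2$. Since both the class $\mc{C}^\veps_Y$ and the norm $\|\,\cdot\,\|_{\mc{C}^\veps_Y}$ are invariant under the rescaling $Y\mapsto cY$, I first normalise $\wtilde{|||}Y|||_{\mc{C}^\veps}=1$; then Lemma \ref{l:Du_L^inf} provides $\mc{C}^\veps$ functions $a_{\ell m}$, $b^{k\lambda}_{\ell m}$ with $\|a_{\ell m}\|_{L^\infty}\le1$ and $\|b^{k\lambda}_{\ell m}\|_{\mc{C}^\veps}\le C\,m^{2N-2}/I(Y)$ realising the pointwise symbol decomposition $\xi_\ell\xi_m=a_{\ell m}(x)|\xi|^2+\sum_{k,\lambda}b^{k\lambda}_{\ell m}(x)\,(Y_\lambda(x)\cdot\xi)\,\xi_k$.

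For the low frequencies I bound the single block $\Delta_{-1}\nabla u$ by Bernstein's inequality (Lemma \ref{l:bern}, from $L^q$ to $L^\infty$) and then by $\tfrac{q^2}{q-1}\|\Omega\|_{L^q}$ through the Calder\'on--Zygmund estimate \eqref{est:CZ}; for $0\le j<J$ each block satisfies $\|\Delta_j\nabla u\|_{L^\infty}\le C\|\Omega\|_{L^\infty}$ because $\Delta_jP_{\ell m}(D)$ is convolution against a kernel whose $L^1$ norm is independent of $j$. Summing, $\|S_J\nabla u\|_{L^\infty}\le C\big(\tfrac{q^2}{q-1}\|\Omega\|_{L^q}+J\|\Omega\|_{L^\infty}\big)$, and the same bound holds blockwise after subtracting the term $a_{\ell m}\Omega_{im}$.

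The core of the proof is the high-frequency analysis. Inserting the symbol decomposition into the Biot--Savart formula and pulling the frequency-independent coefficients $a_{\ell m}(x)$, $b^{k\lambda}_{\ell m}(x)$, $Y_\lambda(x)$ out of the inverse Fourier integral yields the identity
\[
\partial_\ell u^i=\sum_m a_{\ell m}\,\Omega_{im}\;-\;\sum_{m,k,\lambda}b^{k\lambda}_{\ell m}\,\partial_{Y_\lambda}\!\big(\partial_k(-\Delta)^{-1}\Omega_{im}\big).
\]
The first sum is dominated in $L^\infty$ by $\|a\|_{L^\infty}\|\Omega\|_{L^\infty}\le\|\Omega\|_{L^\infty}$. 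In the second sum I commute the derivation past the smoothing operator, $\partial_{Y_\lambda}\big(\partial_k(-\Delta)^{-1}\Omega_{im}\big)=\partial_k(-\Delta)^{-1}\big(\partial_{Y_\lambda}\Omega_{im}\big)+[\partial_{Y_\lambda},\partial_k(-\Delta)^{-1}]\Omega_{im}$, and I rewrite the a priori ill-defined $\partial_{Y_\lambda}\Omega_{im}$ as $\div(\Omega_{im}Y_\lambda)-\Omega_{im}\,\div Y_\lambda$: the first term lies in $\mc{C}^{\veps-1}$ by the striated hypothesis, the second by Corollary \ref{c:op} together with the embedding $L^\infty\hookrightarrow\mc{C}^{\veps-1}$, so that, $\partial_k(-\Delta)^{-1}$ being of order $-1$, Proposition \ref{p:CZ} places $\partial_k(-\Delta)^{-1}(\partial_{Y_\lambda}\Omega_{im})$ in $\mc{C}^\veps$. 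The commutator $[\partial_{Y_\lambda},\partial_k(-\Delta)^{-1}]\Omega_{im}$ is handled by replacing $\partial_{Y_\lambda}$ by the paravector field $T_{Y_\lambda}$ (the error controlled by Lemma \ref{l:T_X}) and invoking the commutator Lemma \ref{l:comm} with a multiplier of order $-1$ and $\sigma=\veps$, which again yields a $\mc{C}^\veps$ function, at the cost of a factor $(1-\veps)^{-1}$. Multiplying by the $\mc{C}^\veps$ coefficients $b^{k\lambda}_{\ell m}$ via Corollary \ref{c:op} and using the normalisation $\wtilde{|||}Y|||_{\mc{C}^\veps}=1$ to absorb $I(Y)^{-1}$ and $\|\div Y_\lambda\|_{\mc{C}^\veps}$ into $\|\Omega\|_{\mc{C}^\veps_Y}$, the whole second sum is a function $\Psi^i_\ell\in\mc{C}^\veps$ with $\|\Psi^i_\ell\|_{\mc{C}^\veps}\le C\,m^{2N-2}\,(\veps(1-\veps))^{-1}\|\Omega\|_{\mc{C}^\veps_Y}$. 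Splitting $\Psi^i_\ell$ again at frequency $2^J$ — each of its blocks is $\le C\|\Omega\|_{L^\infty}$ by the previous paragraph, while $\|(\mathrm{Id}-S_J)\Psi^i_\ell\|_{L^\infty}\le\sum_{j\ge J}2^{-j\veps}\|\Psi^i_\ell\|_{\mc{C}^\veps}\le C\veps^{-1}2^{-J\veps}\|\Psi^i_\ell\|_{\mc{C}^\veps}$ — one gets, for every $J\in\N$,
\[
\|\nabla u\|_{L^\infty}\le C\Big(\tfrac{q^2}{q-1}\|\Omega\|_{L^q}+(J+1)\|\Omega\|_{L^\infty}+\tfrac{m^{2N-2}}{\veps(1-\veps)}\,2^{-J\veps}\|\Omega\|_{\mc{C}^\veps_Y}\Big).
\]

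It then remains to balance the last two terms: choosing $J$ so that $2^{J\veps}\sim \tfrac{m^{2N-2}}{\veps(1-\veps)}\,\|\Omega\|_{\mc{C}^\veps_Y}/\|\Omega\|_{L^\infty}$ (or $J=0$ if this is $\le1$), and observing that the factor $\veps$ produced by the optimisation cancels the one appearing inside the logarithm, we arrive at \eqref{est:Du_L^inf}. The only genuinely delicate point — and the place I expect the real work to lie — is the high-frequency term: one quantises a Fourier-multiplier symbol that has been split with $x$-dependent coefficients, so the commutators between $\partial_{Y_\lambda}$, the smoothing operator $\partial_k(-\Delta)^{-1}$ and (where needed) the truncation $S_J$ must be absorbed by the paradifferential machinery of Lemmas \ref{l:T_X}, \ref{l:comm} and \ref{l:pvec-pprod}, the ill-defined $\partial_{Y_\lambda}\Omega$ must be converted to $\div(\Omega Y_\lambda)$, and every error term has to be shown to lie in $\mc{C}^\veps$ with constants deteriorating no worse than $(\veps(1-\veps))^{-1}$.
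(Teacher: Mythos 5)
The paper does not actually prove this proposition: it merely remarks ``this can be done as in the classical case'' and, for the key ingredient Lemma \ref{l:Du_L^inf}, refers to \cite{B-C-D} (chapter 7), \cite{D1999} and \cite{G-SR} (section 3 of each). Your proposal reconstructs exactly that classical argument --- threshold cut at frequency $2^J$, pointwise symbol splitting of $\xi_\ell\xi_m/|\xi|^2$ from Lemma \ref{l:Du_L^inf}, paravector and commutator estimates (Lemmas \ref{l:T_X}, \ref{l:comm}, Proposition \ref{p:CZ}) to put the $b$-part in $\mc{C}^\veps$, logarithmic optimisation over $J$ --- so it follows the same route the paper points to, and the final bookkeeping (the $\veps^{-1}$ from the geometric series, the $(1-\veps)^{-1}$ from the commutator lemma, and the cancellation of the optimisation $\veps$ against the one inside the logarithm) is correct.

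One step, however, is circular as written. To control the error $(\partial_{Y_\lambda}-T_{Y_\lambda})\partial_k(-\Delta)^{-1}\Omega_{im}$ in $\mc{C}^\veps$ by Lemma \ref{l:T_X} you must invoke the $t=1$ endpoint, which requires $\nabla\partial_k(-\Delta)^{-1}\Omega_{im}\in L^\infty$; but that gradient is a double Riesz transform of $\Omega$, i.e.\ up to signs a component of $\nabla u$, so its $L^\infty$ norm is precisely the quantity you are estimating. A priori you only know it lies in $\mc{C}^0_*$, which through the $t<1$ case of Lemma \ref{l:T_X} gives $\mc{C}^{\veps-\eta}$ with a constant blowing up as $\eta\to0$. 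Two standard fixes: \emph{(i)} run the argument for smooth $\Omega$ and observe that this offending term enters the final inequality multiplied by $2^{-J\veps}\,\|b\|_{\mc{C}^\veps}\,\wtilde{\|}Y\|_{\mc{C}^\veps}$, so that choosing $J$ large enough --- $2^{J\veps}$ of order $m^{2N-2}/I(Y)$, which after normalisation is dominated by $\|\Omega\|_{\mc{C}^\veps_Y}/\|\Omega\|_{L^\infty}$ and hence only feeds the admissible logarithm --- lets you absorb the $\|\nabla u\|_{L^\infty}$ contribution into the left-hand side before optimising; or \emph{(ii)} mirror what the paper itself does a few lines below for $T_Yu^i$: keep $T_{Y_\lambda}$ on the $u$-side throughout and only convert to $\div(\,\cdot\,Y_\lambda)$ on the $\Omega$-side, so that $\partial_{Y_\lambda}$ never acts on a function whose gradient is merely $\mc{C}^0_*$. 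With either repair the rest of the sketch is sound.
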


\subsubsection{Estimates for the vorticity}

As in \cite{D-F}, using the well-known $L^q$ estimates for transport equation and taking advantage of Gronwall's lemma and H\"older
inequality in Lebesgue spaces, from \eqref{eq:vort} we obtain, for some $\gamma\in\,]0,1[\,$,
\begin{eqnarray}
 \|\Omega(t)\|_{L^q} & \leq & C\,\exp\left(\int^t_0\|\nabla u\|_{L^\infty}d\tau\right)\times \label{est:Om_L^q} \\
& & \times\left(\|\Omega_0\|_{L^q}\,+\,\frac{1}{\left(\rho_*\right)^2}\int^t_0e^{-\int^\tau_0\|\nabla u\|_{L^\infty}d\tau'}
\left\|\nabla\rho\right\|_{L^\infty}\,\left\|\nabla\Pi\right\|^\gamma_{L^2}\,\left\|\nabla\Pi\right\|^{1-\gamma}_{L^\infty}
d\tau\right). \nonumber
\end{eqnarray}

Moreover, of course an analogue estimate holds also for the $L^\infty$ norm:
\begin{eqnarray}
 \|\Omega(t)\|_{L^\infty} & \leq & C\,\exp\left(\int^t_0\|\nabla u\|_{L^\infty}d\tau\right)\times \label{est:Om_L^inf} \\
& & \times\left(\|\Omega_0\|_{L^\infty}\,+\,\frac{1}{\left(\rho_*\right)^2}\int^t_0e^{-\int^\tau_0\|\nabla u\|_{L^\infty}d\tau'}
\left\|\nabla\rho\right\|_{L^\infty}\,\left\|\nabla\Pi\right\|_{L^\infty}\,d\tau\right)\,. \nonumber
\end{eqnarray}

\begin{rem} \label{r:q}
 Let us fix the index $p$ pertaining to $u$ and let us call $\oline{q}$ the real number in $[2,+\infty[$ such that
$1/p\,+\,1/\oline{q}\,=\,1/2$. From our hypothesis, it's clear that $q\leq\oline{q}$; therefore, thanks to
H\"older and Young inequalities, we have
$$
\left\|\Omega\right\|_{L^{\oline{q}}}\,\leq\,\left\|\Omega\right\|^\eta_{L^q}\,\left\|\Omega\right\|^{1-\eta}_{L^\infty}\,\leq\,
\left\|\Omega\right\|_{L^q\cap L^\infty}\,.
$$
\end{rem}

\subsubsection{Estimates for the pressure term} \label{sss:est-p}

Now, let us focus on the pressure term: taking the divergence of the second equation of system \eqref{eq:ddeuler}, we discover that
it solves the elliptic equation
\begin{equation} \label{eq:Pi}
 -\,\div\left(\frac{\nabla\Pi}{\rho}\right)\,=\,\div\left(u\cdot\nabla u\right)\,.
\end{equation}
From this, remembering our hypothesis and remark \ref{r:q},
estimate \eqref{est:CZ} and lemma \ref{l:laxmilgram}, the control of $L^2$ norm immeditely follows:
\begin{equation} \label{est:Pi_L^2}
 \frac{1}{\rho^*}\,\|\nabla\Pi\|_{L^2}\,\leq\,C\,\|u\|_{L^p}\,\|\Omega\|_{L^q\cap L^\infty}\,.
\end{equation}

Moreover, we have that $\nabla\Pi$ belongs also to $L^\infty$, and so $\nabla\Pi\in L^a$ for all $a\in[2,+\infty]$.
In fact, we are going to show a stronger claim, that is to say $\nabla\Pi\in\mc{C}^1_*$. Cutting in low and
high frequencies, we have that
$$
\|\nabla\Pi\|_{\mc{C}^1_*}\,\leq\,\|\Delta_{-1}\nabla\Pi\|_{\mc{C}^1_*}\,+\,\|(\Id-\Delta_{-1})\nabla\Pi\|_{\mc{C}^1_*}\,\leq\,
C\left(\|\nabla\Pi\|_{L^2}\,+\,\|\Delta\Pi\|_{\mc{C}^0_*}\right)\,.
$$
Now, from \eqref{eq:Pi} we obtain
\begin{equation} \label{eq:Lapl-Pi}
 -\,\Delta\Pi\,=\,\nabla\left(\log\rho\right)\cdot\nabla\Pi\,+\,\rho\,\,\div\left(u\cdot\nabla u\right)\,.
\end{equation}
From this relation, the fact that $\div(u\cdot\nabla u)=\nabla u:\nabla u$ and the immersion
$L^\infty\hookrightarrow\mc{C}^0_*$, we get
\begin{eqnarray*}
\|\Delta\Pi\|_{\mc{C}^0_*}\;\;\leq\;\;\|\Delta\Pi\|_{L^\infty} & \leq &
\left\|\nabla\left(\log\rho\right)\cdot\nabla\Pi\right\|_{L^\infty}\,+\,
\left\|\rho\,\,\div\left(u\cdot\nabla u\right)\right\|_{L^\infty} \\
 & \leq & C\left(\left\|\nabla\rho\right\|_{L^\infty}\,\|\nabla\Pi\|_{L^\infty}\,+\,\rho^*\,
\left\|\nabla u\right\|^2_{L^\infty}\right)\,.
\end{eqnarray*}
Now, by interpolation we have, for some $\beta\in\,]0,1[\,$ depending only on the dimension $N$,
$$
\|\nabla\Pi\|_{L^\infty}\,\leq\,C\,\|\nabla\Pi\|^\beta_{\mc{C}^{-N/2}}\,\|\nabla\Pi\|^{1-\beta}_{\mc{C}^1_*}\,\leq\,
C\,\|\nabla\Pi\|^\beta_{L^2}\,\|\nabla\Pi\|^{1-\beta}_{\mc{C}^1_*}\,.
$$
Thanks to Young's inequality, from this relation and \eqref{est:Pi_L^2} one finally gets
\begin{equation} \label{est:Pi_C^1_*}
 \left\|\nabla\Pi\right\|_{\mc{C}^1_*}\,\leq\,C\,\left(\left(1+\|\nabla\rho\|^\delta_{L^\infty}\right)
\|u\|_{L^p}\,\|\Omega\|_{L^q\cap L^\infty}\,+\,
\rho^*\,\left\|\nabla u\right\|^2_{L^\infty}\right)\,,
\end{equation}
for some $\delta$ depending only on $\beta$, and so finally on the space dimension $N$. So our claim is now proved.

Finally, we want to find a bound on the second derivatives of the pressure term. For doing this,
we will need the striated norm of $\nabla\Pi$. In fact, passing in Fourier variables and using
lemma \ref{l:Du_L^inf}, for all $1\leq i\,,j\leq N$ we can write
$$
\xi_i\,\xi_j\,\what{\Pi}(\xi)\,=\,a_{ij}(x)|\xi|^2\,\what{\Pi}(\xi)\,+\,
\sum_{k,\lambda}b^{k\lambda}_{ij}(x)\left(X_\lambda(x)\cdot\xi\right)\xi_k\,\what{\Pi}(\xi)\,.
$$
Applying the inverse Fourier transform $\mc{F}^{-1}_\xi$ and passing to $L^\infty$ norms, we get
$$
 \left\|\nabla^2\Pi\right\|_{L^\infty}\,\leq\,C\left(\left\|\Delta\Pi\right\|_{L^\infty}\,+\,
\left\|\d_X\nabla\Pi\right\|_{L^\infty}\right)\,.
$$
Proposition 2.104 of \cite{B-C-D} tells us that
$$
\left\|\d_X\nabla\Pi\right\|_{L^\infty}\,\leq\,\frac{C}{\veps}\,\left\|\d_X\nabla\Pi\right\|_{\mc{C}^0_*}\,
\log\left(e\,+\,\frac{\left\|\d_X\nabla\Pi\right\|_{\mc{C}^\veps}}{\left\|\d_X\nabla\Pi\right\|_{\mc{C}^0_*}}\right)\,.
$$
Using Bony's paraproduct decomposition to handle the norm in $\mc{C}^0_*$ and noticing that
the function $\zeta\mapsto\zeta\log(e+k/\zeta)$ is nondecreasing, we finally get
\begin{eqnarray} 
& &  \left\|\nabla^2\Pi\right\|_{L^\infty}\,\leq\, C\Biggl(\left\|\nabla\rho\right\|_{L^\infty}\,\|\nabla\Pi\|_{\mc{C}^1_*}\,+\,
\rho^*\,\left\|\nabla u\right\|^2_{L^\infty}\,+ \label{est:D^2-Pi} \\
& & \qquad\qquad\qquad\qquad\qquad\qquad+\,\wtilde{\|}X\|_{\mc{C}^\veps}\,\|\nabla\Pi\|_{\mc{C}^1_*}\,
\log\left(e+\frac{\left\|\d_X\nabla\Pi\right\|_{\mc{C}^\veps}}{\wtilde{\|}X\|_{\mc{C}^\veps}\|\nabla\Pi\|_{\mc{C}^1_*}}\right)
\Biggr). \nonumber
\end{eqnarray}

\subsection{A priori estimates for striated regularity}

After having established the ``classical'' estimates, let us now focus on the conservation of striated regularity.
The most important step lies in finding a priori estimates for the derivations along the vector-field $X$.
So, let us now state a lemma which explains the relation between the operators $\d_X$ and $\,\div(\,\cdot\,X)$ (see also
remark \ref{r:div}).

\begin{lemma} \label{l:div}
 For every vector-field $X$ with components and divergence in $\mc{C}^\veps$, and every function $f\in\mc{C}^\eta$ for some
$\eta\in\,]0,1]$, we have
$$
\left\|\div(f\,X)\,-\,\d_Xf\right\|_{\mc{C}^{\min\{\veps,\eta\}}}\,\leq\,C\,\,\wtilde{\|}X\|_{\mc{C}^\veps}\,\|f\|_{\mc{C}^\eta}\,.
$$
Moreover, the previous inequality is still true in the limit case $\eta=0$, with $\|\cdot\|_{L^\infty}$ instead of
$\|\cdot\|_{\mc{C}^0_*}$.
\end{lemma}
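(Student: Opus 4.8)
The plan is to reduce the statement to the Leibniz rule combined with an elementary product estimate in H\"older spaces. For smooth $X$ and $f$ one has, by the Leibniz rule $\partial_i(fX^i)=X^i\partial_if+f\,\partial_iX^i$, the pointwise identity
$$\div(fX)\;=\;\sum_{i=1}^N\partial_i(fX^i)\;=\;\partial_Xf\,+\,f\,\div X\,,$$
hence $\div(fX)-\partial_Xf=f\,\div X$ identically. This passes to vector-fields $X$ with components and divergence in $\mathcal{C}^\varepsilon$ and to $f\in\mathcal{C}^\eta$; when $\varepsilon+\eta\leq1$, so that the sum $\sum_iX^i\partial_if$ need not be classically meaningful, one simply reads this relation as the definition of $\partial_Xf$. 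Thus the lemma amounts to proving
$$\|f\,\div X\|_{\mathcal{C}^{\min\{\varepsilon,\eta\}}}\;\leq\;C\,\|\div X\|_{\mathcal{C}^\varepsilon}\,\|f\|_{\mathcal{C}^\eta}\,,$$
after which it suffices to observe that $\|\div X\|_{\mathcal{C}^\varepsilon}\leq\widetilde{\|}X\|_{\mathcal{C}^\varepsilon}$ by definition of the latter quantity.

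For $\eta\in(0,1]$, I would expand the product by Bony's decomposition \eqref{eq:bony}, writing $f\,\div X=T_f(\div X)+T_{\div X}f+R(f,\div X)$. By Proposition \ref{p:op}, $T_f(\div X)\in\mathcal{C}^\varepsilon$ with norm $\leq C\|f\|_{L^\infty}\|\div X\|_{\mathcal{C}^\varepsilon}$; $T_{\div X}f\in\mathcal{C}^\eta$ with norm $\leq C\|\div X\|_{L^\infty}\|f\|_{\mathcal{C}^\eta}$; and, since $\varepsilon+\eta>0$, the remainder $R(f,\div X)\in\mathcal{C}^{\varepsilon+\eta}$ with norm $\leq C\|f\|_{\mathcal{C}^\eta}\|\div X\|_{\mathcal{C}^\varepsilon}$. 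As each of $\mathcal{C}^\varepsilon$, $\mathcal{C}^\eta$ and $\mathcal{C}^{\varepsilon+\eta}$ embeds continuously into $\mathcal{C}^{\min\{\varepsilon,\eta\}}$, the three contributions combine into the desired bound. (One could equally quote the tame estimate of Corollary \ref{c:op} after the embedding $\mathcal{C}^\eta\hookrightarrow\mathcal{C}^{\min\{\varepsilon,\eta\}}$.)

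The only point that needs separate care is the endpoint $\eta=0$, where $f$ is merely bounded and the continuity statements of Proposition \ref{p:op} sit at their limiting exponents. There I would argue directly on Littlewood--Paley blocks: $T_f(\div X)\in\mathcal{C}^\varepsilon$ still holds, since it uses only $f\in L^\infty$; for $T_{\div X}f$ one has $\|\Delta_k(T_{\div X}f)\|_{L^\infty}\leq C\|\div X\|_{L^\infty}\|f\|_{L^\infty}$ for every $k$, whence $T_{\div X}f\in\mathcal{C}^0_*=B^0_{\infty,\infty}$; and $R(f,\div X)=\sum_kr_k$ with $\widehat{r_k}$ supported in a ball of radius $\sim2^k$ and $\|r_k\|_{L^\infty}\leq C\,2^{-k\varepsilon}\|f\|_{L^\infty}\|\div X\|_{\mathcal{C}^\varepsilon}$, a series which converges in $\mathcal{C}^\varepsilon$. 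Summing, $f\,\div X\in\mathcal{C}^0_*$ with norm $\leq C\,\widetilde{\|}X\|_{\mathcal{C}^\varepsilon}\|f\|_{L^\infty}$. This endpoint bookkeeping is the only mildly delicate part of the argument; everything else is the Leibniz identity together with standard H\"older product estimates.
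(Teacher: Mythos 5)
Your proof is correct and coincides with the paper's own argument, which simply invokes the identity $\div(fX)-\partial_Xf=f\,\div X$ together with Bony's decomposition; you have merely spelled out the routine paraproduct/remainder bookkeeping and the $\eta=0$ endpoint, both of which are handled correctly.
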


\begin{proof}
 The claim immediately follows from the identity $\,\div(f\,X)\,-\,\d_Xf\,=\,f\,\div\!X$ and from Bony's paraproduct decomposition.
\end{proof}

\subsubsection{The evolution of the family of vector-fields}

First of all, we want to prove that the family of vector-fields $X(t)=\left(X_\lambda(t)\right)_{1\leq\lambda\leq m}$, where
each $X_\lambda(t)$ is defined by \eqref{def:X}, still remains non-degenerate
for all $t$, and that each $X_\lambda(t)$ still has components and divergence in $\mc{C}^\veps$.
Throughout this paragraph we will denote by $Y(t)$ a generic element of the family $X(t)$.

Applying the divergence operator to \eqref{eq:X}, an easy computation shows us that $\div Y$ satisfies
$$
\left(\d_t\,+\,u\cdot\nabla\right)\,\div Y\,=\,0\,,
$$
which immediately implies $\div Y(t)\in\mc{C}^\veps$ for all $t$ and
\begin{equation} \label{est:div-X}
 \left\|\div Y(t)\right\|_{\mc{C}^\veps}\,\leq\,C\,\left\|\div Y_0\right\|_{\mc{C}^\veps}
\exp\left(c\,\int^t_0\|\nabla u\|_{L^\infty}\,d\tau\right)\,.
\end{equation}

Moreover, starting again from \eqref{eq:X}, we get (for the details, see proposition 4.1 of \cite{D1999})
$$
\left(\d_t\,+\,u\cdot\nabla\right)\left(\stackrel{N-1}{\wedge}X_\lambda\right)\,=\,
^t\nabla u\cdot\left(\stackrel{N-1}{\wedge}X_\lambda\right)\,,
$$ 
from which it follows
$$
\left(\stackrel{N-1}{\wedge}X_\lambda\right)(t,x)\,=\,\left(\stackrel{N-1}{\wedge}X_\lambda\right)(0,\psi^{-1}_t(x))\,-\,
\int^t_0\,^t\nabla u\cdot\left(\stackrel{N-1}{\wedge}X_\lambda\right)(\tau,\psi^{-1}_t(\psi_\tau(x)))\,d\tau\,.
$$
This relation gives us
\begin{eqnarray*}
 \left|\left(\stackrel{N-1}{\wedge}X_\lambda\right)(0,\psi^{-1}_t(x))\right| & \leq &
\left|\left(\stackrel{N-1}{\wedge}X_\lambda\right)(t,x)\right|\,+\, \\
& & +\,\int^t_0\left\|\nabla u(t-\tau)\right\|_{L^\infty}\,
\left|\left(\stackrel{N-1}{\wedge}X_\lambda\right)(t-\tau,\psi^{-1}_\tau(x))\right|\,d\tau\,,
\end{eqnarray*}
and by Gronwall's lemma one gets
$$
\left|\left(\stackrel{N-1}{\wedge}X_\lambda\right)(t,x)\right|\,\geq\,\left|\left(\stackrel{N-1}{\wedge}X_{0,\lambda}\right)(\psi^{-1}_t(x))\right|\,
e^{-c\,\int^t_0\|\nabla u\|_{L^\infty}\,d\tau}\,.
$$
Therefore the family still remains non-degenerate at every time $t$:
\begin{equation} \label{est:I}
 I(X(t))\,\geq\,I(X_0)\,\exp\left(-\,c\int^t_0\|\nabla u\|_{L^\infty}\,d\tau\right)\,.
\end{equation}

Finally, again from the evolution equation \eqref{eq:X}, it's clear that, to prove that $Y(t)$ is of class $\mc{C}^\veps$, we
need a control on the norm in this space of the term $\d_Yu$. To get this, we use, as very often in the sequel,
the paravector-field decomposition:
$$
\d_Yu\,=\,T_Yu\,+\,\left(\d_Y-T_Y\right)u\,,
$$
with (by lemma \ref{l:T_X})
$$
\left\|\left(\d_Y-T_Y\right)u\right\|_{\mc{C}^\veps}\,\leq\,C\,\,\wtilde{\|}Y\|_{\mc{C}^\veps}\,\|\nabla u\|_{L^\infty}\,.
$$
Moreover, for all $1\leq i\leq N$ thanks to \eqref{eq:BS-law} we can write
$$
T_Yu^i\,=\,-\,\sum_{k,j}\left(\d_k\left(-\Delta\right)^{-1}T_{Y^j}\d_j\Omega_{ik}\,-\,
\left[\d_k\left(-\Delta\right)^{-1},T_{Y^j}\d_j\right]\Omega_{ik}\right)\,.
$$
Obviously, by lemma \ref{l:T_X} again we have
$$
\left\|\d_k\left(-\Delta\right)^{-1}\sum_jT_{Y^j}\d_j\Omega_{ik}\right\|_{\mc{C}^\veps}\,\leq\,
\left\|T_Y\Omega\right\|_{\mc{C}^{\veps-1}}\,\leq\,
\|\d_Y\Omega\|_{\mc{C}^{\veps-1}}\,+\,C\,\wtilde{\|}Y\|_{\mc{C}^\veps}\,\|\Omega\|_{L^\infty}\,,
$$
while for the commutator term we use lemma \ref{l:comm}, which gives us the following control:
$$
\left\|\left[\d_k\left(-\Delta\right)^{-1},T_{Y^j}\d_j\right]\Omega_{ik}\right\|_{\mc{C}^\veps}\,\leq\,C\,\|Y\|_{\mc{C}^\veps}\,
\|\Omega\|_{L^\infty}\,.
$$
So, in the end, from the hypothesis of striated regularity for the vorticity we get that also the velocity field $u$ is more regular
along the fixed directions and
\begin{equation} \label{est:d_X-u}
 \left\|\d_Yu\right\|_{\mc{C}^\veps}\,\leq\,C\left(\|\d_Y\Omega\|_{\mc{C}^{\veps-1}}\,+\,
\wtilde{\|}Y\|_{\mc{C}^\veps}\,\|\nabla u\|_{L^\infty}\right)\,.
\end{equation}
Moreover, applying proposition \ref{p:transport} to \eqref{eq:X} and using \eqref{est:d_X-u}, \eqref{est:div-X} and Gronwall's
inequality finally give us
\begin{equation} \label{est:X_C^e}
 \wtilde{\|}Y(t)\|_{\mc{C}^\veps}\,\leq\,C\,\exp\left(c\int^t_0\|\nabla u\|_{L^\infty}\,d\tau\right)
\left(\wtilde{\|}Y_0\|_{\mc{C}^\veps}\,+\,
\int^t_0e^{-\,c\int^\tau_0\|\nabla u\|_{L^\infty}\,d\tau'}\,\left\|\d_Y\Omega\right\|_{\mc{C}^{\veps-1}}\,d\tau\right).
\end{equation}

These estimates having being established, from now on for simplicity we will consider the case of only one vector-field $X(t)$:
the generalization  to the case of a finite family is quite obvious, and where the difference is substantial, we will suggest
references for the details.

\subsubsection{Striated regularity for the density}

Now, we want to investigate propagation of striated regularity for the density. First of all, let us state a stationary lemma.
\begin{lemma} \label{l:f->Df}
 Let $f$ be a function in $\mc{C}^1_*$.
\begin{itemize}
 \item[(i)] If $\,\d_Xf\in\mc{C}^\veps$ and $\nabla f\in L^\infty$, then one has $\,\d_X\nabla f\in\mc{C}^{\veps-1}$ and
the following inequality holds:
\begin{equation} \label{est:f->Df}
\left\|\d_X\nabla f\right\|_{\mc{C}^{\veps-1}}\,\leq\,C\left(\left\|\d_Xf\right\|_{\mc{C}^\veps}\,+\,
\wtilde{\|}X\|_{\mc{C}^\veps}\left(\|f\|_{\mc{C}^1_*}\,+\,\|\nabla f\|_{L^\infty}\right)\right)\,.
\end{equation}
\item[(ii)] Conversely, if $\,\d_X\nabla f\in\mc{C}^{\veps-1}$, then $\,\d_Xf\in\mc{C}^\veps$ and one has
\begin{equation} \label{est:Df->f}
\left\|\d_Xf\right\|_{\mc{C}^\veps}\,\leq\,C\left(\,\wtilde{\|}X\|_{\mc{C}^\veps}
\left(\|f\|_{\mc{C}^1_*}\,+\,\|\nabla f\|_{L^\infty}\right)\,+\,\left\|\d_X\nabla f\right\|_{\mc{C}^{\veps-1}}\right)\,.
\end{equation}
\end{itemize}
\end{lemma}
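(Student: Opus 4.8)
The plan is to reduce both statements to a single commutator estimate together with two elementary reductions. The commutator estimate I would isolate is: for a vector-field $X$ with components and divergence in $\mc{C}^\veps$ and a function $f\in\mc{C}^1_*$ with $\nabla f\in L^\infty$, one has, for each index $j$,
$$
\bigl[\d_j,\d_X\bigr]f\,=\,\sum_k(\d_jX^k)(\d_kf)\;\in\;\mc{C}^{\veps-1}\,,\qquad
\Bigl\|\sum_k(\d_jX^k)(\d_kf)\Bigr\|_{\mc{C}^{\veps-1}}\,\leq\,C\,\wtilde{\|}X\|_{\mc{C}^\veps}\bigl(\|f\|_{\mc{C}^1_*}+\|\nabla f\|_{L^\infty}\bigr)\,,
$$
the product on the right being read in divergence form as explained below. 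Granting this, for (i) I would write $(\d_X\nabla f)_j=\d_j(\d_Xf)-\sum_k(\d_jX^k)(\d_kf)$, bound the first term in $\mc{C}^{\veps-1}$ using that $\nabla$ maps $\mc{C}^\veps$ into $\mc{C}^{\veps-1}$, and bound the second by the commutator estimate; this is precisely \eqref{est:f->Df}. For (ii) I would rearrange the same identity as $\d_j(\d_Xf)=(\d_X\nabla f)_j+\sum_k(\d_jX^k)(\d_kf)\in\mc{C}^{\veps-1}$, observe that $\d_Xf=\sum_kX^k\d_kf$ is a genuine product of $L^\infty$ functions so that $\|\d_Xf\|_{L^\infty}\leq\|X\|_{L^\infty}\|\nabla f\|_{L^\infty}$, and then conclude $\|\d_Xf\|_{\mc{C}^\veps}\leq C(\|\d_Xf\|_{L^\infty}+\|\nabla(\d_Xf)\|_{\mc{C}^{\veps-1}})$ by the Bernstein inequalities of Lemma \ref{l:bern}; this yields \eqref{est:Df->f}.

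To prove the commutator estimate I would first give the product a meaning through the divergence structure,
$$
\sum_k(\d_jX^k)(\d_kf)\,=\,\sum_k\d_k\bigl((\d_jX^k)\,f\bigr)\,-\,f\,\d_j(\div X)\,,
$$
noting that both terms on the right are well defined because $f\in\mc{C}^1_*$ (so that the remainders $R(\d_jX^k,f)$ and $R(f,\d_j\div X)$ fall in $\mc{C}^\veps$ by Proposition \ref{p:op}). Then I would run Bony's decomposition $(\d_jX^k)f=T_{\d_jX^k}f+T_f(\d_jX^k)+R(\d_jX^k,f)$ and treat the pieces: by Proposition \ref{p:op}, $T_{\d_jX^k}f$ and $R(\d_jX^k,f)$ lie in $\mc{C}^\veps$ with norm bounded by $C\|X\|_{\mc{C}^\veps}\|f\|_{\mc{C}^1_*}$, so applying $\d_k$ keeps them in $\mc{C}^{\veps-1}$. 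The only genuinely delicate term is $\sum_k\d_kT_f(\d_jX^k)$; here I would apply Leibniz's rule on each dyadic block $S_{\nu-1}f\,\Delta_\nu\d_jX^k$ and sum over $k$ and $\nu$ to get the identity
$$
\sum_k\d_kT_f(\d_jX^k)\,=\,\sum_kT_{\d_kf}(\d_jX^k)\,+\,T_f\bigl(\d_j(\div X)\bigr)\,,
$$
after which Proposition \ref{p:op} controls the first sum in $\mc{C}^{\veps-1}$ using $\d_kf\in L^\infty$ (bound $C\|\nabla f\|_{L^\infty}\|X\|_{\mc{C}^\veps}$) and the second using $\div X\in\mc{C}^\veps$ (bound $C\|f\|_{\mc{C}^1_*}\|\div X\|_{\mc{C}^\veps}$). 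Finally $f\,\d_j(\div X)$ is handled by the same Bony splitting, the term $T_f(\d_j\div X)$ cancelling the one just produced and leaving only $\mc{C}^{\veps-1}$ contributions; collecting everything yields the claimed bound with $\wtilde{\|}X\|_{\mc{C}^\veps}=\|X\|_{\mc{C}^\veps}+\|\div X\|_{\mc{C}^\veps}$.

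The main obstacle is exactly this borderline product $\sum_k(\d_jX^k)(\d_kf)$: since $\d_jX^k\in\mc{C}^{\veps-1}$ and $\d_kf\in\mc{C}^0_*$ have total regularity $\veps-1<0$, it cannot be estimated factor by factor — the remainder $R(\d_jX^k,\d_kf)$ is a priori ill defined — and the cure is to move one derivative onto $f$ via the divergence identity, paying the ``good'' term $f\,\d_j(\div X)$, which is precisely why $\div X\in\mc{C}^\veps$ enters the hypotheses and why the combined norm $\wtilde{\|}X\|_{\mc{C}^\veps}$ shows up, and then to recover the lost derivative on the paraproduct piece through the Littlewood-Paley Leibniz rule, using the extra assumption $\nabla f\in L^\infty$. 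The passage from one vector-field to a finite non-degenerate family is immediate, by summing the estimates over $\lambda$.
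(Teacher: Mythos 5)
Your proof is correct, and it takes a genuinely different route from the paper's. The paper's proof of (i) is built on the paravector-field calculus: it splits $\d_X\nabla f=(\d_X-T_X)\nabla f+T_X\nabla f$, controls the first piece with Lemma \ref{l:T_X}, and then writes $T_X\nabla f=\nabla(T_Xf)+[T_X,\nabla]f$, using Lemma \ref{l:comm} for the commutator; for (ii) it introduces the degree-$(-1)$ Fourier multiplier $\Psi$ (the ``inverse gradient'') and writes $T_Xf=\Psi T_X\nabla f+[T_X,\Psi]\nabla f$. You instead work with the plain Leibniz commutator $[\d_j,\d_X]f=\sum_k(\d_jX^k)\d_kf$, give this borderline product a meaning by the divergence identity $\sum_k(\d_jX^k)\d_kf=\sum_k\d_k\bigl((\d_jX^k)f\bigr)-f\,\d_j\div X$, and then distribute $\d_k$ through the paraproduct blocks of $T_f(\d_jX^k)$ — the Littlewood--Paley Leibniz identity $\sum_k\d_kT_f(\d_jX^k)=\sum_kT_{\d_kf}(\d_jX^k)+T_f(\d_j\div X)$ — so that the ill-defined remainder $R(\d_jX^k,\d_kf)$ never appears and the dangerous piece $T_f(\d_j\div X)$ cancels against the Bony expansion of $f\,\d_j\div X$. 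Part (ii) then follows cheaply from the observation that $\d_Xf\in L^\infty$ as a product of bounded functions together with the Bernstein characterization $\|g\|_{\mc{C}^\veps}\lesssim\|g\|_{L^\infty}+\|\nabla g\|_{\mc{C}^{\veps-1}}$, entirely avoiding the operator $\Psi$. Your route is more elementary and self-contained — it needs only Proposition \ref{p:op}, Lemma \ref{l:bern}, and Bony's decomposition, and in fact re-derives the relevant special cases of Lemma \ref{l:T_X} in passing — whereas the paper's version plugs directly into the $T_X$ calculus that is used pervasively in Section \ref{s:striated}, which is the reason it is phrased that way. Both give the same final estimate with the same appearance of $\wtilde{\|}X\|_{\mc{C}^\veps}$ and $\|\nabla f\|_{L^\infty}$, and your explanation of \emph{why} $\div X\in\mc{C}^\veps$ must enter (to pay for the integration by parts that renders the product well defined) is accurate and arguably more transparent than in the original.
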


\begin{proof}
\begin{itemize}
 \item[(i)] Using the paravector-field operator (remember definition \ref{d:pvec-f}), we can write:
$$
 \d_X\,\nabla f\,=\,\left(\d_X\,-\,T_X\right)\nabla f\,+\,T_X\,\nabla f\,.
$$
From lemma \ref{l:T_X}, we have that the first term of the previous equality is in $\mc{C}^{\veps-1}$ and
\begin{equation} \label{est:d-T_D}
 \left\|\left(\d_X-T_X\right)\nabla f\right\|_{\mc{C}^{\veps-1}}\,\leq\,
C\,\wtilde{\|}X\|_{\mc{C}^\veps}\,\|\nabla f\|_{L^\infty}\,.
\end{equation}
Now, we have to estimate the paravector-field term: note that
$$
T_X\,\nabla f\,=\,\nabla\left(T_Xf\right)\,+\,\left[T_X,\nabla\right]f\,.
$$
From the hypothesis, it's obvious that $\nabla\left(T_Xf\right)\,\in\mc{C}^{\veps-1}$. For the last term,
remembering that $\nabla$ and $T_X$ are operators of order $1$, we can use lemma \ref{l:comm} and get
\begin{equation} \label{est:T_D}
 \left\|\left[T_X,\nabla\right]f\right\|_{\mc{C}^{\veps-1}}\,\leq\,C\,\|X\|_{\mc{C}^\veps}\,\|f\|_{\mc{C}^1_*}\,.
\end{equation}
Putting together \eqref{est:d-T_D}, \eqref{est:T_D} and the control for $\left\|\nabla\left(T_Xf\right)\right\|_{\mc{C}^{\veps-1}}$
gives us the first part of the lemma.
\item[(ii)] For the second part, we write once again $\d_Xf\,=\,T_Xf\,+\,\left(\d_X-T_X\right)f$. \\
By definition of the space $\mc{C}^{\veps}_X$, we know that $\nabla f$ is bounded: so, the latter term can be easily controlled
in $\mc{C}^\veps$ thanks to lemma \ref{l:T_X}. Now let us define the operator $\Psi$ such that, in Fourier variables,
for all vector-fields $v$ we have
$$
\mc{F}_x\left(\Psi v\right)(\xi)\,=\,-\,i\,\frac{1}{|\xi|^2}\,\xi\cdot\what{v}(\xi)\,.
$$
So, noting that the paravector term involves only the high frequencies of $f$, we can write
$$
T_Xf\,=\,T_X\left(\Psi\,\nabla f\right)\,=\,\Psi\,T_X\nabla f\,+\,
\left[T_X,\Psi\right]\nabla f\,.
$$
Now, applying lemmas \ref{l:T_X} and \ref{l:comm} completes the proof.
\end{itemize}
\end{proof}

\begin{rem} \label{r:lem_f->Df}
Let us note that, if $f\in L^a$ (for some $a\in[1,+\infty]$) is such that $\nabla f\in L^\infty$, then $f\in\mc{C}^1_*$
(indeed $f\in\mc{C}^{0,1}$) and (separating low and high frequencies)
$$
\|f\|_{\mc{C}^1_*}\,\leq\,C\,\left(\|f\|_{L^a}\,+\,\left\|\nabla f\right\|_{L^\infty}\right)\,.
$$
Both $u$ and $\rho$ satisfy such an estimate, respectively with $a=p$ and $a=+\infty$.
\end{rem}

Thanks to lemma \ref{l:f->Df}, we can equally deal with $\rho$ or $\nabla\rho$: as the equation for $\rho$ is very simple, we
choose to work with it. Keeping in mind
that $\left[X(t)\,,\,\d_t+u\cdot\nabla\right]=0$, we have
$$
\d_t\left(\d_X\rho\right)\,+\,u\cdot\nabla\left(\d_X\rho\right)\,=\,0\,,
$$
from which (remember also \eqref{est:Df->f}) it immediately follows that
\begin{equation} \label{est:d_X-rho}
 \left\|\d_{X(t)}\rho(t)\right\|_{\mc{C}^\veps}\,\leq\,
C\left(\,\wtilde{\|}X_0\|_{\mc{C}^\veps}\,\|\rho_0\|_{\mc{C}^1_*}\,+\,
\left\|\d_{X_0}\nabla\rho_0\right\|_{\mc{C}^{\veps-1}}\right)\exp\left(c\int^t_0\|\nabla u\|_{L^\infty}\,d\tau\right)\,.
\end{equation}
Therefore, keeping in mind also \eqref{est:X_C^e}, one gets also
\begin{eqnarray} \label{est:d_X-Drho}
 \left\|\d_{X(t)}\nabla\rho(t)\right\|_{\mc{C}^{\veps-1}} & \leq & C\,\exp\left(\int^t_0\|\nabla u\|_{L^\infty}\,d\tau\right)\times \\
& & \times\left(\|\rho_0\|_{\mc{C}^1_*}\,\wtilde{\|}X_0\|_{\mc{C}^\veps}\,+\,
\left\|\d_{X_0}\nabla\rho_0\right\|_{\mc{C}^{\veps-1}}\,+\right. \nonumber \\
& & \qquad\qquad\qquad\qquad\qquad\left.+\,
\int^t_0e^{-\int^\tau_0\|\nabla u\|_{L^\infty}d\tau'}\left\|\d_X\Omega\right\|_{\mc{C}^{\veps-1}}\,d\tau\right). \nonumber
\end{eqnarray}

\subsubsection{Striated regularity for the pressure term}

In this paragraph we want to show that geometric properties propagates also to the pressure term,
i.e. we want to prove $\d_X\nabla\Pi\in\mc{C}^\veps$.

Again, we use the decomposition $\;\d_X\nabla\Pi\,=\,T_X\left(\nabla\Pi\right)\,+\,\left(\d_X-T_X\right)\nabla\Pi$.

As usual, lemma \ref{l:T_X} gives us
$$
\left\|\left(\d_X\,-\,T_X\right)\nabla\Pi\right\|_{\mc{C}^\veps}\,\leq\,
C\,\wtilde{\|}X\|_{\mc{C}^\veps}\,\left\|\nabla^2\Pi\right\|_{L^\infty}\,.
$$
Now we use estimate \eqref{est:D^2-Pi}, the fact that $\log(e+\zeta)\leq e+\zeta^{1/2}$ and Young's inequality
to isolate the term $\left\|\d_X\nabla\Pi\right\|_{\mc{C}^\veps}$. As $2z\leq1+z^2$, we have
$$
\wtilde{\|}X\|^2_{\mc{C}^\veps}\,\|\nabla\Pi\|_{\mc{C}^1_*}\,\leq\,C\left(
\wtilde{\|}X\|_{\mc{C}^\veps}\,\|\nabla\Pi\|_{\mc{C}^1_*}\,+\,\wtilde{\|}X\|^3_{\mc{C}^\veps}\,\|\nabla\Pi\|_{\mc{C}^1_*}\right)\,,
$$
and finally we can control $\left\|\left(\d_X\,-\,T_X\right)\nabla\Pi\right\|_{\mc{C}^\veps}$ by the quantity
\begin{equation} \label{est:d_X-T_X-Pi}
C\left(\|\rho\|_{W^{1,\infty}}\,\wtilde{\|}X\|_{\mc{C}^\veps}\,\|\nabla\Pi\|_{\mc{C}^1_*}\,+\,
\wtilde{\|}X\|_{\mc{C}^\veps}\,\left\|\nabla u\right\|^2_{L^\infty}\,+\,
\wtilde{\|}X\|^3_{\mc{C}^\veps}\,\|\nabla\Pi\|_{\mc{C}^1_*}\right)+\,
\frac{1}{2}\,\left\|\d_X\nabla\Pi\right\|_{\mc{C}^\veps}.
\end{equation}

To deal with the paravector term, we keep in mind that $\nabla\Pi\,=\,\nabla\left(-\Delta\right)^{-1}\left(g_1\,+\,g_2\right)$,
where we have set
$$
g_1\,=\,-\,\nabla\left(\log\rho\right)\cdot\nabla\Pi \qquad\mbox{ and }\qquad
g_2\,=\,\rho\,\,\div(u\cdot\nabla u)\,.
$$
So it's enough to prove that both $T_X\nabla\left(-\Delta\right)^{-1}g_1$ and
$T_X\nabla\left(-\Delta\right)^{-1}g_2$ belong to $\mc{C}^\veps$.

Let us consider first the term
\begin{equation} \label{eq:T_X-Op}
T_X\nabla\left(-\Delta\right)^{-1}g_2\,=\,\nabla\left(-\Delta\right)^{-1}T_Xg_2\,+\,
\left[T_X,\nabla\left(-\Delta\right)^{-1}\right]g_2\,.
\end{equation}
From lemma \ref{l:comm} one immediately gets that
\begin{equation} \label{est:g_2-comm}
\left\|\left[T_X,\nabla\left(-\Delta\right)^{-1}\right]g_2\right\|_{\mc{C}^\veps}\,\leq\,C\,\wtilde{\|}X\|_{\mc{C}^\veps}\,
\|g_2\|_{\mc{C}^0_*}\,\leq\,C\,\rho^*\,\wtilde{\|}X\|_{\mc{C}^\veps}\,\|\nabla u\|^2_{L^\infty}\,,
\end{equation}
while it's obvious that
$$
\left\|\nabla\left(-\Delta\right)^{-1}T_Xg_2\right\|_{\mc{C}^\veps}\,\leq\,C\,\left\|T_Xg_2\right\|_{\mc{C}^{\veps-1}}\,.
$$
Now we use Bony's paraproduct decomposition and write
$$
T_Xg_2\,=\,T_XT_\rho\div(u\cdot\nabla u)\,+\,T_XT_{\div(u\cdot\nabla u)}\rho\,+\,T_XR\left(\rho,\div(u\cdot\nabla u)\right)\,.
$$
From proposition \ref{p:op} and the equality $\div(u\cdot\nabla u)=\nabla u:\nabla u$, it follows that
\begin{equation} \label{est:g_2-pp1}
\left\|T_XT_{\div(u\cdot\nabla u)}\rho\right\|_{\mc{C}^{\veps-1}}\,\leq\,C\,\|X\|_{L^\infty}\,
\left\|T_{\div(u\cdot\nabla u)}\rho\right\|_{\mc{C}^\veps}\,\leq\,C\,\wtilde{\|}X\|_{\mc{C}^\veps}\,\|\rho\|_{\mc{C}^1_*}\,
\|\nabla u\|^2_{L^\infty}\,,
\end{equation}
and the same estimate holds true for the remainder term $T_XR\left(\rho,\div(u\cdot\nabla u)\right)$. Lemma \ref{l:pvec-pprod},
instead, provides a control for $\left\|T_XT_\rho\div(u\cdot\nabla u)\right\|_{\mc{C}^{\veps-1}}$ by
(up to multiplication by a constant)
$$
\|X\|_{\mc{C}^\veps}\,\|\rho\|_{\mc{C}^1_*}\,\|\nabla u\|^2_{L^\infty}\,+\,\|\nabla u\|^2_{L^\infty}\,
\|T_X\rho\|_{\mc{C}^{\veps-1}}\,+\,
\|\rho\|_{\mc{C}^1_*}\,\|T_X\div(u\cdot\nabla u)\|_{\mc{C}^{\veps-1}}\,,
$$
where $\|T_X\rho\|_{\mc{C}^{\veps-1}}\leq C \|X\|_{\mc{C}^\veps}\,\|\rho\|_{\mc{C}^1_*}$ by proposition \ref{p:op}.
Now the problem is the control of the $\mc{C}^{\veps-1}$ norm of $T_X\div(u\cdot\nabla u)$. Writing
\begin{eqnarray*}
 T_X\div(u\cdot\nabla u) & = & \sum_{i,j}\,2\,T_XT_{\d_iu^j}\d_ju^i\,+\,T_X\d_iR(u^j,\d_ju^i) \\
& = & \sum_{i,j,k}\,2\,T_{X^k}\d_kT_{\d_iu^j}\d_ju^i\,+\d_iT_{X^k}\d_kR(u^j,\d_ju^i)\,-\,
T_{\d_iX^k}\d_kR(u^j,\d_ju^i)\,,
\end{eqnarray*}
by use of lemma \ref{l:pvec-pprod} we can easily see that it's bounded by
$$
\wtilde{\|}X\|_{\mc{C}^\veps}\,\|u\|_{\mc{C}^1_*}\,\|\nabla u\|_{L^\infty}\,+\,
\|u\|_{\mc{C}^1_*}\,\left\|T_X\nabla u\right\|_{\mc{C}^{\veps-1}}\,+\,
\|\nabla u\|_{L^\infty}\,\left\|T_Xu\right\|_{\mc{C}^\veps}\,.
$$
Hence, keeping in mind lemmas \ref{l:T_X} and \ref{l:f->Df}, we discover
$$
\left\|T_X\div(u\cdot\nabla u)\right\|_{\mc{C}^{\veps-1}}\,\leq\,C\left(
\,\wtilde{\|}X\|_{\mc{C}^\veps}\,\|u\|^2_{\mc{C}^1_*}\,+\,
\|\d_Xu\|_{\mc{C}^\veps}\,\|u\|_{\mc{C}^1_*}\right)\,,
$$
and therefore
\begin{equation} \label{est:g_2-pp2}
 \left\|T_XT_\rho\div(u\cdot\nabla u)\right\|_{\mc{C}^{\veps-1}}\,\leq\,C\left(\wtilde{\|}X\|_{\mc{C}^\veps}\,
\|\rho\|_{\mc{C}^1_*}\,\|u\|^2_{\mc{C}^1_*}\,+\,\|\rho\|_{\mc{C}^1_*}\,\|u\|_{\mc{C}^1_*}\,\|\d_Xu\|_{\mc{C}^\veps}\right).
\end{equation}
Putting inequalities \eqref{est:g_2-comm}, \eqref{est:g_2-pp1} and \eqref{est:g_2-pp2} all together, we finally get
\begin{equation} \label{est:g_2}
 \left\|T_X\nabla\left(-\Delta\right)^{-1}g_2\right\|_{\mc{C}^\veps}\,\leq\,C\left(\wtilde{\|}X\|_{\mc{C}^\veps}\,
\|\rho\|_{\mc{C}^1_*}\,\|u\|^2_{\mc{C}^1_*}\,+\,\|\rho\|_{\mc{C}^1_*}\,\|u\|_{\mc{C}^1_*}\,\|\d_Xu\|_{\mc{C}^\veps}\right),
\end{equation}
for some constant $C$ which depends also on $\rho^*$ and $\rho_*$.

Before going on, let us state a simple lemma.
\begin{lemma} \label{l:d_X-F}
Fix a $\veps\in\,]0,1[$ and an open interval $I\subset\R$. \\
Let $X$ be a $\mc{C}^\veps$ vector-field with divergence in $\mc{C}^\veps$ and $F:I\ra\R$ be a smooth function.

Then, for all compact set $J\subset I$ and all $\rho\in W^{1,\infty}$ valued in $J$ and such that $\d_X\rho\in\mc{C}^\veps$, one has that
$\d_X(F\circ\rho)\,\in\,\mc{C}^\veps$ and $\d_X\nabla(F\circ\rho)\,\in\,\mc{C}^{\veps-1}$. Moreover, the following estimates hold:
\begin{eqnarray*}
 \left\|\d_X(F\circ\rho)\right\|_{\mc{C}^\veps} & \leq & C\,\|\rho\|_{W^{1,\infty}}\,\left\|\d_X\rho\right\|_{\mc{C}^\veps} \\
\left\|\d_X\nabla(F\circ\rho)\right\|_{\mc{C}^{\veps-1}} & \leq & C\,\|\rho\|_{W^{1,\infty}}\,
\left(\left\|\d_X\rho\right\|_{\mc{C}^\veps}\,+\,\wtilde{\|}X\|_{\mc{C}^\veps}\,\|\rho\|_{W^{1,\infty}}\right)\,,
\end{eqnarray*}
for a constant $C$ depending only on $F$ and on the fixed subset $J$.
\end{lemma}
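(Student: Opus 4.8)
The plan is to base everything on the chain rule $\d_X(F\circ\rho)=(F'\circ\rho)\,\d_X\rho$, which is legitimate here because $\rho\in W^{1,\infty}$ is Lipschitz — so that $\d_X\rho=\sum_iX^i\,\d_i\rho$ is a genuine $L^\infty$ function — and $F$ is smooth in a neighbourhood of the compact set $J$ in which $\rho$ takes its values; one then reduces the two claims to the tame product estimate and to the composition results already at our disposal.

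\textbf{First estimate.} I would first apply Proposition \ref{p:comp}(i) to the smooth function $F'$, the compact $J\subset I$ and $s=\veps$: since $\rho$ is valued in $J$ and $\nabla\rho\in L^\infty\hookrightarrow\mc{C}^0_*\hookrightarrow\mc{C}^{\veps-1}$, this gives $\nabla(F'\circ\rho)\in\mc{C}^{\veps-1}$ with
$$
\left\|\nabla(F'\circ\rho)\right\|_{\mc{C}^{\veps-1}}\,\leq\,C\,\|\nabla\rho\|_{L^\infty}\,,\qquad \left\|F'\circ\rho\right\|_{L^\infty}\,\leq\,\sup_J|F'|\,,
$$
the constants depending only on $F$ and $J$. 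Then Corollary \ref{c:op}, applied to the product $(F'\circ\rho)\,\d_X\rho$ with $s=\veps$, yields
$$
\left\|\d_X(F\circ\rho)\right\|_{\mc{C}^\veps}\,\leq\,C\left(\left\|F'\circ\rho\right\|_{L^\infty}\left\|\d_X\rho\right\|_{\mc{C}^\veps}\,+\,\left\|\d_X\rho\right\|_{L^\infty}\left\|\nabla(F'\circ\rho)\right\|_{\mc{C}^{\veps-1}}\right)\,,
$$
and bounding $\|\d_X\rho\|_{L^\infty}\leq C\|\d_X\rho\|_{\mc{C}^\veps}$ (valid since $\veps>0$) together with $\|\nabla\rho\|_{L^\infty}\leq\|\rho\|_{W^{1,\infty}}$ gives the first displayed inequality.

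\textbf{Second estimate.} Here the idea is to feed $f:=F\circ\rho$ into Lemma \ref{l:f->Df}(i), whose hypotheses are all met: $F\circ\rho\in W^{1,\infty}\hookrightarrow B^1_{\infty,\infty}=\mc{C}^1_*$ with $\|F\circ\rho\|_{\mc{C}^1_*}\leq C\,\|\rho\|_{W^{1,\infty}}$; its gradient $\nabla(F\circ\rho)=(F'\circ\rho)\nabla\rho$ lies in $L^\infty$ with $\|\nabla(F\circ\rho)\|_{L^\infty}\leq C\|\nabla\rho\|_{L^\infty}$; and $\d_X(F\circ\rho)\in\mc{C}^\veps$ by the first part. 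Inserting these into \eqref{est:f->Df} and using the first estimate to handle the term $\|\d_X(F\circ\rho)\|_{\mc{C}^\veps}$ produces exactly the second displayed bound.

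The computation is essentially routine; the two points deserving attention are that Proposition \ref{p:comp}(i) does apply even though we only know $\nabla\rho\in L^\infty$ (for $s=\veps<1$ it suffices that $\nabla\rho\in\mc{C}^{\veps-1}$, and $L^\infty\hookrightarrow\mc{C}^{\veps-1}$), and that one should keep an eye on lower-order constants: several intermediate bounds carry an additive term (e.g.\ $\|F\circ\rho\|_{L^\infty}\leq\sup_J|F|$), which is harmlessly absorbed into the factor $\|\rho\|_{W^{1,\infty}}$ appearing in the statement, as $\rho$ remains bounded away from $0$ in all the situations where this lemma is used.
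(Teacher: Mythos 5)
Your proof is correct, and the second half takes a genuinely different route from the paper's. For the first displayed estimate the strategies are essentially the same (chain rule, composition estimate, tame product bound); you invoke Corollary~\ref{c:op} explicitly where the paper simply appeals to the algebra property of $\mc{C}^\veps$ together with $\left\|F'(\rho)\right\|_{\mc{C}^\veps}\leq C\|\rho\|_{\mc{C}^\veps}$, but these amount to the same thing. The divergence occurs in the second estimate: you apply Lemma~\ref{l:f->Df}(i) \emph{directly} to the composed function $f=F\circ\rho$, whereas the paper first expands $\d_X\nabla(F\circ\rho)=F'(\rho)\,\d_X\nabla\rho+F''(\rho)\,\d_X\rho\,\nabla\rho$ by Leibniz, estimates the $L^\infty$ tail term trivially, uses the tame estimate on $F'(\rho)\,\d_X\nabla\rho$, and only then invokes Lemma~\ref{l:f->Df}(i) applied to $\rho$ itself to control $\|\d_X\nabla\rho\|_{\mc{C}^{\veps-1}}$. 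Your route is a touch shorter and avoids the intermediate Leibniz expansion, at the cost of needing $\|F\circ\rho\|_{\mc{C}^1_*}$ and $\|\nabla(F\circ\rho)\|_{L^\infty}$ bounds (both immediate). It actually lands you on a slightly sharper right-hand side, $C\|\rho\|_{W^{1,\infty}}\bigl(\|\d_X\rho\|_{\mc{C}^\veps}+\wtilde{\|}X\|_{\mc{C}^\veps}\bigr)$, from which the stated bound follows only after absorbing a factor $\|\rho\|_{W^{1,\infty}}$ — legitimate under the standing hypothesis that $\rho$ is valued in a fixed compact $J$ bounded away from $0$, as you note. This ``absorbing lower-order constants into $\|\rho\|_{W^{1,\infty}}$'' is exactly as implicit in the paper's own proof (e.g.\ in $\left\|F'(\rho)\right\|_{\mc{C}^\veps}\leq C\|\rho\|_{W^{1,\infty}}$, where $\sup_J|F'|$ is silently absorbed), so you are on equal footing there; flagging it explicitly, as you did, is good practice.
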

 \begin{proof}
  The first inequality is immediate keeping in mind the identity $\d_X(F\circ\rho)\,=\,F'(\rho)\,\d_X\rho$ and the estimate
$$
\left\|F'(\rho)\right\|_{\mc{C}^\veps}\,\leq\,C\,\left\|F''\right\|_{L^\infty(J)}\,\|\rho\|_{\mc{C}^\veps}\,\leq\,
C\,\left\|F''\right\|_{L^\infty(J)}\,\|\rho\|_{W^{1,\infty}}\,.
$$
For the second one, we write:
$$
\d_X\nabla(F\circ\rho)\,=\,\d_X\left(F'(\rho)\,\nabla\rho\right)\,=\,F'(\rho)\,\d_X\nabla\rho\,+\,F''(\rho)\,\,\d_X\rho\,\nabla\rho\,.
$$
Let us observe that the first term is well-defined in $\mc{C}^{\veps-1}$, and using decomposition in paraproducts and remainder
operators, we have
$$
\left\|F'(\rho)\,\d_X\nabla\rho\right\|_{\mc{C}^{\veps-1}}\,\leq\,C\,\left\|F'(\rho)\right\|_{W^{1,\infty}}\,
\left\|\d_X\nabla\rho\right\|_{\mc{C}^{\veps-1}}\,.
$$
Now, the thesis immediately follows from lemma \ref{l:f->Df}.
 \end{proof}

Let us come back to $g_1$: using the same trick as in \eqref{eq:T_X-Op}, it's enough to estimate
$$
\left\|T_Xg_1\right\|_{\mc{C}^{\veps-1}}\qquad\mbox{ and }\qquad
\left\|\left[T_X,\nabla\left(-\Delta\right)^{-1}\right]g_1\right\|_{\mc{C}^\veps}\,.
$$
Again, the control of the commutator term follows from lemma \ref{l:comm}:
\begin{equation} \label{est:g_1-comm}
\left\|\left[T_X,\nabla\left(-\Delta\right)^{-1}\right]g_1\right\|_{\mc{C}^\veps}\,\leq\,
C\,\wtilde{\|}X\|_{\mc{C}^\veps}\,\|g_1\|_{\mc{C}^0_*}\,\leq\,
\frac{C}{\rho_*}\,\wtilde{\|}X\|_{\mc{C}^\veps}\,\|\nabla\rho\|_{L^\infty}\,\|\nabla\Pi\|_{\mc{C}^1_*}\,.
\end{equation}
For the other term, we use again Bony's paraproduct decomposition:
$$
T_Xg_1\,=\,T_XT_{\nabla(\log\rho)}\nabla\Pi\,+\,T_XT_{\nabla\Pi}\nabla(\log\rho)\,+\,T_XR(\nabla(\log\rho),\nabla\Pi)\,.
$$
Thanks to proposition \ref{p:op} we immediately find
\begin{equation} \label{est:g_1-pp1}
\left\|T_XT_{\nabla(\log\rho)}\nabla\Pi\right\|_{\mc{C}^{\veps-1}}\,\leq\,C\,\wtilde{\|}X\|_{\mc{C}^\veps}\,
\|\nabla\rho\|_{L^\infty}\,\|\nabla\Pi\|_{\mc{C}^1_*}\,,
\end{equation}
and the same control holds true also for the remainder. Moreover, a direct application of lemma \ref{l:pvec-pprod} implies
\begin{equation} \label{est:g_1-pp2}
\left\|T_XT_{\nabla(\log\rho)}\nabla\Pi\right\|_{\mc{C}^{\veps-1}}\,\leq\,C\left(\wtilde{\|}X\|_{\mc{C}^\veps}\,
\|\nabla\rho\|_{L^\infty}\,\|\nabla\Pi\|_{\mc{C}^1_*}\,+\,
\|\nabla\Pi\|_{\mc{C}^1_*}\,\left\|T_X\nabla(\log\rho)\right\|_{\mc{C}^{\veps-1}}\right),
\end{equation}
Now, from lemmas \ref{l:T_X} and \ref{l:d_X-F} we easily get
$$
\|T_X\nabla(\log\rho)\|_{\mc{C}^{\veps-1}}\,\leq\,C\left(\|\d_X\rho\|_{\mc{C}^\veps}\,\|\rho\|_{W^{1,\infty}}\,+\,
\wtilde{\|}X\|_{\mc{C}^\veps}\,\|\rho\|^2_{W^{1,\infty}}\right)\,.
$$
Putting this last relation into \eqref{est:g_1-pp2} and keeping in mind inequalities \eqref{est:g_1-comm} and \eqref{est:g_1-pp1},
we find
\begin{equation} \label{est:g_1}
 \left\|T_X\nabla\left(-\Delta\right)^{-1}g_1\right\|_{\mc{C}^\veps}\,\leq\,C\left(
\|\d_X\rho\|_{\mc{C}^\veps}\,\|\rho\|_{W^{1,\infty}}\,\|\nabla\Pi\|_{\mc{C}^1_*}\,+\,
\wtilde{\|}X\|_{\mc{C}^\veps}\,\|\rho\|^2_{W^{1,\infty}}\,\|\nabla\Pi\|_{\mc{C}^1_*}\right),
\end{equation}
where, as before, $C$ may depend also on $\rho^*$ and $\rho_*$.

Therefore, putting \eqref{est:d_X-T_X-Pi}, \eqref{est:g_2} and \eqref{est:g_1} together, we finally get
\begin{eqnarray}
 \left\|\d_X\nabla\Pi\right\|_{\mc{C}^\veps} & \leq & C\biggl(\|\rho\|_{W^{1,\infty}}
\left\|\d_X\rho\right\|_{\mc{C}^\veps}\|\nabla\Pi\|_{\mc{C}^1_*}\,+\,
\|\nabla\Pi\|_{\mc{C}^1_*}\wtilde{\|}X\|_{\mc{C}^\veps}\|\rho\|^2_{W^{1,\infty}}\,+ \label{est:d_X-Pi} \\
& & \quad\quad+\,\wtilde{\|}X\|^3_{\mc{C}^\veps}\|\nabla\Pi\|_{\mc{C}^1_*}\,+\,
\|\rho\|_{W^{1,\infty}}\wtilde{\|}X\|_{\mc{C}^\veps}\|u\|^2_{\mc{C}^1_*}\,+\,
\|\rho\|_{\mc{C}^1_*}\|u\|_{\mc{C}^1_*}\left\|\d_Xu\right\|_{\mc{C}^\veps}\biggr). \nonumber
\end{eqnarray}

\subsubsection{Conservation of striated regularity for the vorticity}

Let us now establish a control on $\|\d_X\Omega\|_{\mc{C}^{\veps-1}}$.
Applying the operator $\d_X$ to \eqref{eq:vort}, we obtain the evolution equation for $\d_X\Omega$:
\begin{equation} \label{eq:vort_str}
 \d_t\left(\d_X\Omega\right)\,+\,u\cdot\nabla\left(\d_X\Omega\right)\,=\,
\d_X\left(\frac{1}{\rho^2}\,\nabla\rho\wedge\nabla\Pi\right)\,-\,
\d_X\left(\Omega\cdot\nabla u\right)\,-\,\d_X\left(^t\nabla u\cdot\Omega\right)\,.
\end{equation}

The second and third terms of the right-hand side of \eqref{eq:vort_str} can be treated using once again the decomposition
$$
\d_X\left(\Omega\cdot\nabla u\,+\,^t\nabla u\cdot\Omega\right)\,=\,
\left(\d_X\,-\,T_X\right)\left(\Omega\cdot\nabla u\,+\,^t\nabla u\cdot\Omega\right)\,+\,
T_X\left(\Omega\cdot\nabla u\,+\,^t\nabla u\cdot\Omega\right)\,.
$$
Lemma \ref{l:T_X} says that the operator $\d_X-T_X$ maps $\mc{C}^0_*$ in $\mc{C}^{\veps-1}$ continuously; as
$L^\infty\hookrightarrow\mc{C}^0_*$, one has
$$
 \left\|\left(\d_X\,-\,T_X\right)\left(\Omega\cdot\nabla u\,+\,^t\nabla u\cdot\Omega\right)\right\|_{\mc{C}^{\veps-1}}\,\leq\,C\,
\wtilde{\|}X\|_{\mc{C}^\veps}\,\|\Omega\|_{L^\infty}\,\|\nabla u\|_{L^\infty}\,.
$$
To handle the paravector term, we proceed in the following way. First of all, we note that, as $\div u=0$, we can write
$$
\left(\Omega\cdot\nabla u\,+\,^t\nabla u\cdot\Omega\right)_{ij}\,=\,\sum_k
\left(\d_iu^k\,\d_ku^j\,-\,\d_ju^k\,\d_ku^i\right)\,=\,
\sum_k\left(\d_k\left(u^j\,\d_iu^k\right)\,-\,\d_k\left(u^i\,\d_ju^k\right)\right)\,.
$$
So, we have to estimate the $\mc{C}^{\veps-1}$ norm of terms of the type $\,T_XT_{\nabla u}\nabla u\,$ and
$\,T_X\nabla R(u,\nabla u)\,$. For the former we apply directly lemma \ref{l:pvec-pprod}, while for the latter
we first use the same trick as in \eqref{eq:T_X-Op} and then lemmas
\ref{l:pvec-pprod} and \ref{l:comm}:
\begin{eqnarray*}
 \left\|T_XT_{\nabla u}\nabla u\right\|_{\mc{C}^{\veps-1}} & \leq & C\left(
\wtilde{\|}X\|_{\mc{C}^\veps}\,\|\nabla u\|^2_{L^\infty}\,+\,\|\nabla u\|_{L^\infty}\,\|T_X\nabla u\|_{\mc{C}^{\veps-1}}\right)\\
\left\|T_X\nabla R(u,\nabla u)\right\|_{\mc{C}^{\veps-1}} & \leq &
C\left(\wtilde{\|}X\|_{\mc{C}^\veps}\|u\|_{\mc{C}^1_*}\|\nabla u\|_{L^\infty}+
\|u\|_{\mc{C}^1_*}\|T_X\nabla u\|_{\mc{C}^{\veps-1}}+\|\nabla u\|_{L^\infty}\|T_Xu\|_{\mc{C}^\veps}\right).
\end{eqnarray*}
Hence, from lemmas \ref{l:T_X} and \ref{l:f->Df} it easily follows
\begin{equation} \label{est:Om-Du}
 \left\|\d_X\left(\Omega\cdot\nabla u\,+\,^t\nabla u\cdot\Omega\right)\right\|_{\mc{C}^{\veps-1}}\,\leq\,C\left(\,
\wtilde{\|}X\|_{\mc{C}^\veps}\,\|u\|^2_{\mc{L}^{p,\infty}}\,+\,
\left\|\d_Xu\right\|_{\mc{C}^\veps}\,\|u\|_{\mc{L}^{p,\infty}}\right).
\end{equation}

Now, let us analyse the first term of \eqref{eq:vort_str}. It can be written as the sum of three items:
$$
\d_X\left(\frac{1}{\rho^2}\,\nabla\rho\wedge\nabla\Pi\right)\,=\,-\,
\frac{2}{\rho^3}\left(\d_X\rho\right)\left(\nabla\rho\wedge\nabla\Pi\right)\,+\,
\frac{1}{\rho^2}\,\left(\d_X\nabla\rho\right)\wedge\nabla\Pi\,+\,\frac{1}{\rho^2}\,\nabla\rho\wedge\left(\d_X\nabla\Pi\right)\,.
$$
So, let us consider each one separately and prove that it belongs to the space $\mc{C}^{\veps-1}$.

First and third terms are in fact in $L^\infty\hookrightarrow\mc{C}^{\veps-1}$, satisfy
\begin{eqnarray*}
\left\|\frac{1}{\rho^3}\left(\d_X\rho\right)\left(\nabla\rho\wedge\nabla\Pi\right)\right\|_{\mc{C}^{\veps-1}} & \leq &  
\frac{C}{\left(\rho_*\right)^3}\,\|\d_X\rho\|_{\mc{C}^\veps}\,\|\nabla\rho\|_{L^\infty}\,\|\nabla\Pi\|_{\mc{C}^1_*} \\
\left\|\frac{1}{\rho^2}\,\nabla\rho\wedge\left(\d_X\nabla\Pi\right)\right\|_{\mc{C}^{\veps-1}} & \leq &
\frac{C}{\left(\rho_*\right)^2}\,\left\|\nabla\rho\right\|_{L^\infty}\,\left\|\d_X\nabla\Pi\right\|_{\mc{C^\veps}}\,.
\end{eqnarray*}

Now, let us find a $\mc{C}^{\veps-1}$ control for the second term. Note that it is well-defined,
due to the fact that both $\rho$ and $\nabla\Pi$ are in $\mc{C}^1_*$ (the product of a $\mc{C}^{\sigma}$
function, $\sigma<0$, with a $L^\infty$ one is not even well-defined).
With a little abuse of notation (in the end, we have to deal with the sum of
products of components of the two vector-fields), we write
$$
\left(\d_X\nabla\rho\right)\nabla\Pi\,=\,T_{\left(\d_X\nabla\rho\right)}\nabla\Pi\,+\,T_{\nabla\Pi}\left(\d_X\nabla\rho\right)\,+\,
R\left(\d_X\nabla\rho,\nabla\Pi\right)\,;
$$
remembering proposition \ref{p:op} and the embeddings
$\mc{C}^1_*\hookrightarrow L^\infty\hookrightarrow\mc{C}^0_*$, we get
$$
\left\|\left(\d_X\nabla\rho\right)\wedge\nabla\Pi\right\|_{\mc{C}^{\veps-1}}\,\leq\,C\,\left\|\d_X\nabla\rho\right\|_{\mc{C}^{\veps-1}}\,
\left\|\nabla\Pi\right\|_{\mc{C}^1_*}\,.
$$
In the same way, as $\left\|1/\rho^2\right\|_{\mc{C}^1_*}\leq\left\|1/\rho^2\right\|_{W^{1,\infty}}$, we get
$$
\left\|\frac{1}{\rho^2}\left(\d_X\nabla\rho\right)\wedge\nabla\Pi\right\|_{\mc{C}^{\veps-1}}\,\leq\,
\frac{C}{\left(\rho_*\right)^2}
\left(1+\frac{\|\nabla\rho\|_{L^\infty}}{\rho_*}\right)\,\left\|\d_X\nabla\rho\right\|_{\mc{C}^{\veps-1}}\,
\left\|\nabla\Pi\right\|_{\mc{C}^1_*}\,.
$$

So, using also lemma \ref{l:f->Df}, we finally obtain, for a constant $C$ depending also on $\rho_*$ and $\rho^*$,
\begin{eqnarray}
 \left\|\d_X\left(\frac{1}{\rho^2}\,\nabla\rho\wedge\nabla\Pi\right)\right\|_{\mc{C}^{\veps-1}} & \leq &
C\left(\wtilde{\|}X\|_{\mc{C}^\veps}\,\|\rho\|^2_{W^{1,\infty}}\,\|\nabla\Pi\|_{\mc{C}^1_*}\,+\,
\left\|\nabla\rho\right\|_{L^\infty}\,\left\|\d_X\nabla\Pi\right\|_{\mc{C^\veps}}\,+\right. \label{est:wedge} \\
& & \;\;\quad\qquad\qquad\qquad\qquad\left.+\,
\|\rho\|_{W^{1,\infty}}\,\left\|\d_X\nabla\rho\right\|_{\mc{C}^{\veps-1}}\,
\left\|\nabla\Pi\right\|_{\mc{C}^1_*}\right). \nonumber
\end{eqnarray}

Therefore, from equation \eqref{eq:vort_str}, classical estimates for transport equation in H\"older spaces
and inequalities \eqref{est:Om-Du} and \eqref{est:wedge}, we obtain
\begin{eqnarray}
 \left\|\d_X\Omega(t)\right\|_{\mc{C}^{\veps-1}} & \leq & C\,\exp\left(c\int^t_0\|\nabla u\|_{L^\infty}\,d\tau\right)
\left(\left\|\d_{X_0}\Omega_0\right\|_{\mc{C}^{\veps-1}}\,+\,\int^t_0e^{-\int^t_0\|\nabla u\|_{L^\infty}d\tau'}\times\right.
\label{est:d_X-Om} \\
& & \quad\times\left(\,\,\wtilde{\|}X\|_{\mc{C}^\veps}\|u\|^2_{\mc{L}^{p,\infty}}\,+\,
\|\d_Xu\|_{\mc{C}^\veps}\|u\|_{\mc{L}^{p,\infty}}\,+\right. \nonumber \\
& & \quad\qquad+\,\wtilde{\|}X\|_{\mc{C}^\veps}\,\|\rho\|^2_{W^{1,\infty}}\,
\left\|\nabla\Pi\right\|_{\mc{C}^1_*}\,+\,\|\nabla\rho\|_{L^\infty}\,\left\|\d_X\nabla\Pi\right\|_{\mc{C}^\veps}\,+ \nonumber \\
& & \left.\quad\qquad\left.+\,\|\rho\|_{W^{1,\infty}}\,\left\|\nabla\Pi\right\|_{\mc{C}^1_*}\,
\left\|\d_X\nabla\rho\right\|_{\mc{C}^{\veps-1}}\right)\,d\tau\right)\,. \nonumber
\end{eqnarray}

\subsection{Final estimates} \label{ss:fin-est}

First of all, thanks to Young's inequality and estimates \eqref{est:Pi_L^2} and \eqref{est:Pi_C^1_*}, for all $\eta\in\,[0,1]$
we have
\begin{equation} \label{f-est:Pi}
\|\nabla\Pi\|^\eta_{L^2}\,\|\nabla\Pi\|^{1-\eta}_{L^\infty}\,\leq\,
\left\|\nabla\Pi\right\|_{L^2\cap\,\mc{C}^1_*}\,\leq\,C\left(
\left(1+\|\nabla\rho\|^\delta_{L^\infty}\right)\|u\|_{L^p}\,\|\Omega\|_{L^q\cap L^\infty}\,+\,\rho^*\,\|\nabla u\|^2_{L^\infty}\right).
\end{equation}
So, setting
$$
L(t)\,:=\,\|u(t)\|_{L^p}\,+\,\|\Omega(t)\|_{L^q\cap L^\infty}\,,
$$
putting \eqref{est:Drho_L^inf} and \eqref{f-est:Pi} into \eqref{est:u_L^p}, \eqref{est:Om_L^q} and \eqref{est:Om_L^inf}, for all fixed
$T>0$ we obtain, in the time interval $[0,T]$, an inequality of the form
$$
L(t)\,\leq\,C\,\exp\left(c\int^t_0\|\nabla u\|_{L^\infty}d\tau\right)\left(L(0)\,+\,
\int^t_0\left\|\nabla u\right\|^2_{L^\infty}\,d\tau\,+\,\int^t_0L^2(\tau)\,d\tau\right)\,,
$$
with constants $C$, $c$ depending only on $N$, $\veps$, $\rho_*$ and $\rho^*$. Now, if we define
\begin{equation} \label{cond-T_1}
T\,:=\,\sup\left\{t>0\,\biggl|\,\int^t_0\left(e^{-\int^\tau_0L(\tau')d\tau'}
L(\tau)+\left\|\nabla u(\tau)\right\|^2_{L^\infty}\right)d\tau\,\leq\,2L(0)\right\}\,,
\end{equation}
from previous inequality and Gronwall's lemma and applying a standard bootstrap procedure,
we manage to estimate the norms of the solution on $[0,T]$ in terms of initial data only:
$$
L(t)\,\leq\,C\,L(0)\qquad\mbox{ and }\qquad \|\rho(t)\|_{W^{1,\infty}}\,\leq\,C\,\|\rho_0\|_{W^{1,\infty}}\,.
$$
From this, keeping in mind \eqref{f-est:Pi} and \eqref{cond-T_1}, we also have
$$
 \|\nabla\Pi\|_{L^\infty_t(L^2)\cap L^1_t(\mc{C}^1_*)}\,\leq\,C\left(1\,+\,\|\nabla\rho_0\|^\delta_{L^\infty}\right)L^2(0)\,.
$$

Now, let us focus on estimates about striated regularity.
First of all, from \eqref{est:I} we get that the family $X(t)$ remains non-degenerate: $I(X(t))\,\geq\,C\,I(X_0)$.

Now, for notation convenience, let us come back to the case of only one vector-field, which we keep to call $X$, and set
$S(t):=\left\|\d_{X(t)}\Omega(t)\right\|_{\mc{C}^{\veps-1}}$. Let us note that the constants $C$ which will
occur in our estimates depend on the functional norms of the initial data, but also on the time $T$.

From \eqref{est:X_C^e} and \eqref{est:d_X-u} we find
\begin{eqnarray*}
 \wtilde{\|}X(t)\|_{\mc{C}^\veps} & \leq & C\left(\wtilde{\|}X_0\|_{\mc{C}^\veps}\,+\,\int^t_0S(\tau)\,d\tau\right) \\
\left\|\d_{X(t)}u(t)\right\|_{\mc{C}^\veps} & \leq & C\left(S(t)\,+\,\wtilde{\|}X(t)\|_{\mc{C}^\veps}\,
\|\nabla u(t)\|_{L^\infty}\right),
\end{eqnarray*}
while \eqref{est:d_X-rho} and \eqref{est:d_X-Drho} give us
$$
\left\|\d_X\rho\right\|_{\mc{C^\veps}}\,\leq\,C
\quad\mbox{ and }\quad
\left\|\d_{X(t)}\nabla\rho(t)\right\|_{\mc{C}^{\veps-1}}\,\leq\,C\left(1\,+\,\int^t_0S(\tau)\,d\tau\right).
$$

Before going on, let us notice the following fact, which is a direct consequence of the integral condition
in \eqref{cond-T_1}: for $m=1$, $2$ we have
\begin{equation} \label{f-est:S-u}
\int^t_0\left(\int^\tau_0S(\tau')d\tau'\right)\|\nabla u(\tau)\|^m_{L^\infty}\,d\tau\,\leq\,C\,\int^t_0S(\tau)\,d\tau\,.
\end{equation}
We will repeatedly use it in what follows.

Now, let us focus on $\d_X\nabla\Pi$: for convenience, we want to estimate its $L^1_t(\mc{C}^\veps)$ norm,
starting from the bound \eqref{est:d_X-Pi} and the ones we have just found. \\
First of all, we have
\begin{eqnarray*}
\int^t_0\|\rho\|_{W^{1,\infty}}\left\|\d_X\rho\right\|_{\mc{C}^\veps}\|\nabla\Pi\|_{\mc{C}^1_*}\,d\tau & \leq & C \\
\int^t_0\|\nabla\Pi\|_{\mc{C}^1_*}\wtilde{\|}X\|_{\mc{C}^\veps}\|\rho\|^2_{W^{1,\infty}}\,d\tau & \leq & 
\|\nabla\Pi\|_{L^1_t(\mc{C}^1_*)}\wtilde{\|}X\|_{L^\infty_t(\mc{C}^\veps)}\;\leq\;C\left(1+\int^t_0S(\tau)d\tau\right).
\end{eqnarray*}
Exactly in the same way, using also Jensen's inequality, we get
$$
\int^t_0\wtilde{\|}X\|^3_{\mc{C}^\veps}\|\nabla\Pi\|_{\mc{C}^1_*}\,d\tau\,\leq\,
C\|\nabla\Pi\|_{L^1_t(\mc{C}^1_*)}\wtilde{\|}X\|^3_{L^\infty_t(\mc{C}^\veps)}\,\leq\,
C\left(1+\int^t_0S^3(\tau)\,d\tau\right)\,,
$$
while, keeping in mind the definition of the $\mc{L}^{p,\infty}$ norm (see remark \ref{r:lem_f->Df}) and
inequality \eqref{f-est:S-u}, we easily find
\begin{eqnarray*}
 \int^t_0\|\rho\|_{W^{1,\infty}}\wtilde{\|}X\|_{\mc{C}^\veps}\|u\|^2_{\mc{L}^{p,\infty}}\,d\tau & \leq & 
C\left(1+\int^t_0S(\tau)\,d\tau\right) \\
\int^t_0\|\rho\|_{W^{1,\infty}}\|u\|_{\mc{L}^{p,\infty}}\left\|\d_Xu\right\|_{\mc{C}^\veps}d\tau & \leq &
C\left(1+\int^t_0S(\tau)\,d\tau+\int^t_0\|\nabla u\|_{L^\infty}\,S(\tau)\,d\tau\right).
\end{eqnarray*}
Therefore, in the end we get
$$
\left\|\d_X\nabla\Pi\right\|_{L^1_t(\mc{C}^\veps)}\,\leq\,C\left(1\,+\,\int^t_0(1+\|\nabla u\|_{L^\infty})S(\tau)\,d\tau\,+
\,\int^t_0S^3(\tau)\,d\tau\right).
$$

Finally, let us handle the term $S(t)$: from \eqref{est:d_X-Om}, we see that we have to control
the $L^1_t$ norm of $\iota$, defined by \eqref{def:iota}.
First of all, we have
$$
\int^t_0\wtilde{\|}X\|_{\mc{C}^\veps}\|u\|^2_{\mc{L}^{p,\infty}}\,d\tau\,,\quad
\int^t_0\|u\|_{\mc{L}^{p,\infty}}\left\|\d_Xu\right\|_{\mc{C}^\veps}\,d\tau\;\leq\;C\left(1+\int^t_0S(\tau)\,d\tau\right)\,:
$$
we have just analysed the same items multiplied by $\|\rho\|_{W^{1,\infty}}$, which we controlled by a constant.
Moreover, one immediately find
$$
\int^t_0\|\nabla\rho\|_{L^\infty}\left\|\d_X\nabla\Pi\right\|_{\mc{C}^\veps}\,d\tau\,\leq\,
C\,\left\|\d_X\nabla\Pi\right\|_{L^1_t(\mc{C}^\veps)}\,,
$$
while the term $\wtilde{\|}X\|_{\mc{C}^\veps}\|\rho\|^2_{W^{1,\infty}}\left\|\nabla\Pi\right\|_{\mc{C}^1_*}$ already occurred
in considering $\d_X\nabla\Pi$, and so it can be absorbed in the previous inequality.
Finally, we have
$$
\int^t_0\|\rho\|_{W^{1,\infty}}\left\|\nabla\Pi\right\|_{\mc{C}^1_*}
\left\|\d_X\nabla\rho\right\|_{\mc{C}^{\veps-1}}\,d\tau\,\leq\,C\left(1+\int^t_0S(\tau)\,d\tau\right).
$$

Putting all these inequalitites together, in the end we find the control for $S(t)$ on $[0,T]$:
$$
S(t)\,\leq\,C\left(S(0)\,+\,\int^t_0(1+\|\nabla u\|_{L^\infty})S(\tau)d\tau\,+\,\int^t_0S^3(\tau)d\tau\right).
$$
Now, suppose that $T$ was chosen so small that, in addition to \eqref{cond-T_1}, for all $t\in[0,T]$ one has also
\begin{equation} \label{cond-T_2}
 \int^t_0S^3(\tau)\,d\tau\,\leq\,2\,S(0)\,.
\end{equation}
Then Gronwall's lemma allows us to get the bound
$$
\left\|\d_{X(t)}\Omega(t)\right\|_{\mc{C}^{\veps-1}}\,\leq\,C\,S(0)\qquad\forall\,t\in[0,T]\,,
$$
for a constant $C$ depending
only on $T$, $N$, $p$, $q$, $\veps$, $\rho_*$ and $\rho^*$ and on the norms of initial data in the relative functional spaces.

Let us note that this inequality allows us to recover a uniform bound, on $[0,T]$, for $\|\nabla u\|_{L^\infty}$ and
$\|\nabla\Pi\|_{\mc{C}^1_*}$, which we previously controlled only in $L^1_t$.

\begin{rem} \label{r:T}
The lifespan $T$ of the solution is essentially determined by conditions \eqref{cond-T_1} and \eqref{cond-T_2}. In
section \ref{s:lifespan} we will establish an explicit lower bound for $T$ in terms of the norms of initial data
only and we will compare it with the classical result in the case of constant density.
\end{rem}

\subsection{Proof of the existence of a solution}

After establishing a priori estimates, we want to give the proof of the existence of a solution for system \eqref{eq:ddeuler}
under our assumptions.

We will get it in a classical way: first of all, we will construct a sequence of approximate solutions of our problem,
for which a priori estimates of the previous section hold uniformly, and
then we will show the convergence of such a sequence to a solution of (\ref{eq:ddeuler}).

Now, we will work with positive times only, but it goes without saying that the same argument holds true also for the backward
evolution.

\subsubsection{Construction of a sequence of approximate solutions}

For each $n\in\mbb{N}$, let us define $u^n_0\,:=\,S_nu_0$; obviously $u^n_0\in L^p$, and an easy computation shows that it belongs
also to the space $B^\sigma_{p,r}$ for all $\sigma\in\mbb{R}$ and all $r\in[1,+\infty]$.
Let us notice that $\bigcap_{\sigma}\!B^\sigma_{p,r}\subset\mc{C}^\infty_b$, so
in particular we have that
$u^n_0\in L^p\cap B^s_{\infty,r}$, for some fixed $s>1$ and $r\in[1,+\infty]$ such that $B^s_{\infty,r}\hookrightarrow\mc{C}^{0,1}$.

Keeping in mind that $\left[S_n,\nabla\right]=0$, we have that $\Omega^n_0\,=\,S_n\Omega_0\in L^q\cap B^{s-1}_{\infty,r}$;
in particular, from \eqref{est:CZ} we get $\nabla u^n_0\in L^q$.

Now let us take an even radial function $\theta\in\mc{C}^\infty_0(\R^N)$, supported in the unitary ball, such that
$0\leq\theta\leq1$ and $\int_{\R^N}\theta(x)\,dx=1$, and set $\theta_n(x)=n^N\,\theta(nx)$ for all $n\in\N$. We
define $\rho^n_0\,:=\,\theta_n*\rho_0$: it belongs to $B^s_{\infty,r}$ and it satisfies the bounds $0<\rho_*\leq\rho^n_0\leq\rho^*$.

Moreover, by properties of localisation operators $S_n$ and of $\theta_n$, we have:
\begin{itemize}
 \item $\rho^n_0\rightharpoonup\rho_0$ in $W^{1,\infty}$ and
$\|\nabla\rho^n_0\|_{L^\infty}\,\leq\,c\,\|\nabla\rho_0\|_{L^\infty}$;
 \item $u^n_0\rightarrow u_0$ in the space $L^p$ and $\|u^n_0\|_{L^p}\,\leq\,c\,\|u_0\|_{L^p}$;
 \item $\Omega^n_0\rightarrow\Omega_0$ in $L^q$ and $\|\Omega^n_0\|_{L^q}\,\leq\,c\,\|\Omega_0\|_{L^q}$,
$\|\Omega^n_0\|_{L^\infty}\,\leq\,c\,\|\Omega_0\|_{L^\infty}$.
\end{itemize}

So, for each $n$, theorem 3 and remark 4 of \cite{D-F} give us a unique solution of \eqref{eq:ddeuler} such that:
\begin{itemize}
 \item [(i)] $\rho^n\in\mc{C}([0,T^n];B^s_{\infty,r})$, with $0<\rho_*\leq\rho^n\leq\rho^*$;
 \item [(ii)] $u^n\in\mc{C}([0,T^n];L^p\cap B^s_{\infty,r})$, with $\Omega^n\in\mc{C}([0,T^n];L^q\cap B^{s-1}_{\infty,r})$;
 \item [(iii)] $\nabla\Pi^n\in\mc{C}([0,T^n];L^2)\cap L^1([0,T^n];B^s_{\infty,r})$.
\end{itemize}

For such a solution, a priori estimates of the previous section hold at every step $n$. Moreover, remembering previous properties
about approximated initial data, we can find a control independent of $n\in\mbb{N}$.
So, we can find a positive time $T\leq T^n$ for all $n\in\N$, such that in $[0,T]$ the approximate solutions
are all defined and satisfy uniform bounds.

\subsubsection{Convergence of the sequence of approximate solutions}

To prove convergence of the obtained sequence, we appeal to a compactness argument. Actually, we weren't able to apply the classical
method used for the homogeneous case, i.e. proving estimates in rough spaces as $\mc{C}^{-\alpha}\,\,(\alpha>0)$:
we couldn't solve the elliptic equation for the pressure term in this framework.

\medskip
We know that $\left(\rho^n\right)_{n\in\mbb{N}}\,\subset L^\infty([0,T];W^{1,\infty})$,
$\left(u^n\right)_{n\in\mbb{N}}\,\subset L^\infty([0,T];L^p)$ and
$\left(\nabla\Pi^n\right)_{n\in\mbb{N}}\,\subset L^\infty([0,T];L^2)$ and, thanks to a priori estimates, all these sequences are
bounded in the respective functional spaces.

Due to the reflexivity of $L^2$ and $L^p$ and seeing $L^\infty$ as the dual of $L^1$, up to passing to a subsequence, we obtain
the existence of functions $\rho$, $u$ and $\nabla\Pi$ such that:
\begin{itemize}
 \item $\rho^n\,\stackrel{*}{\upra}\,\rho$ in the space $L^\infty([0,T];W^{1,\infty})$,
\item $u^n\,\upra\,u$ in $L^\infty([0,T];L^p)$ and
\item $\nabla\Pi^n\,\upra\,\nabla\Pi$ in $L^\infty([0,T];L^2)$.
\end{itemize}

Nevertheless, we are not able to prove that $\left(\rho,u,\nabla\Pi\right)$ is indeed a solution of system \eqref{eq:ddeuler}:
passing to the limit in nonlinear terms requires strong convergence in (even rough) suitable functional spaces. So let us
argue in a different way and establish strong convergence properties, which will be useful also to prove
preservation of striated regularity.

First of all, let us recall that, by construction, $u^n_0\,\ra\,u_0$ in $L^p$ and $\Omega^n_0\,\ra\,\Omega_0$ in $L^q$, and
$\left(\rho^n_0\right)_n$ is bounded in $W^{1,\infty}$. So, for $\alpha>0$ big enough (for instance, take
$\alpha=\max\left\{N/p\,,N/q\right\}$), we have that $\left(\rho^n_0\right)_n$, $\left(u^n_0\right)_n$, $\left(\Omega^n_0\right)_n$
are all bounded in the space $\mc{C}^{-\alpha}$.
\begin{rem} \label{r:conv-data}
 It goes without saying that the sequences of $u^n_0$ and $\Omega^n_0$ still converge in $\mc{C}^{-\alpha}$; moreover, also
$\rho^n_0\,\ra\,\rho_0$ in this space. Remember that $\rho_0$ belongs to the space $\mc{C}^1_*$, which coincides
(see \cite{C1995} for the proof) with the
Zygmund space, i.e. the set of bounded functions $f$ for which there exists a constant $Z_f$ such that
$$
\left|f(x+y)\,+\,f(x-y)\,-\,2\,f(x)\right|\,\leq\,Z_f\,\,|y|
$$
for all $x$, $y\,\in\R^N$. So, using the symmetry of $\theta$, we can write
$$
\rho^n_0(x)\,-\,\rho_0(x)\,=\,\frac{1}{2}\,n^N\int_{\R^N}\theta(ny)
\left(\rho_0(x+y)\,+\,\rho_0(x-y)\,-\,2\,\rho_0(x)\right)dy\,;
$$
from this identity we get that $\rho^n_0\,\ra\,\rho_0$ in $L^\infty$, and hence also in $\mc{C}^{-\alpha}$.
\end{rem}

Now, let us consider the equation for $\rho^n$:
$$
\d_t\rho^n\,=\,-\,u^n\cdot\nabla\rho^n\,.
$$
From a priori estimates we get that $\left(u^n\right)_n$ is bounded in $L^\infty([0,T];\mc{C}^1_*)$ and $\left(\nabla\rho^n\right)_n$
is bounded in the space $L^\infty([0,T];L^\infty)$; so, from proposition \ref{p:op}, one has that the
sequence
$\left(\d_t\rho^n\right)_n$ is bounded in $L^\infty([0,T];\mc{C}^{-\alpha})$. Therefore $\left(\rho^n\right)_n$ is bounded in
$\mc{C}^{0,1}([0,T];\mc{C}^{-\alpha})$, and in particular uniformly equicontinuous in the time variable.

Now, up to multiply by a $\vphi\in\mc{D}(\mbb{R}^N)$ (recall theorem 2.94 of \cite{B-C-D}) and extract a
subsequence, Ascoli-Arzel\`a theorem and Cantor diagonal process ensure us that $\rho^n\,\ra\,\rho$ in the space
$\mc{C}([0,T];\mc{C}^{-\alpha}_{loc})$.

Exactly in the same way, one can show that $\left(\rho^n\right)_n$ is bounded in $\mc{C}_b([0,T]\times\R^N)$ and it converges
to $\rho$ in this space.

Finally, remembering that $\rho\,\in\,L^\infty([0,T];W^{1,\infty})$ (recall the compactness argument), by interpolation we have
convergence also in $L^\infty([0,T];\mc{C}^{1-\eta}_{loc})$ for all $\eta>0$.

\medskip
We repeat the same argument for the velocity field. For all $n$, we have
$$
\d_tu^n\,=\,-\,u^n\cdot\nabla u^n\,-\,a^n\,\nabla\Pi^n\,,
$$
where we have set $a^n\,:=\,\left(\rho^n\right)^{-1}$. Let us notice that, as $\rho_0$, $a_0:=\left(\rho_0\right)^{-1}$
satisfy the same hypothesis and $a^n$, $\rho^n$ satisfy the same equations, they have also the same properties.

Keeping this fact in mind, let us consider each term separately.
\begin{itemize}
 \item Thanks to what we have just said, $\left(a^n\right)_n\,\subset\,\mc{C}_b([0,T]\times\mbb{R}^N)\cap L^\infty([0,T];\mc{C}^1_*)$
is bounded; moreover, from a priori estimates, we see that also $\left(\nabla\Pi^n\right)_n$ is bounded in the space
$L^\infty([0,T];\mc{C}^1_*)$. Therefore, it follows that the sequence $\left(a^n\,\nabla\Pi^n\right)_n$ is bounded in
$L^\infty([0,T];\mc{C}^{-\alpha})$.
 \item In the same way, as $\left(u^n\right)_n\subset L^\infty([0,T];\mc{C}^1_*)$ and
$\left(\nabla u^n\right)_n\subset L^\infty([0,T];L^\infty)$ are both bounded sequences, one has that the sequence
$\left(u^n\cdot\nabla u^n\right)_n$ is bounded in $L^\infty([0,T];\mc{C}^{-\alpha})$.
\end{itemize}
Therefore, exactly as done for the density, we get that $\left(u^n\right)_n$ is bounded
in $\mc{C}^{0,1}([0,T];\mc{C}^{-\alpha})$, so uniformly equicontinuous in the time variable.
This fact implies that $u^n\,\ra\,u$ in $\mc{C}([0,T];\mc{C}^{-\alpha}_{loc})$.

Finally, thanks to uniform bounds and Fatou's property of Besov spaces, we have that $u\in L^\infty([0,T];\mc{C}^1_*)$ and, by
interpolation, that $u^n\,\ra\,u$ in $\mc{C}([0,T];\mc{C}^{1-\eta}_{loc})$ for all $\eta>0$.

\medskip
So, thanks to strong convergence properties, if we test the equations on a
$\vphi\in\mc{C}^1([0,T];\mc{S}(\R^N))$ (here we have set $\mc{S}$ to be the Schwartz class), we can pass to the limit and get that
$\left(\rho,u,\nabla\Pi\right)$ is indeed a solution of the Euler system \eqref{eq:ddeuler}.

\medskip
Before going on with the striated regularity, let us establish continuity properties of the solutions with respect to
the time variable.

First of all, from
$$
\d_t\rho\,=\,-\,u\cdot\nabla\rho\,,
$$
as $u\in\mc{C}([0,T];L^\infty)$ (from the convergence properties just stated) and $\nabla\rho\in L^\infty([0,T];L^\infty)$,
we obtain that $\rho\in\mc{C}^{0,1}([0,T];L^\infty)$, and the same holds for $a:=\rho^{-1}$.

Remember that $u\in L^\infty([0,T];L^p)$, $\nabla u$ and $a\in L^\infty([0,T];L^\infty)$. Moreover, as
$\nabla\Pi\in L^\infty([0,T];L^2)\cap L^\infty([0,T];L^\infty)$, it belongs also to $L^\infty([0,T];L^p)$. So, from the equation
$$
\d_tu\,=\,-\,u\cdot\nabla u\,-\,a\,\nabla\Pi\,,
$$
we get that $\d_tu\in L^1([0,T];L^p)$, therefore $u\in\mc{C}([0,T];L^p)$.

In the same way, from \eqref{eq:vort} we get that $\Omega\in\mc{C}([0,T];L^q)$, and hence the same holds true for $\nabla u$.

Now, using elliptic equation \eqref{eq:Pi} and keeping in mind the properties just proved for $\rho$ and $a$, one can see
that $\nabla\Pi\in\mc{C}([0,T];L^2)$. So, coming back to the previous equation, we discover that also $\d_tu$ belongs to the same
space.

\subsubsection{Final checking about striated regularity}

It remains us to prove that also properties of striated regularity are preserved in passing to the limit. For doing this,
we will follow the outline of the proof in \cite{D1997}.
\begin{enumerate}
 \item \emph{Convergence of the flow}

Let $\psi^n$ and $\psi$ be the flows associated respectively to $u^n$ and $u$; for all fixed $\vphi\in\mc{D}(\R^N)$, by definition
 we have:
\begin{eqnarray*}
 \left|\vphi(x)\left(\psi^n(t,x)-\psi(t,x)\right)\right| & \leq &
\int^t_0\left|\vphi(x)\left(u^n(\tau,\psi^n(\tau,x))-u(\tau,\psi(\tau,x))\right)\right|d\tau \\
& \leq & \int^t_0\left|\vphi(x)\left(u^n-u\right)(\tau,\psi^n(\tau,x))\right|\,+ \\
& & \qquad\qquad+\,\left|\vphi(x)u^n(\tau,\psi^n(\tau,x))-\vphi(x)u^n(\tau,\psi(\tau,x))\right|d\tau \\
& \leq & \int^t_0\left\|\nabla u^n\right\|_{L^\infty}\left|\vphi(x)\left(\psi^n-\psi\right)(\tau,x)\right|d\tau\,+ \\
& & \qquad\qquad\qquad\qquad\qquad+\,\int^t_0\left\|\vphi u^n-\vphi u\right\|_{L^\infty}\,d\tau\,.
\end{eqnarray*}
So, from convergence properties stated in previous part, we have that $\psi^n\,\ra\,\psi$ in the space
$L^\infty([0,T];Id+L^\infty_{loc})$. Moreover, it's easy to see that
$$
\left\|\nabla\psi^n(t)\right\|_{L^\infty}\,\leq\,c\,\exp\left(\int^t_0\left\|\nabla u^n\right\|_{L^\infty}\,d\tau\right)\,,
$$
which tells us that the sequence $\left(\psi^n\right)_n$ is bounded in $L^\infty([0,T];Id+\mc{C}^{0,1})$. Hence, finally we discover
that $\psi^n\,\ra\,\psi$ also in the spaces $L^\infty([0,T];Id+\mc{C}^{1-\eta}_{loc})$ for all $\eta>0$.

\item \emph{Regularity of $\d_{X_0}\psi$}

First of all, let us notice that, by definition,
$$
\d_{X_0(x)}\psi^n(t,x)\,=\,X^n_t(\psi^n(t,x))\,;
$$
applying proposition \ref{p:comp}, we get
\begin{equation} \label{est:X-o-psi}
\left\|\d_{X_0}\psi^n_t\right\|_{\mc{C}^\veps}\,=\,\left\|X^n_t\circ\psi^n_t\right\|_{\mc{C}^\veps}\,\leq\,
c\,\left\|\nabla\psi^n_t\right\|_{L^\infty}\,\|X^n_t\|_{\mc{C}^\veps}\,,
\end{equation}
which implies that $\left(\d_{X_0}\psi^n\right)_n$ is bounded in the space $L^\infty([0,T];\mc{C}^\veps)$. Now we note that,
for every fixed $\vphi\in\mc{D}(\R^N)$, we have
$$
\vphi\,\d_{X_0}\psi^n\,-\,\vphi\,\d_{X_0}\psi\,=\,\d_{X_0}(\vphi\psi^n-\vphi\psi)\,-\,
\left(\d_{X_0}\vphi\right)\left(\psi^n-\psi\right)\,;
$$
the second term is compactly supported, hence it converges in $L^\infty$ because of what we have already proved.
So let us focus on the first one:
$$
\d_{X_0}\left(\vphi\,\psi^n\right)-\d_{X_0}\left(\vphi\,\psi\right)\,=\,
\div\left(X_0\otimes\vphi(\psi^n-\psi)\right)\,-\,\vphi(\psi^n-\psi)\,\,\div X_0\,;
$$
decomposing both terms in paraproduct and remainder and remembering hypothesis over $X_0$, it's easy to see that
$$
\left\|\d_{X_0}\left(\vphi\,\psi^n\right)-\d_{X_0}\left(\vphi\,\psi\right)\right\|_{\mc{C}^{\veps-1}}\,\leq\,
c\,\left\|\vphi\,\psi^n-\vphi\,\psi\right\|_{\mc{C}^\veps}\,\wtilde{\|}X_0\|_{\mc{C}^\veps}\,.
$$
Therefore, from what we have just proved, $\d_{X_0}\psi^n\,\ra\,\d_{X_0}\psi$ in $L^\infty([0,T];\mc{C}^{\veps-1}_{loc})$; moreover,
by Fatou's property, one gets that $\d_{X_0}\psi\in L^\infty([0,T];\mc{C}^\veps)$ and it verifies estimate \eqref{est:X-o-psi}.
So, by interpolation, convergence occurs also in $L^\infty([0,T];\mc{C}^{\veps-\eta}_{loc})$ for all $\eta>0$.

\item \emph{Regularity of $\,X_t$}

Remembering the definition
$$ 
 X_t(x)\,:=\,\left(\d_{X_0(x)}\psi\right)(t,\psi^{-1}_t(x))\quad\mbox{ and }\quad
\div X_t\,=\,\div X_0\circ\psi^{-1}_t\,,
$$ 
from proposition \ref{p:comp} it immediately follows that $X_t$ and $\div X_t$ both belong to $\mc{C}^\veps$. Moreover, the same
proposition implies that $X^n\,\ra\,X$ in the space $L^\infty([0,T];\mc{C}^{\veps-\eta}_{loc})$ for all $\eta>0$, and the same
holds for the
divergence. In particular, we have convergence also in $L^\infty([0,T];L^\infty_{loc})$, which finally tells us that $X_t$ remains
non-degenerate for all $t\in[0,T]$, i.e. $I(X_t)\,\geq\,c\,\,I(X_0)$.

\item \emph{Striated regularity for the density and the vorticity}

Let us first prove that regularity of the density with respect to $X_t$ is preserved in time.
To simplify the presentation, we will omit the localisation by $\vphi\in\mc{D}(\R^N)$: formally, we should repeat
the same reasoning applied to prove regularity of $\d_{X_0}\psi$.
So, let us consider
$$
\d_{X^n}\rho^n\,-\,\d_X\rho\,=\,\div\left(\rho^n\,(X^n-X)\right)\,-\,\rho^n\,\div(X^n-X)\,+\,
\div\left((\rho^n-\rho)\,X\right)\,-\,(\rho^n-\rho)\,\,\div X
$$
and prove the convergence in $L^\infty([0,T];\mc{C}^{-1}_{loc})$. Using Bony's paraproduct decomposition, it's not difficult to see
that first and third terms can be bounded by
$\|\rho^n\|_{L^\infty}\,\|X^n-X\|_{L^\infty}\,+\,\|\rho^n-\rho\|_{L^\infty}\,\|X\|_{L^\infty}$, while second and last terms can be
controlled by
$\|\rho^n\|_{L^\infty}\,\,\|\div(X^n-X)\|_{\mc{C}^{\veps/2}}\,+\,\|\rho^n-\rho\|_{L^\infty}\,\|\div X\|_{\mc{C}^{\veps/2}}$, for
instance.
So, from the convergence properties stated for $\left(\rho^n\right)_n$ and $\left(X^n\right)_n$, we get that
$\d_{X^n}\rho^n\,\ra\,\d_X\rho$ in the space $L^\infty([0,T];\mc{C}^{-1}_{loc})$, as claimed. Moreover, from a priori bounds and
Fatou's property of Besov spaces, we have that $\d_X\rho\in L^\infty([0,T];\mc{C}^\veps)$ and so, by interpolation, convergence
occurs also in $L^\infty([0,T];\mc{C}^{\veps-\eta}_{loc})$ for all $\eta>0$.

Now we consider the vorticity term (again, we omit the multiplication by a $\mc{D}(\R^N)$ function):
\begin{eqnarray*}
\d_{X^n}\Omega^n\,-\,\d_X\Omega & = & \div\left((X^n-X)\otimes\Omega^n\right)\,-\,\Omega^n\,\,\div(X^n-X)\,+ \\
& & \qquad\qquad\qquad+\,\div\left(X\otimes(\Omega^n-\Omega)\right)\,-\,(\Omega^n-\Omega)\,\,\div X\,.
\end{eqnarray*}
From the convergence properties of $\left(u^n\right)_n$, we know that $\Omega^n\,\ra\,\Omega$ in $L^\infty([0,T];\mc{C}^{-\eta}_{loc})$
for all $\eta>0$, so for instance also for $\eta=\veps/2$. From this, using again paraproduct decomposition as done before, one can
prove that $\d_{X^n}\Omega^n\,\ra\,\d_X\Omega$ in $L^\infty([0,T];\mc{C}^{-1-\veps/2}_{loc})$. From a priori
estimates and Fatou's property of Besov spaces again, this implies $\d_X\Omega\in L^\infty([0,T];\mc{C}^{\veps-1})$, and moreover
convergence remains true (by interpolation) in spaces $L^\infty([0,T];\mc{C}^{\veps-1-\eta}_{loc})$ for all $\eta>0$.
\end{enumerate}

So, all the properties linked to striated regularity are now verified, and this concludes the proof of the existence part
of theorem \ref{t:stri-N}.

\subsection{Uniqueness}

Uniqueness in theorem \ref{t:stri-N} is an immediate consequence of the following stability result.

\begin{prop} \label{p:stab}
 Let $\left(\rho^1,u^1,\nabla\Pi^1\right)$ and $\left(\rho^2,u^2,\nabla\Pi^2\right)$ be solutions of system \eqref{eq:ddeuler} with
$$
0\,\,<\,\,\rho_*\,\,\leq\,\,\rho^1\,,\,\rho^2\,\,\leq\,\,\rho^*\,.
$$
Let us suppose that $\delta\rho:=\rho^1-\rho^2\,\in\,\mc{C}([0,T];L^2)$ and that
$\delta u:=u^1-u^2\,\in\,\mc{C}^1([0,T];L^2)$. \\
Finally, assume that $\nabla\rho^2$, $\nabla u^1$, $\nabla u^2$ and $\nabla\Pi^2$ all belong to $L^1([0,T];L^\infty)$.

Then, for all $t\in[0,T]$, we have the following estimate:
$$
\left\|\delta\rho(t)\right\|_{L^2}\,+\,\left\|\delta u(t)\right\|_{L^2}\,\leq\,C\,e^{c\,I(t)}\,
\left(\left\|\delta\rho(0)\right\|_{L^2}\,+\,\left\|\delta u(0)\right\|_{L^2}\right)\,,
$$
where we have defined
$$
I(t)\,:=\,\int^t_0\left(\left\|\nabla\rho^2\right\|_{L^\infty}+\left\|\nabla u^1\right\|_{L^\infty}+
\left\|\nabla u^2\right\|_{L^\infty}+\left\|\nabla\Pi^2\right\|_{L^\infty}\right)d\tau\,.
$$
\end{prop}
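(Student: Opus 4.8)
The plan is to run coupled $L^2$ energy estimates for the differences $\delta\rho:=\rho^1-\rho^2$ and $\delta u:=u^1-u^2$ (write also $\delta\Pi:=\Pi^1-\Pi^2$), and then close the estimate with Gronwall's lemma. The first thing to record is that $\div\,\delta u=\div u^1-\div u^2=0$: this divergence-free character of $\delta u$ is what will allow us to dispose of the pressure, which is the only genuine difficulty here.

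For the density, subtracting the two transport equations in \eqref{eq:ddeuler} gives $\partial_t\delta\rho+u^1\cdot\nabla\delta\rho+\delta u\cdot\nabla\rho^2=0$. Testing against $\delta\rho$ and using $\div u^1=0$ to annihilate $\int(u^1\cdot\nabla\delta\rho)\,\delta\rho$ (equivalently, invoking the standard $L^2$ estimate for the transport equation, legitimate since $\delta u\in\mc{C}([0,T];L^2)$ and $\nabla\rho^2\in L^1([0,T];L^\infty)$ make the source $-\delta u\cdot\nabla\rho^2$ belong to $L^1([0,T];L^2)$), one gets
$$\frac{d}{dt}\|\delta\rho(t)\|_{L^2}\,\le\,\|\nabla\rho^2(t)\|_{L^\infty}\,\|\delta u(t)\|_{L^2}\,.$$

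The velocity estimate is the heart of the proof. The idea is \emph{not} to bound $\nabla\delta\Pi$ through its elliptic equation — whose natural right-hand side involves $\nabla\delta u$, which is uncontrolled — but to weight the energy estimate by $\rho^1$, so that the \emph{full} gradient $\nabla\delta\Pi$ (not $\rho^{-1}\nabla\delta\Pi$) appears and is then killed against the divergence-free field $\delta u$. Starting from $\rho^i(\partial_t u^i+u^i\cdot\nabla u^i)=-\nabla\Pi^i$, and using $\rho^2=\rho^1-\delta\rho$, the identity $u^1\!\cdot\!\nabla u^1-u^2\!\cdot\!\nabla u^2=u^1\!\cdot\!\nabla\delta u+\delta u\!\cdot\!\nabla u^2$, and the momentum equation for the second solution written as $\partial_t u^2+u^2\cdot\nabla u^2=-\frac{1}{\rho^2}\nabla\Pi^2$, one obtains
$$\rho^1\bigl(\partial_t\delta u+u^1\cdot\nabla\delta u\bigr)\,=\,-\,\rho^1\,\delta u\cdot\nabla u^2\,+\,\frac{\delta\rho}{\rho^2}\,\nabla\Pi^2\,-\,\nabla\delta\Pi\,.$$
Taking the $L^2$ scalar product with $\delta u$: the left-hand side equals $\frac12\frac{d}{dt}\int\rho^1|\delta u|^2$, since $\partial_t\rho^1+u^1\cdot\nabla\rho^1=0$ and $\div u^1=0$ make all the cross terms cancel; the term $\int\nabla\delta\Pi\cdot\delta u$ vanishes because $\delta u$ is divergence-free (orthogonality of gradients and solenoidal fields in $L^2$, valid even though $\delta\Pi$ itself need not lie in $L^2$); and the two remaining terms are controlled by $\rho^*\|\nabla u^2\|_{L^\infty}\|\delta u\|_{L^2}^2$ and $\rho_*^{-1}\|\nabla\Pi^2\|_{L^\infty}\|\delta\rho\|_{L^2}\|\delta u\|_{L^2}$. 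Using $\rho_*\|\delta u\|_{L^2}^2\le\int\rho^1|\delta u|^2\le\rho^*\|\delta u\|_{L^2}^2$ then yields a differential inequality for $\bigl(\int\rho^1|\delta u|^2\bigr)^{1/2}$ in terms of $\|\nabla u^2\|_{L^\infty}$, $\|\nabla\Pi^2\|_{L^\infty}$ and $\|\delta\rho\|_{L^2}$.

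To conclude, I would set $Y(t):=\bigl(\int\rho^1|\delta u(t)|^2\bigr)^{1/2}+\|\delta\rho(t)\|_{L^2}$, add the two differential inequalities, and arrive at $\frac{d}{dt}Y\le C\bigl(\|\nabla\rho^2\|_{L^\infty}+\|\nabla u^2\|_{L^\infty}+\|\nabla\Pi^2\|_{L^\infty}\bigr)Y$ with $C=C(\rho_*,\rho^*)$; Gronwall's lemma together with the equivalence $Y\asymp\|\delta u\|_{L^2}+\|\delta\rho\|_{L^2}$ gives the stated bound (one may freely enlarge $I(t)$ by $\|\nabla u^1\|_{L^\infty}$ as in the statement, since this only weakens the estimate — in fact the $\rho^1$-weighting makes $\nabla u^1$ unnecessary). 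The only point requiring care is making the above formal computations rigorous, but $\delta u\in\mc{C}^1([0,T];L^2)$, $\delta\rho\in\mc{C}([0,T];L^2)$, the uniform bounds on $\rho^i$ and the $L^1([0,T];L^\infty)$ control of $\nabla u^i,\nabla\rho^2,\nabla\Pi^2$ suffice to justify every integration by parts (by a routine mollification argument if desired). The main obstacle to anticipate is exactly the pressure term $\int\rho^1(\ldots)\cdot\delta u$ containing $\nabla\delta\Pi$, and it is removed precisely by the combination of the weighting by $\rho^1$ and the identity $\div\,\delta u=0$.
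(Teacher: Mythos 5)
Your argument is correct, but it differs from the one in the paper at the crucial point, namely how the pressure difference $\nabla\delta\Pi$ is disposed of. The paper treats the $\delta u$ equation as a transport equation
\[
\partial_t\delta u + u^1\cdot\nabla\delta u = -\,\delta u\cdot\nabla u^2 \,-\,\frac{\nabla\delta\Pi}{\rho^1}\,+\,\frac{\nabla\Pi^2}{\rho^1\rho^2}\,\delta\rho
\]
and applies the standard $L^2$ estimate, which produces the term $\int_0^t\|\nabla\delta\Pi\|_{L^2}\,d\tau$; it then bounds $\|\nabla\delta\Pi\|_{L^2}$ through the elliptic equation $-\div(\rho^{-1}_1\nabla\delta\Pi)=\div F$ and Lemma~\ref{l:laxmilgram}, the nontrivial step being the algebraic rewriting $\div(u^1\cdot\nabla\delta u)=\div(\delta u\cdot\nabla u^1)+\div(u^1\div\delta u)-\div(\delta u\div u^1)$, which eliminates the uncontrolled $\nabla\delta u$ from the right-hand side at the cost of introducing $\nabla u^1$. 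You instead weight the energy by $\rho^1$, observe that $\tfrac12\tfrac{d}{dt}\int\rho^1|\delta u|^2 = \int\rho^1(\partial_t\delta u+u^1\cdot\nabla\delta u)\cdot\delta u$ because $\rho^1$ solves the transport equation with divergence-free $u^1$, and then kill $\int\nabla\delta\Pi\cdot\delta u$ outright using $\div\delta u=0$, with no elliptic estimate at all. Your route is more elementary (no appeal to the Lax--Milgram lemma or to the vector-calculus identity) and yields the marginally sharper conclusion that $\nabla u^1$ need not appear in $I(t)$; the paper's route has the advantage of reusing the elliptic machinery that is already central to the rest of the article and extends more readily to $L^p$-based stability where the cancellation $\int\nabla\delta\Pi\cdot\delta u=0$ is unavailable. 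Both arguments require $\nabla\delta\Pi\in L^2$ at almost every time — implicit in the paper as the output of Lemma~\ref{l:laxmilgram}, implicit in yours to justify the orthogonality of gradients to solenoidal fields — but this is harmless in the context where the proposition is applied.
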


\begin{proof}
 From $\d_t\delta\rho\,+\,u^1\cdot\nabla\delta\rho\,=\,-\,\delta u\cdot\nabla\rho^2$, we immediately get
$$
\|\delta\rho(t)\|_{L^2}\,\leq\,\|\delta\rho(0)\|_{L^2}\,+\,
\int^t_0\|\delta u\|_{L^2}\,\left\|\nabla\rho^2\right\|_{L^\infty}\,d\tau\,.
$$
Moreover, the equation for $\delta u$ reads as follows:
$$
\d_t\delta u\,+\,u^1\cdot\nabla\delta u\,=\,-\,\delta u\cdot\nabla u^2\,-\,\frac{\nabla\delta\Pi}{\rho^1}\,+\,
\frac{\nabla\Pi^2}{\rho^1\,\rho^2}\,\delta\rho\,,
$$
where we have set $\delta\Pi\,=\,\Pi^1-\Pi^2$. So, from standard $L^p$ estimates for transport equations, one has:
$$
\|\delta u(t)\|_{L^2}\,\leq\,\|\delta u(0)\|_{L^2}+C\int^t_0\left(\|\delta u\|_{L^2}\left\|\nabla u^2\right\|_{L^\infty}+
\|\nabla\delta\Pi\|_{L^2}+\left\|\nabla\Pi^2\right\|_{L^\infty}\|\delta\rho\|_{L^2}\right)d\tau\,.
$$
Now, let us analyse the equation for $\nabla\delta\Pi$:
\begin{eqnarray*}
-\,\div\left(\frac{\nabla\delta\Pi}{\rho^1}\right) & = & \div\left(-\,\frac{\nabla\Pi^2}{\rho^1\,\rho^2}\,\delta\rho\,+\,
u^1\cdot\nabla\delta u\,+\,\delta u\cdot\nabla u^2\right) \\
& = & \div\left(-\,\frac{\nabla\Pi^2}{\rho^1\,\rho^2}\,\delta\rho\,+\,\delta u\cdot(\nabla u^1+\nabla u^2)\right)\,,
\end{eqnarray*}
where, to get the second equality, we have used the algebraic identity
$$
\div(v\cdot\nabla w)\,=\,\div(w\cdot\nabla v)\,+\,\div(v\,\,\div w)\,-\,\div(w\,\,\div v)\,.
$$
So, from lemma \ref{l:laxmilgram} we obtain
$$
\|\nabla\delta\Pi\|_{L^2}\,\leq\,C\left(\left\|\nabla\Pi^2\right\|_{L^\infty}\,\|\delta\rho\|_{L^2}\,+\,
\|\delta u\|_{L^2}\left(\left\|\nabla u^1\right\|_{L^\infty}+\left\|\nabla u^2\right\|_{L^\infty}\right)\right)\,,
$$
and Gronwall's inequality completes the proof of the proposition.
\end{proof}

Now, let us prove uniqueness part in theorem \ref{t:stri-N}.
Let $\left(\rho^1,u^1,\nabla\Pi^1\right)$ and $\left(\rho^2,u^2,\nabla\Pi^2\right)$ satisfy
system \eqref{eq:ddeuler} with same initial data $\left(\rho_0,u_0\right)$, under hypothesis of theorem \ref{t:stri-N}.

As $\delta u(0)=0$ and $u\in\mc{C}([0,T];L^p)$, $\nabla u\in\mc{C}([0,T];L^q)$, one easily gets that
$\delta u\in\mc{C}^1([0,T];L^2)$.
Moreover, from this fact, observing that also $\delta\rho(0)=0$, the equation for $\delta\rho$ tells us that
$\delta\rho\in\mc{C}([0,T];L^2)$.
Hence proposition \ref{p:stab} can be applied and uniqueness immediately follows.

\section{On the lifespan of the solution} \label{s:lifespan}

The aim of this section is to establish, in the most accurate way, an explicit lower bound for the lifespan of the solution of system
\eqref{eq:ddeuler} in terms of initial data only.

For notation convenience, let us define
$$
L_0\,:=\,\|u_0\|_{L^p}+\|\Omega_0\|_{L^q\cap L^\infty}\qquad\mbox{ and }\qquad A_0\,:=\,\|\nabla\rho_0\|_{L^\infty}\,.
$$

\begin{theo} \label{t:life}
Under the hypothesis of theorem \ref{t:stri-N},
the lifespan $T$ of a solution to system \eqref{eq:ddeuler} with initial data $(\rho_0,u_0)$ is bounded from below,
up to multiplication by a constant (depending only on the space dimension $N$, $\veps$, $p$, $q$, $\rho_*$ and $\rho^*$),
by the quantity
\begin{equation} \label{life:T}
\frac{\min\left\{L_0,\left\|\Omega_0\right\|_{\mc{C}^\veps_{X_0}}\right\}\times\left(L_0
\log\left(e+\frac{\left\|\Omega_0\right\|_{\mc{C}^\veps_{X_0}}}{L_0}\right)\right)^{-1}}
{\left(1+L_0+\left\|\Omega_0\right\|_{\mc{C}^\veps_{X_0}}\right)^{\!\!2}\left(1+A_0^{\delta+3}\right)
\left(1+\wtilde{|||}X_0|||^3_{\mc{C}^\veps}+\left\|\d_X\nabla\rho_0\right\|_{\mc{C}^\veps_{X_0}}\right)}\,,
\end{equation}
where $\delta>1$ is the exponent which occurs in \eqref{est:Pi_C^1_*}.
\end{theo}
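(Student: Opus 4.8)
The plan is to make the bootstrap of Section \ref{s:striated} quantitative. By \eqref{cond-T_1} and \eqref{cond-T_2}, the solution constructed there lives on $[0,T]$ as soon as the two budget conditions $\int_0^t\big(e^{-\int_0^\tau L}L+\|\nabla u\|^2_{L^\infty}\big)\,d\tau\le 2L_0$ and $\int_0^t S^3\,d\tau\le 2S_0$ hold for all $t\le T$, where $L(t)=\|u(t)\|_{L^p}+\|\Omega(t)\|_{L^q\cap L^\infty}$, $L_0=L(0)$, $S(t)=\|\d_{X(t)}\Omega(t)\|_{\mc{C}^{\veps-1}}$ and $S_0=S(0)$. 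It therefore suffices to prove that, for a small absolute constant $c$ and with $T_*$ the quantity in \eqref{life:T}, both conditions remain valid on $[0,cT_*]$. This I would do by a continuation argument: on the maximal subinterval of $[0,cT_*]$ on which the a priori bounds of Section \ref{s:striated} hold, one re-runs those bounds and checks that the feedback integrals $\int\|\nabla u\|^2_{L^\infty}$, $\int L^2$ and $\int S^3$ stay strictly under budget and that the amplification factors $\exp(c\int_0^t\|\nabla u\|_{L^\infty})$ stay bounded by $2$; hence the subinterval is all of $[0,cT_*]$.

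The crucial preliminary is a uniform Lipschitz bound. On the existence interval one has $L(t)\le CL_0$ and $\|\rho(t)\|_{W^{1,\infty}}\le C(1+A_0)$ with $A_0=\|\nabla\rho_0\|_{L^\infty}$, and, tracking \eqref{est:X_C^e} together with Lemma \ref{l:div} and the definition of the striated norm, one controls $\|\Omega(t)\|_{\mc{C}^\veps_{X(t)}}$ by the norms of the data. Inserting these into Proposition \ref{p:Du_L^inf} and using the monotonicity of $z\mapsto z\log(e+k/z)$ gives the pointwise estimate $\|\nabla u(t)\|_{L^\infty}\le C\,\Gamma_0$ with $\Gamma_0:=L_0\log\!\big(e+\|\Omega_0\|_{\mc{C}^\veps_{X_0}}/L_0\big)$. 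One checks by inspection of \eqref{life:T} that $\Gamma_0 T_*\le C$, $L_0 T_*\le C$ and $S_0^2 T_*\le C$ (Lemma \ref{l:div} and the definition of $\|\cdot\|_{\mc{C}^\veps_{X_0}}$ bound $S_0$ by a constant times $(1+L_0+\|\Omega_0\|_{\mc{C}^\veps_{X_0}})$ and the relevant norms of $X_0$, so $S_0^2$ is dominated by the denominator of \eqref{life:T}). Consequently on $[0,cT_*]$ one keeps $\int_0^t\|\nabla u\|_{L^\infty}\,d\tau\le Cc\le 1$, every exponential factor is $\le 2$, and all the ``constants depending on $T$'' occurring in Section \ref{s:striated} turn into genuine constants depending only on $N,\veps,p,q,\rho_*,\rho^*$ and the norms of the data.

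Once the exponentials are tamed, \eqref{cond-T_1} follows from $\int_0^t\big(e^{-\int_0^\tau L}L+\|\nabla u\|^2_{L^\infty}\big)\,d\tau\le C(L_0+\Gamma_0^2)\,t\le C(L_0+\Gamma_0^2)cT_*$ together with the elementary bound $(L_0+\Gamma_0^2)T_*\le C\,L_0$ read off \eqref{life:T}; choosing $c$ small makes the right-hand side $<2L_0$. Similarly, re-running the estimates of Subsection \ref{ss:fin-est} with uniform constants yields $S(t)\le CS_0$ on $[0,cT_*]$, hence $\int_0^t S^3\,d\tau\le CS_0^3\,t\le CS_0^3\,cT_*\le CcS_0<2S_0$, which is \eqref{cond-T_2}. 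The exponents in the denominator of \eqref{life:T} come out of this computation: $\delta$ is inherited from \eqref{est:Pi_C^1_*} through $\|\nabla\Pi\|_{\mc{C}^1_*}\le C(1+A_0^\delta)L_0^2+C\|\nabla u\|^2_{L^\infty}$ (and thus \eqref{est:D^2-Pi}, \eqref{est:d_X-Pi}), the additional $A_0^3$ from the powers of $\|\rho\|_{W^{1,\infty}}$ multiplying $\|\d_X\rho\|_{\mc{C}^\veps}$ and $\|\nabla\Pi\|_{\mc{C}^1_*}$ in \eqref{est:d_X-Pi}--\eqref{est:d_X-Om}, and the factor $\wtilde{|||}X_0|||^3_{\mc{C}^\veps}$ from the cubic terms $\wtilde{\|}X\|^3_{\mc{C}^\veps}$ there; the logarithm in the numerator comes straight from Proposition \ref{p:Du_L^inf}, and the factor $\min\{L_0,\|\Omega_0\|_{\mc{C}^\veps_{X_0}}\}$ reflects that the lifespan is the smaller of the two times allowed by \eqref{cond-T_1} and \eqref{cond-T_2}. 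Setting $A_0=0$ and dropping the $\rho$-terms should recover, up to constants, the classical homogeneous lower bound of \cite{G-SR}, as announced in Remark \ref{r:T}.

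The main obstacle is the bookkeeping rather than any individual inequality: one must go through every estimate of Section \ref{s:striated}, separate the part polynomial in the data from the part exponential in $\int_0^t\|\nabla u\|_{L^\infty}\,d\tau$ (the latter being exactly what the continuation argument controls), and verify that the product of all the polynomial factors is dominated by $\big(1+L_0+\|\Omega_0\|_{\mc{C}^\veps_{X_0}}\big)^2(1+A_0^{\delta+3})\big(1+\wtilde{|||}X_0|||^3_{\mc{C}^\veps}+\|\d_X\nabla\rho_0\|_{\mc{C}^\veps_{X_0}}\big)$. A secondary subtlety is that \eqref{cond-T_1} and \eqref{cond-T_2} are coupled — the control of $S$ uses the controls of $L$ and $\|\nabla u\|_{L^\infty}$, which in turn use $\|\Omega\|_{\mc{C}^\veps_X}$ and hence $S$ — so the continuation argument must be run simultaneously for $L$, $\|\nabla u\|_{L^\infty}$, the striated norms, and $S$.
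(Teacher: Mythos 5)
Your proposal is correct and follows essentially the same approach as the paper: both proofs make the bootstrap of Section \ref{s:striated} quantitative by (i) using Proposition \ref{p:Du_L^inf} to get the pointwise bound $\|\nabla u\|_{L^\infty}\le C\,L(t)\log(e+S(t)/L(t))$, (ii) showing on a stopping-time interval that $L$, $S$, $\|\rho\|_{W^{1,\infty}}$ and the striated norms stay comparable to their initial values, and (iii) reading off the explicit time from the budget conditions \eqref{cond-T_1}--\eqref{cond-T_2}, with the polynomial factors in $A_0$, $\wtilde{|||}X_0|||_{\mc{C}^\veps}$ and $R_0$ coming precisely from \eqref{est:Pi_C^1_*}, \eqref{est:d_X-Pi} and \eqref{est:d_X-Om} as you indicate. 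One small presentational caveat: the ``crucial preliminary'' uniform Lipschitz bound $\|\nabla u(t)\|_{L^\infty}\le C\Gamma_0$ is itself an output of the bootstrap (it needs $L(t)\le CL_0$ and control of $\|\Omega(t)\|_{\mc{C}^\veps_{X(t)}}$, hence of $S(t)$), so it must be derived inside the continuation argument rather than before it, which you do acknowledge by running the bounds on the maximal subinterval; the paper makes this explicit by introducing the time-dependent quantity $\Theta(t)$ and the stopping times $T_1,T_2$.
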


\begin{proof}
 Our starting point is subsection \ref{ss:fin-est}: with the same notations,
moreover we define
\begin{eqnarray*}
& & \Theta(t):=L(t)\,\log\left(e+\frac{S(t)}{L(t)}\right)\,,\;\;
U(t):=\int^t_0\left\|\nabla u(\tau)\right\|_{L^\infty}\,d\tau\,,\\
& & A(t):=\left\|\nabla\rho(t)\right\|_{L^\infty}\,,\;\;
\Gamma(t):=\wtilde{\|}X(t)\|_{\mc{C}^\veps}\,,\;\;R(t):=\left\|\d_{X(t)}\nabla\rho(t)\right\|_{\mc{C}^{\veps-1}}\,.
\end{eqnarray*}
It's only matter of repeating previous computations in a more accurate way.

Let us notice that, from inequality \eqref{est:Du_L^inf}, for all time $t$ one has
\begin{equation} \label{est:U}
U'(t)\,=\,\|\nabla u(t)\|_{L^\infty}\,\leq\,C\,\Theta(t)\,:
\end{equation}
we will make a broad use of this fact.

Now, let us define the time $T_1\,:=\,\sup\left\{t>0\,|\,U(t)\,\leq\,\log2\right\}$. Then, on $[0,T_1]$ we have
(from \eqref{est:rho_L^inf} and \eqref{est:Drho_L^inf})
$$
A(t)\,\leq\,C\,A_0\qquad\mbox{ and }\qquad \|\rho(t)\|_{W^{1,\infty}}\,\leq\,C\,\|\rho_0\|_{W^{1,\infty}}\,,
$$
and so, keeping in mind \eqref{est:u_L^p}, \eqref{est:Om_L^q}, \eqref{est:Om_L^inf} and \eqref{f-est:Pi}, we get also
\begin{eqnarray}
 L(t) & \leq & C\left(L(0)+\left(1+A^{\delta+1}_0\right)\int^t_0\Theta^2(\tau)\,d\tau\right) \label{life-est:L} \\
\|\nabla\Pi\|_{L^2\cap\mc{C}^1_*} & \leq & C\left(1+A^{\delta+1}_0\right)\,\Theta^2(t)\,. \nonumber
\end{eqnarray}
In addition, \eqref{est:I} implies $I(X(t))\geq CI(X_0)$, while from \eqref{est:X_C^e} and \eqref{est:d_X-u}
it follows
$$
\Gamma(t)\,\leq\,C\left(\Gamma_0+\int^t_0S(\tau)d\tau\right)\qquad\mbox{ and }\qquad
\left\|\d_Xu(t)\right\|_{\mc{C}^\veps}\,\leq\,C\left(S(t)\,+\,\Gamma(t)\,\Theta(t)\right).
$$
Finally, \eqref{est:d_X-rho} and \eqref{est:d_X-Drho} entail
$$
\left\|\d_X\rho\right\|_{\mc{C}^\veps}\,\leq\,C\left(\bigl(1+A_0\bigr)\Gamma_0+R_0\right)\;\mbox{ and }\;
\left\|\d_X\nabla\rho\right\|_{\mc{C}^{\veps-1}}\,\leq\,C\left(\bigl(1+A_0\bigr)\Gamma_0+R_0+\int^t_0S(\tau)d\tau\right).
$$

From the inequalities we've just established, the control of the striated norm of $\nabla\Pi$ immediately follows.

Let us proceed carefully, as done in subsection \ref{ss:fin-est}. After some simply (even if rough) manipulations, we get
(up to multiplication by constant terms)
\begin{eqnarray*}
 \|\rho\|_{W^{1,\infty}}\left\|\d_X\rho\right\|_{\mc{C}^\veps}\|\nabla\Pi\|_{\mc{C}^1_*} & \leq &
\left(1+A^{\delta+2}_0\right)\left(\Gamma_0+R_0\right)\Theta^2(t) \\
\|\nabla\Pi\|_{\mc{C}^1_*}\wtilde{\|}X\|_{\mc{C}^\veps}\|\rho\|^2_{W^{1,\infty}} & \leq & 
\left(1+A^{\delta+2}_0\right)\left(\Gamma_0+\int^t_0S(\tau)d\tau\right)\Theta^2(t)\,.
\end{eqnarray*}
Now, thanks to $(a+b)^3\leq C(a^3+b^3)$ and Jensen's inequality we infer
$$
\wtilde{\|}X\|^3_{\mc{C}^\veps}\|\nabla\Pi\|_{\mc{C}^1_*}\,\leq\,\left(\Gamma_0+
\int^t_0S^3(\tau)d\tau\right)\left(1+A^{\delta}_0\right)\Theta^2(t)\,.
$$
Finally, the fact that $\|u(t)\|_{\mc{L}^{p,\infty}}\leq\Theta(t)$ implies
\begin{eqnarray*}
 \|\rho\|_{W^{1,\infty}}\wtilde{\|}X\|_{\mc{C}^\veps}\|u\|^2_{\mc{L}^{p,\infty}} & \leq & 
(1+A_0)\left(\Gamma_0+\int^t_0S(\tau)\,d\tau\right)\Theta^2(t) \\
\|\rho\|_{W^{1,\infty}}\|u\|_{\mc{L}^{p,\infty}}\left\|\d_Xu\right\|_{\mc{C}^\veps} & \leq &
(1+A_0)\,\Theta(t)\left(S(t)+\left(\Gamma_0+\int^t_0S(\tau)\,d\tau\right)\Theta(t)\right) \\
& \leq & (1+A_0)\,(1+\Gamma_0)\,(S(t)+1)\,\Theta^2(t)\,+\,(1+A_0)\,\Theta^2(t)\int^t_0S(\tau)d\tau\,,
\end{eqnarray*}
where, in deriving the last bound, we have used also that $\Theta(t)\geq1$.

Let us define
$$
M_0\,:=\,\left(1\,+\,A_0^{\delta+2}\right)\,\left(1\,+\,\Gamma_0^3\,+\,R_0\right);
$$
as $\int S\leq1+\int S^3$, in the end we get
\begin{equation} \label{life-est:Pi}
\left\|\d_{X(t)}\nabla\Pi(t)\right\|_{\mc{C}^\veps}\,\leq\,C\,M_0\,\Theta^2(t)
\left(1+S(t)+\int^t_0S^3(\tau)d\tau\right).
\end{equation}

Now let us focus on the striated norm of the vorticity, estimated in \eqref{est:d_X-Om}.
Analysing each term which occurs in the definition \eqref{def:iota} of $\Upsilon$, we see that first,
second and fourth items can be bounded by $\left\|\d_{X(t)}\nabla\Pi(t)\right\|_{\mc{C}^\veps}$, and the third one
is controlled as in \eqref{life-est:Pi}, up to replace $M_0$ with $\wtilde{M}_0:=(1+A_0)M_0$.  Finally, we have
\begin{eqnarray*}
 \|\rho\|_{W^{1,\infty}}\left\|\nabla\Pi\right\|_{\mc{C}^1_*}\left\|\d_X\nabla\rho\right\|_{\mc{C}^{\veps-1}} & \leq &
(1+A_0)\,\left(1+A^{\delta}_0\right)\,\Theta^2(t)\left((1+A_0)\Gamma_0+R_0+\int^t_0S(\tau)d\tau\right) \\
& \leq & \left(1+A^{\delta+2}_0\right)\left(1+\Gamma_0+R_0\right)\Theta^2(t)\left(1+\int^t_0S(\tau)d\tau\right).
\end{eqnarray*}

So, putting all these inequalitites together, we discover that, in $[0,T_1]$,
$$
S(t)\,\leq\,C\left(S_0+\wtilde{M}_0\int^t_0\Theta^2(\tau)\left(1+S(\tau)+\int^\tau_0S^3(\tau')d\tau'\right)d\tau\right),
$$
and by Gronwall's lemma this finally implies
$$
S(t)\,\leq\,C\,e^{c\int^t_0\Theta^2(\tau)d\tau}\left(S_0+
\wtilde{M}_0\int^t_0e^{-\int^\tau_0\Theta^2(\tau')d\tau'}\Theta^2(\tau)\left(1+\int^\tau_0S^3(\tau')d\tau'\right)d\tau\right).
$$

Let us now define $T_2$ as the supremum of the times $t>0$ for which both the conditions
$$
\wtilde{M}_0\int^t_0\Theta^2(\tau)d\tau\leq2 L_0\quad\mbox{ and }\quad
\wtilde{M}_0\int^t_0
\Theta^2(\tau)\left(1+\int^\tau_0S^3(\tau')d\tau'\right)d\tau\leq2 S_0
$$
are fulfilled. Note that, as $\Theta\geq1$, also \eqref{est:U} is verified in $[0,T_2]$, so $T_2\leq T_1$.
Therefore, keeping in mind \eqref{life-est:L}, in $[0,T_2]$ one has
$$
S(t)\,\leq\,C\,S_0\,,\qquad L(t)\,\leq\,C\,L_0\qquad\mbox{ and }\qquad \Theta(t)\,\leq\,C\,\Theta_0\,,
$$
because the function $(\lambda,\sigma)\mapsto\lambda\log(e+\sigma/\lambda)$ is increasing both in $\lambda$ and $\sigma$.

Let us put these bounds in the integral condition defining $T_2$: we discover that $T_2$ is greater than or equal to
every time $t$ for which
$$
\wtilde{M}_0\,\Theta^2_0\,t\,\leq\,2\,L_0\quad\mbox{ and }\quad
\wtilde{M}_0\,\Theta^2_0\,t\,+\,\wtilde{M}_0\,\Theta^2_0\,S_0^3\,\frac{t^2}{2}\,\leq\,2\,S_0\,.
$$
Therefore, if we define
\begin{equation} \label{life-def:T}
 T\;:=\;\frac{K\,\min\{L_0,S_0\}}{\wtilde{M}_0\,(1+L_0+S_0)^2}\,\Theta_0^{-1}\,,
\end{equation}
then both the previous relations are fulfilled, for some suitable constant $K$. Hence, $T\leq T_2$, and the theorem is now proved.
\end{proof}

\begin{rem} \label{r:life}
Let us notice that, in the classical case (constant density), the lifespan of a solution was controlled from below by
$$
T_{cl}\,:=\,C\left(\|\Omega_0\|_{L^q\cap L^\infty}\,
\log\left(e+\frac{\|\Omega_0\|_{\mc{C}^\veps_{X_0}}}{\|\Omega_0\|_{L^q\cap L^\infty}}\right)\right)^{-1}
$$
(see also \cite{D1999}). We have just proved that in our case the lifespan is given by \eqref{life:T}, instead.
The two lower bounds are quite similar, even if in our case also the initial density comes into play, and
there are some additional items, basically due to the more complicate analysis of the pressure term.
\end{rem}

\begin{rem} \label{r:life_2}
 Note also that, in the two dimensional case, the stretching term in the vorticity equation disappears. This fact
translates, at the level of a priori estimates, into the absence of the first two items in the right-hand side of \eqref{def:iota}.
Nevertheless, as we have seen, the analysis of $\nabla\Pi$ produces terms of this kind: for this reason,
in dimension $N=2$ we weren't able to improve the lower bound \eqref{life:T}.
\end{rem}

\section{``H\"older continuous vortex patches''} \label{s:conormal}

First of all, let us prove conservation of conormal regularity.

Given a compact hypersurface $\Sigma\subset\R^N$ of class $\mc{C}^{1,\veps}$, we can always find, in a canonical way,
a family $X$ of vector-fields such that the inclusion $\,\mc{C}^\eta_\Sigma\,\subset\,\mc{C}^\eta_X$ holds
for all $\eta\in[\veps,1+\veps]$. For completeness, let us  recall the result (see proposition 5.1 of \cite{D1999}), which
turns out to be important in the sequel.
\begin{prop} \label{p:con->stri}
Let $\Sigma$ be a compact hypersurface of class $\mc{C}^{1,\veps}$.

Then there exists a non-degenerate family of $m=N(N+1)/2$ vector-fields $X\subset\mc{T}^\veps_\Sigma$ such that
$\mc{C}^\eta_\Sigma\,\subset\,\mc{C}^\eta_X$ for all $\eta\in[\veps,1+\veps]$.
\end{prop}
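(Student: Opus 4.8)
The plan is to exhibit the family $X$ explicitly, and then check the three requirements (membership in $\mc{T}^\veps_\Sigma$, non-degeneracy, and the cardinality $m=N(N+1)/2$); the inclusion $\mc{C}^\eta_\Sigma\subset\mc{C}^\eta_X$ will be automatic once the first requirement is met. The first step is to fix a convenient global equation of $\Sigma$. Since $\Sigma$ is a compact $\mc{C}^{1,\veps}$ hypersurface, it bounds a region (Jordan--Brouwer), and the signed distance function $H:=d_\Sigma$ is of class $\mc{C}^{1,\veps}$ on a tubular neighbourhood $V$ of $\Sigma$, with $\Sigma=H^{-1}(0)$ and $|\nabla H|\equiv1$ on $V$; alternatively one glues the local $\mc{C}^{1,\veps}$ equations coming from the charts by a partition of unity, so in any case one gets $H\in\mc{C}^{1,\veps}$ with $|\nabla H|\geq c>0$ on some neighbourhood $V$ of $\Sigma$. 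Then I would pick $\chi\in\mc{C}^\infty$ with $0\leq\chi\leq1$, $\chi\equiv1$ on a neighbourhood $W$ of $\Sigma$ with $\oline{W}\subset V$, and $\mathrm{supp}\,\chi\subset V$, and set $\psi:=1-\chi$.

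Next I would define, for $1\leq i<j\leq N$,
$$X_{ij}\,:=\,\chi\,\bigl((\d_j H)\,e_i\,-\,(\d_i H)\,e_j\bigr)\,,$$
which are $N(N-1)/2$ vector-fields, and, for $1\leq k\leq N$, $X_k:=\psi\,e_k$, which are $N$ more, for a total of $m=N(N-1)/2+N=N(N+1)/2$. Membership in $\mc{T}^\veps_\Sigma$ is routine: the components $\chi\,\d_\ell H$ and $\psi$ lie in $\mc{C}^\veps$; the divergences are $\div X_{ij}=(\d_i\chi)(\d_j H)-(\d_j\chi)(\d_i H)$ --- the second-order derivatives of $H$ cancel distributionally, so no $\mc{C}^\veps$-loss occurs --- and $\div X_k=\d_k\psi$, both of class $\mc{C}^\veps$; finally $\d_{X_{ij}}H=\chi\bigl((\d_j H)(\d_i H)-(\d_i H)(\d_j H)\bigr)\equiv0$ identically, while $X_k$ vanishes on the neighbourhood $W$ of $\Sigma$, so both are tangent to $\Sigma$ (passing from $H$ to an arbitrary local equation $g\,H$ with $g$ nonvanishing does not affect the vanishing on $\Sigma$, since $H=0$ there).

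The heart of the argument, and the step I expect to be the real obstacle, is the lower bound on $I(X)$, the delicate region being the ``transition zone'' $V\setminus W$ where both cut-offs are active. This is exactly what the choice $\psi=1-\chi$ is designed to handle: since $\chi+\psi\equiv1$, at every point $x$ one has $\max\{\chi(x),\psi(x)\}\geq1/2$. If $\psi(x)\geq1/2$, then $X_1(x),\dots,X_{N-1}(x)$ equal $\psi(x)$ times the first $N-1$ basis vectors, so $\bigl|\stackrel{N-1}{\wedge}X_\Lambda(x)\bigr|=\psi(x)^{N-1}\geq2^{-(N-1)}$ for the corresponding $\Lambda$. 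If instead $\chi(x)\geq1/2$, then $x\in V$, hence $|\nabla H(x)|\geq c$; picking $i_0$ with $|\d_{i_0}H(x)|=\max_k|\d_k H(x)|\geq c/\sqrt N$ and taking $\Lambda$ to consist of the indices of $\{X_{i_0,j}\}_{j\neq i_0}$, a direct determinant computation (the $e_{i_0}$-component of the exterior product being $\pm(\d_{i_0}H)^{N-1}$) gives $\bigl|\stackrel{N-1}{\wedge}X_\Lambda(x)\bigr|\geq\chi(x)^{N-1}|\d_{i_0}H(x)|^{N-1}\geq\bigl(c/(2\sqrt N)\bigr)^{N-1}$. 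In both cases the $\Lambda$-term, raised to the power $1/(N-1)$, is bounded below by a constant depending only on $N$ and $c$, whence $I(X)>0$.

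Finally, the inclusion $\mc{C}^\eta_\Sigma\subset\mc{C}^\eta_X$ for every $\eta\in[\veps,1+\veps]$ is immediate from the definitions once we know $X\subset\mc{T}^\veps_\Sigma$: if $f\in\mc{C}^\eta_\Sigma$ then $\div(fX)\in\mc{C}^{\eta-1}$ for every $X\in\mc{T}^\veps_\Sigma$, in particular for each member of the family, i.e. $f\in\mc{C}^\eta_X$. The only points requiring more than bookkeeping are, as already said, the uniformity in the transition zone (resolved by $\psi=1-\chi$ together with the elementary wedge computation) and the global $\mc{C}^{1,\veps}$ regularity of $H$ near $\Sigma$, which I would either quote (regularity of the signed distance to a $\mc{C}^{1,\veps}$ hypersurface) or derive by the partition-of-unity gluing mentioned above.
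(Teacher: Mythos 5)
Your construction is correct and reconstructs the canonical proof: the paper does not prove this statement itself but cites Proposition~5.1 of \cite{D1999}, and the family you build --- the $N(N-1)/2$ rotational fields $\chi\bigl((\d_j H)e_i-(\d_i H)e_j\bigr)$ together with the $N$ coordinate fields $(1-\chi)e_k$, with the global equation $H$ obtained by gluing local ones (possible since the paper's Theorem~A.2 guarantees orientability of a compact hypersurface) --- is exactly Danchin's. All the checks are carried out soundly: the cancellation of second derivatives of $H$ in $\div X_{ij}$, the vanishing of $\d_{X_{ij}}H$ and the annihilation of $X_k$ near $\Sigma$ for tangency, and the dichotomy $\max\{\chi,1-\chi\}\geq1/2$ combined with the wedge computation giving $I(X)>0$ uniformly.
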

Hence, thanks to theorem \ref{t:stri-N} we propagate striated regularity with respect to this family.
Finally, in a classical way, from this fact one can recover conormal properties of the solution, and so get the thesis of
theorem \ref{t:conorm-N} (see e.g. \cite{G-SR}, sections 5 and 6, and \cite{D1999}, section 5, for the details).

\medskip
Actually, in the case of space dimension $N=2\,,\,3$ (finally, the only relevant ones from the physical point of view) one can
improve the statement of theorem \ref{t:conorm-N}. To avoid traps coming from differential geometry, let us clarify our
work setting.

In considering a submanifold $\Sigma\subset\R^N$ of dimension $k$ and of class $\mc{C}^{1,\veps}$ (for some
$\veps>0$), we mean that $\Sigma$ is a manifold of dimension $k$ endowed
with the differential structure inherited from its inclusion in $\R^N$, and the transition maps are of class $\mc{C}^{1,\veps}$. \\
In particular, for all $x\in\Sigma$ there is an open ball $B\subset\R^N$ containing $x$, and a $\mc{C}^{1,\veps}$ local parametrization
$\vphi:\R^k\ra B\cap\Sigma$ with inverse of class $\mc{C}^{1,\veps}$. This is equivalent to require local equations
$H:B\ra\R^k$ of class $\mc{C}^{1,\veps}$ such that $H_{|B\cap\Sigma}\equiv0$.

Let us explicitly point out that, when we speak about generic submanifolds, we always mean submanifolds without boundary.

Given a local parametrization $\vphi$ on $U:=\Sigma\cap B$, its differential $\vphi_*:T\R^k\ra TU\cong T\Sigma$ induces, in
each point $x\in\R^k$, a linear isomorphism between the tangent spaces,
$\vphi_{*,x}:T_x\R^k\ra T_{\vphi(x)}\Sigma$. Moreover, the dependence of this map on the point $x\in\R^k$
is of class $\mc{C}^\veps$: in coordinates, $\vphi_*$ is given by the Jacobian matrix $\nabla\vphi$.

Finally, we say that a function $f$ defined on $\Sigma$ is (locally) of class $\mc{C}^\alpha$ (for $\alpha>0$) if
the composition $f\circ\vphi:\R^k\ra\R$ is $\alpha$-H\"older continuous for any local parametrization $\vphi$.

\medskip
Before stating our claim, some preliminary results are in order. Let us start with a very simple lemma.
\begin{lemma} \label{l:D->Hold}
 Let $f\in L^\infty(\R^N)$ such that its gradient is $\alpha$-H\"older continuous for some $\alpha>0$.

Then $f\in\mc{C}^{1,\alpha}(\R^N)\equiv B^{1+\alpha}_{\infty,\infty}(\R^N)$.
\end{lemma}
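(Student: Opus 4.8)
The statement is a piece of regularity bookkeeping, and the plan is to split $f$ into its low- and high-frequency parts and to reconstruct the high-frequency part from $\nabla f$ by a suitably truncated inverse Laplacian. Write $f=\Delta_{-1}f+g$ with $g:=(\Id-\Delta_{-1})f$. The low-frequency piece $\Delta_{-1}f$ has spectrum in a fixed ball and belongs to $L^\infty$ (since $f$ does), so by the first Bernstein inequality of Lemma \ref{l:bern} one has $\|\nabla^k\Delta_{-1}f\|_{L^\infty}\le C^{k+1}\|f\|_{L^\infty}$ for every $k$; in particular $\Delta_j\Delta_{-1}f=0$ for $j\ge1$, whence $\Delta_{-1}f\in B^{s}_{\infty,\infty}$ for every $s\in\R$, with norm bounded by $\|f\|_{L^\infty}$, and in particular $\Delta_{-1}f\in B^{1+\alpha}_{\infty,\infty}$.

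For $g$, note that $\widehat g$ is supported in a region $\{|\xi|\ge c_0\}$ for some $c_0>0$. Fix a radial $\theta\in\mc{C}^\infty(\R^N)$ that vanishes in a neighbourhood of the origin and equals $1$ on $\{|\xi|\ge c_0\}$, and set $G_k(\xi):=\theta(\xi)\,\xi_k\,|\xi|^{-2}$ for $1\le k\le N$. Each $G_k$ is smooth on all of $\R^N$ (it vanishes near the origin) and homogeneous of degree $-1$ away from the origin. Since $\sum_{k}\xi_k\,G_k(\xi)=\theta(\xi)$ and $\theta\equiv1$ on the spectrum of $g$, a Fourier transform computation gives
\[
g\;=\;-\,i\sum_{k=1}^{N}G_k(D)\,\d_k g\,.
\]
Now $\d_k g=\d_k f-\d_k\Delta_{-1}f$, where the first term lies in $\mc{C}^\alpha=B^\alpha_{\infty,\infty}$ by hypothesis and the second is bounded with spectrum in a ball, hence again in $B^\alpha_{\infty,\infty}$; therefore $\d_k g\in B^\alpha_{\infty,\infty}$. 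Applying Proposition \ref{p:CZ} to $G_k$ with $m=-1$, the operator $G_k(D)$ maps $B^\alpha_{\infty,\infty}$ into $B^{1+\alpha}_{\infty,\infty}$, so $g\in B^{1+\alpha}_{\infty,\infty}$. Combining the two pieces, $f=\Delta_{-1}f+g\in B^{1+\alpha}_{\infty,\infty}$, which is the space $\mc{C}^{1,\alpha}$ by the identification recalled in section \ref{s:tools}.

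I do not expect any real obstacle here. The only point deserving care is that $\xi\mapsto\xi_k|\xi|^{-2}$ is not smooth at the origin, so Proposition \ref{p:CZ} cannot be applied directly to $\d_k(-\Delta)^{-1}$ acting on all of $f$ — peeling off $\Delta_{-1}f$ first and using the cut-off $\theta$ to modify the symbol near $\xi=0$ is exactly what removes this difficulty. If one prefers to read the hypothesis as asserting only the finiteness of the $\alpha$-H\"older seminorm of $\nabla f$, it is also worth noting (by an elementary estimate of $f$ along the direction of $\nabla f(x)$, which uses $f\in L^\infty$) that $\nabla f$ is then automatically bounded, so that indeed $\nabla f\in\mc{C}^{0,\alpha}_b\hra B^\alpha_{\infty,\infty}$.
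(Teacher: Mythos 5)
Your proof is correct and takes essentially the same route as the paper's one-line argument: control the low-frequency block by $\|f\|_{L^\infty}$ and gain a factor of $2^{-j}$ on each high-frequency block of $f$ from the corresponding block of $\nabla f$. Your truncated inverse-gradient multiplier $G_k(D)$ combined with Proposition~\ref{p:CZ} is a global repackaging of the block-by-block second Bernstein estimate $\|\Delta_j f\|_{L^\infty}\lesssim 2^{-j}\|\Delta_j\nabla f\|_{L^\infty}\lesssim 2^{-j(1+\alpha)}\|\nabla f\|_{\mc{C}^\alpha}$ that the paper's hint is pointing to.
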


\begin{proof}
 It's obvious using dyadic characterization of H\"older spaces and Bernstein's inequalities.
\end{proof}

Now, by analogy, one may ask if this property still holds true for a function defined on a submanifold, with H\"older continuous
tangential derivatives. The answer is yes, with some additional hypothesis on the submanifold.
\begin{prop} \label{p:Hold-man}
 Let $\Sigma\subset\R^N$ be a submanifold of dimension $k$ and of class $\mc{C}^{1,\veps}$, for some $\veps>0$. Moreover, let us
suppose $\Sigma$ to be compact. \\
Let us consider a function $f:\Sigma\ra\R$, bounded on $\Sigma$, such that
$\d_Xf\in\mc{C}^\veps(\Sigma)$ for all vector-fields $X$ of class $\mc{C}^\veps$ tangent to $\Sigma$.

Then $f\in\mc{C}^{1,\veps}(\Sigma)$.
\end{prop}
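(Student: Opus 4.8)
The plan is to reduce the statement to Lemma~\ref{l:D->Hold} read in local coordinates. Fix $p\in\Sigma$ and choose a local parametrization $\vphi:V\ra U:=B\cap\Sigma$ of class $\mc{C}^{1,\veps}$ with inverse of class $\mc{C}^{1,\veps}$, where $V\subset\R^k$ is open and $B\subset\R^N$ is an open ball around $p$. Since $f$ is bounded, $f\circ\vphi\in L^\infty(V)$, so (by the analogue of Lemma~\ref{l:D->Hold} on $\R^k$, applied after a routine cut-off: multiply $f\circ\vphi$ by $\eta\in\mc{C}^\infty_0(V)$ equal to $1$ near $\vphi^{-1}(p)$, which makes sense once we know $\nabla(f\circ\vphi)$ is bounded near $\vphi^{-1}(p)$) it is enough to prove that each partial derivative $\d_j(f\circ\vphi)$, $1\le j\le k$, belongs to $\mc{C}^\veps$ in a neighbourhood of $\vphi^{-1}(p)$.

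By the chain rule, $\d_j(f\circ\vphi)(x)=df_{\vphi(x)}\bigl[\d_{x_j}\vphi(x)\bigr]$, and $\d_{x_j}\vphi(x)\in T_{\vphi(x)}\Sigma$. Since $\vphi\in\mc{C}^{1,\veps}$, the map $x\mapsto\d_{x_j}\vphi(x)$ has $\mc{C}^\veps$ components, and hence so does the tangent vector field $Z_j:=(\d_{x_j}\vphi)\circ\vphi^{-1}$ on $U$ (here one uses Proposition~\ref{p:comp}~(ii) to compose the $\mc{C}^\veps$ map $\d_{x_j}\vphi$ with $\vphi^{-1}$, which locally is $\mc{C}^1_b$ with bounded Jacobian inverse). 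The crux of the argument is then to \emph{globalize} $Z_j$: representing $\Sigma$ near $p$ as a $\mc{C}^{1,\veps}$ graph, extending the differential of the graph map off $\Sigma$ as a $\mc{C}^\veps$ function, and multiplying by a smooth cut-off, one produces a vector field $X_j$ of class $\mc{C}^\veps$ and tangent to $\Sigma$ which coincides with $Z_j$ on a neighbourhood $U'$ of $p$. Consequently, for $x\in\vphi^{-1}(U')$ one has $\d_j(f\circ\vphi)(x)=(\d_{X_j}f)(\vphi(x))$.

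By hypothesis $\d_{X_j}f\in\mc{C}^\veps(\Sigma)$, which by the very definition of $\mc{C}^\veps(\Sigma)$ means that $(\d_{X_j}f)\circ\vphi\in\mc{C}^\veps$ (alternatively, invoke Proposition~\ref{p:comp}~(ii) directly). Hence $\d_j(f\circ\vphi)\in\mc{C}^\veps$ near $\vphi^{-1}(p)$ for every $j=1,\dots,k$, and Lemma~\ref{l:D->Hold} gives $f\circ\vphi\in\mc{C}^{1,\veps}$ near $\vphi^{-1}(p)$. Since $p\in\Sigma$ was arbitrary and $\Sigma$ is compact, finitely many such charts cover $\Sigma$ and we conclude $f\in\mc{C}^{1,\veps}(\Sigma)$.

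The only genuinely non-routine step is the globalization of $Z_j$ into a $\mc{C}^\veps$ tangent vector field $X_j$ on $\Sigma$: it must respect the pointwise tangency constraint while $\Sigma$ carries merely $\mc{C}^{1,\veps}$ regularity. This is handled by the standard local-graph description of $\Sigma$, a $\mc{C}^\veps$ extension (Whitney/Stein) of the finitely many $\mc{C}^\veps$ coefficients involved, and a smooth cut-off; everything else — the chain rule, the composition estimate of Proposition~\ref{p:comp}~(ii), and the ``reverse Bernstein'' Lemma~\ref{l:D->Hold} — is used off the shelf.
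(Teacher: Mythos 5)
Your proof follows the same route as the paper: pass to local coordinates via a $\mc{C}^{1,\veps}$ parametrization $\vphi$, identify $\d_j(f\circ\vphi)$ with $(\d_{X_j}f)\circ\vphi$ where $X_j=\vphi_*(\d_j)$, invoke the hypothesis together with Proposition~\ref{p:comp}~(ii) to get $\nabla(f\circ\vphi)\in\mc{C}^\veps$, and finish with Lemma~\ref{l:D->Hold} and compactness. The one place where you diverge is the ``globalization'' step: you go to some trouble (graph representation, Whitney/Stein extension, cut-off) to promote the local pushforward fields $Z_j$ to genuine $\mc{C}^\veps$ tangent vector fields on a neighbourhood of $\Sigma$, on the grounds that the hypothesis might require globally defined $X$. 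The paper simply applies the hypothesis directly to the local fields $X_i=\vphi_*(\d_i)$ and, in the following Remark~\ref{r:Hold-man}, explicitly notes that it suffices to have the hypothesis for a family of locally defined, linearly independent $\mc{C}^\veps$ tangent fields; so the extension step you flag as the ``only genuinely non-routine step'' is in fact avoidable under the intended (local) reading of the hypothesis. Your version is more literal about the statement and would be the safer reading in a context where the test fields must genuinely lie in $\mc{T}^\veps_\Sigma$ (e.g.\ to plug into Corollary~\ref{c:Hold-man}), but for the proposition itself the paper's shortcut is legitimate and your extra machinery buys nothing beyond that reassurance.
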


\begin{proof}
 Let us fix a coordinate set $U:=B\cap\Sigma$ (for some open ball $B\subset\R^N$) with its $\mc{C}^{1,\veps}$ local parametrization
$\vphi:\R^k\ra U$, and let us define $g:=f\circ\vphi\,:\R^k\ra\R$.

Obviously, $g\in L^\infty(\R^k)$, because $f\in L^\infty(\Sigma)$.

Moreover, for all $1\leq i\leq k$ let us set $\vphi_*(\d_i)=X_i\,$: then, $X_i$ is obviously of class $\mc{C}^\veps$.
Hence we have $\d_ig(x)=X_i(f)\!\left(\vphi(x)\right)$, i.e. $\d_ig$ at a point $x$ is the derivation $X_i$ applied to
the function $f$, and evaluated at the point $\vphi(x)$. In our notations, we get $\d_ig=(\d_{X_i}f)\circ\vphi$.

Therefore, from our hypothesis it follows that $\nabla g\in\mc{C}^\veps$, and so, by lemma \ref{l:D->Hold},
$g\in\mc{C}^{1,\veps}(\R^k)$.

In conclusion, we have proved that $f$ composed with any local parametrization $\vphi$ is of class $\mc{C}^{1,\veps}$
on $\R^k$. Therefore $f\in\mc{C}^{1,\veps}(\Sigma)$, and, as $\Sigma$ is compact, we can bound its H\"older norm globally.
\end{proof}

\begin{rem} \label{r:Hold-man}
 Let us note that the operator $\d_X$ depends linearly on the vector-field $X$. Hence, in the hypothesis of the previous
proposition it's enough to assume that one can find, locally on $\Sigma$,
a family $\left\{X_1,\ldots,X_k\right\}$ of linearly independent vector-fields of class $\mc{C}^\veps$ such that
$\d_{X_i}f\in\mc{C}^\veps(\Sigma)$ for all $1\leq i\leq k$.
\end{rem}

\begin{coroll} \label{c:Hold-man}
 Let $\Sigma\subset\R^N$ be a compact hypersurface of class $\mc{C}^{1,\veps}$, and let $f\in\mc{C}^\veps(\R^N)$.

If $f\in\mc{C}^{1+\veps}_\Sigma$, then $f_{|\Sigma}\in\mc{C}^{1,\veps}(\Sigma)$.
\end{coroll}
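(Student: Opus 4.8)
The plan is to reduce the statement, via the striated‑regularity machinery already developed, to Proposition~\ref{p:Hold-man}. Since $\Sigma$ is a compact $\mc{C}^{1,\veps}$ hypersurface, Proposition~\ref{p:con->stri} furnishes a non‑degenerate family $X=\left(X_\lambda\right)_{1\leq\lambda\leq m}$, with $m=N(N+1)/2$, of vector‑fields in $\mc{T}^\veps_\Sigma$ such that $\mc{C}^\eta_\Sigma\subset\mc{C}^\eta_X$ for all $\eta\in[\veps,1+\veps]$. In particular the hypothesis $f\in\mc{C}^{1+\veps}_\Sigma$ yields $f\in\mc{C}^{1+\veps}_X$, that is $\div(f\,X_\lambda)\in\mc{C}^\veps$ for each $\lambda$. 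Writing $\d_{X_\lambda}f\,=\,\div(f\,X_\lambda)\,-\,f\,\div X_\lambda$ as in the proof of Lemma~\ref{l:div}, and observing that the product of the two bounded $\mc{C}^\veps$ functions $f$ and $\div X_\lambda$ is again in $\mc{C}^\veps$ (e.g.\ by Corollary~\ref{c:op} with $s=\veps$), I would conclude $\d_{X_\lambda}f\in\mc{C}^\veps(\R^N)$ for every $\lambda$.

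Next I would transfer this information to the submanifold. Each $X_\lambda$ is tangent to $\Sigma$ and has components in $\mc{C}^\veps(\R^N)$, hence its restriction to $\Sigma$ is a well‑defined $\mc{C}^\veps$ section of $T\Sigma$: in a local parametrization $\vphi$ this amounts to composing $\mc{C}^\veps$ functions with the Lipschitz map $\vphi$ and expressing the result in the frame $\vphi_*(\d_i)$, which requires only the $\mc{C}^\veps$‑dependent pseudo‑inverse of the full‑rank matrix $\nabla\vphi$. Moreover, because $X_\lambda$ is tangent to $\Sigma$, for any ambient $g\in\mc{C}^\veps(\R^N)$ one has $(\d_{X_\lambda}g)_{|\Sigma}\,=\,\d_{X_{\lambda|\Sigma}}\!\left(g_{|\Sigma}\right)$ (the differential of $g$ at a point of $\Sigma$, evaluated on a tangent vector, coincides with that of $g_{|\Sigma}$); applying this with $g=f$ gives $\d_{X_{\lambda|\Sigma}}\!\left(f_{|\Sigma}\right)\in\mc{C}^\veps(\Sigma)$. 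It remains to see that these restricted fields form, locally, a frame: the non‑degeneracy $I(X)>0$ forces, at every $x\in\Sigma$, some $(N-1)$‑tuple among the $X_\lambda(x)$ to be linearly independent, and since these are tangent to $\Sigma$ such a tuple spans the $(N-1)$‑dimensional space $T_x\Sigma$. By continuity, near any fixed $x_0\in\Sigma$ a fixed such sub‑family $\left\{X_{\lambda_1},\dots,X_{\lambda_{N-1}}\right\}$ stays linearly independent, hence constitutes a local $\mc{C}^\veps$ frame of $T\Sigma$ on a neighbourhood of $x_0$ in $\Sigma$.

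Finally I would invoke Proposition~\ref{p:Hold-man} together with Remark~\ref{r:Hold-man}: the function $f_{|\Sigma}$ is bounded (as $f\in\mc{C}^\veps(\R^N)\subset L^\infty$), and near each point of $\Sigma$ we have exhibited $N-1$ linearly independent $\mc{C}^\veps$ tangent vector‑fields along which the derivative of $f_{|\Sigma}$ lies in $\mc{C}^\veps(\Sigma)$; hence $f_{|\Sigma}\in\mc{C}^{1,\veps}$ in a neighbourhood of each point, and by compactness of $\Sigma$ globally, which is the claim. The only genuinely non‑routine point is the step from ``$\d_Xf\in\mc{C}^\veps(\R^N)$ for ambient tangent $X$'' to ``enough intrinsic $\mc{C}^\veps$ tangent fields with $\mc{C}^\veps$‑derivative of $f_{|\Sigma}$'', and this is exactly where the strength of Proposition~\ref{p:con->stri} is needed: a hand‑made local tangent frame typically has divergence only of class $\mc{C}^{\veps-1}$, so it would not lie in $\mc{T}^\veps_\Sigma$, whereas Proposition~\ref{p:con->stri} supplies a family that is simultaneously inside $\mc{T}^\veps_\Sigma$ and non‑degenerate, hence spans $T\Sigma$ pointwise. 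Everything else is bookkeeping with Hölder norms in local charts.
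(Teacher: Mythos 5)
Your proof follows essentially the same route as the paper: use Proposition \ref{p:con->stri} to produce a non-degenerate tangent family in $\mc{T}^\veps_\Sigma$, pass from $\div(fX_\lambda)\in\mc{C}^\veps$ to $\d_{X_\lambda}f\in\mc{C}^\veps(\R^N)$ via the identity $\d_{X_\lambda}f=\div(fX_\lambda)-f\,\div X_\lambda$ and the algebra property of $\mc{C}^\veps\cap L^\infty$, then conclude by Proposition \ref{p:Hold-man} and Remark \ref{r:Hold-man}. The only difference is that you spell out explicitly the transfer to the intrinsic setting (well-definedness of $X_{\lambda|\Sigma}$ as a $\mc{C}^\veps$ section of $T\Sigma$, the identity $(\d_{X_\lambda}f)_{|\Sigma}=\d_{X_{\lambda|\Sigma}}(f_{|\Sigma})$, and the pointwise frame argument from non-degeneracy), which the paper leaves implicit.
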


\begin{proof}
 By proposition \ref{p:con->stri} and non-degeneracy condition, we can find, locally on $\Sigma$, $N-1$ linearly independent
vector-fileds $X_1\ldots X_{N-1}$, defined on the whole $\R^N$ and of class $\mc{C}^\veps$, which are tangent to $\Sigma$ and
such that $\div\!\left(f\,X_i\right)\in\mc{C}^\veps(\R^N)$ for all $1\leq i\leq N-1$.

Moreover, also the divergence of these vector-fields is $\veps$-H\"older continuous; therefore, using also Bony's paraproduct
decomposition, we gather that
$$
\d_{X_i}f\,=\,\div\!\left(f\,X_i\right)\,-\,f\,\,\div X_i\;\,\in\;\,\mc{C}^\veps(\R^N)\qquad\forall\,\,1\leq i\leq N-1\,,
$$
and hence this regularity is preserved if we restrict $\d_{X_i}f$ only to $\Sigma$.

So, proposition \ref{p:Hold-man} and remark \ref{r:Hold-man} both imply that $f_{|\Sigma}\in\mc{C}^{1,\veps}(\Sigma)$.
\end{proof}

Now, let us come back to the situation of theorem \ref{t:conorm-N}. Moreover, let us suppose that the hypersurface
$\Sigma_0$ is also connected: then it separates the whole space $\R^N$ into two connected components,
the first one bounded and the other one unbounded, and whose boundary is exactly $\Sigma_0$.
In dimension $2$, this is nothing but the Jordan curve theorem, while in
the general case $N\geq3$ it's a consequence of the Alexander duality theorem (see e.g. \cite{Hat}, theorem 3.44).
For the sake of completeness, we will quote the exact statement and its proof
in appendix \ref{app:alg_top}.

So, let us set $D_0$ to be the bounded domain of $\R^N$ whose
boundary is $\d D_0=\Sigma_0$ and let us define $D(t)=\psi_t(D_0)$. As the flow $\psi_t$ is a diffeomorphism for every fixed time $t$,
we have that $\d D(t)=\Sigma(t)$ and also the complementary region is transported by $\psi$: $D(t)^c=\psi_t(D^c_0)$.

Let us denote by $\chi_\mc{O}$ the characteristic function of a set $\mc{O}$.

\begin{theo} \label{t:v-patches}
  Under hypothesis of theorem \ref{t:conorm-N}, suppose also that the initial data can be decomposed in the following way:
$$
 \rho_0(x)\,=\,\rho^i_0(x)\,\chi_{D_0}(x)\,+\,\rho^e_0(x)\,\chi_{D^c_0}(x)\quad\mbox{ and }\quad
\Omega_0(x)\,=\,\Omega^i_0(x)\,\chi_{D_0}(x)\,+\,\Omega^e_0(x)\,\chi_{D^c_0}(x)\,,
$$
with $\rho^i_0\in\mc{C}^{1,\veps}(D_0)$ and $\Omega^i_0\in\mc{C}^\veps(D_0)$.

Then, the previous decomposition still holds for the solution at every time $t\in[0,T]$:
\begin{eqnarray}
 \rho(t,x) & = & \rho^i(t,x)\,\chi_{D(t)}(x)\,+\,\rho^e(t,x)\,\chi_{D(t)^c}(x) \label{eq:dec_rho} \\
\Omega(t,x) & = & \Omega^i(t,x)\,\chi_{D(t)}(x)\,+\,\Omega^e(t,x)\,\chi_{D(t)^c}(x)\,. \label{eq:dec_vort}
\end{eqnarray}
Moreover, H\"older continuity in the interior of the domain $D(t)$ is preserved, uniformly
on $[0,T]$: at every time $t$, we have
$$
\rho^i(t)\,\in\,\mc{C}^{1,\veps}(D(t))\qquad\mbox{ and }\qquad
\Omega^i(t)\,\in\,\mc{C}^\veps(D(t))
$$
and regularity on $D(t)$ propagates also for the velocity field and the pressure term: $u(t)$ and
$\nabla\Pi(t)$ both belong to $\mc{C}^{1,\veps}(D(t))$.
\end{theo}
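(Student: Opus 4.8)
The plan is to transport the decomposition by the flow and then propagate, along the time interval $[0,T]$ furnished by theorem \ref{t:stri-N}, the one-sided H\"older regularity of each unknown by a coupled Gronwall argument. Since $\rho$ obeys a pure transport equation, $\rho(t)=\rho_0\circ\psi^{-1}_t$; as $\psi_t$ is a diffeomorphism with $\psi_t(D_0)=D(t)$ and $\psi_t(D^c_0)=D(t)^c$, this yields at once \eqref{eq:dec_rho} with $\rho^i(t):=\rho^i_0\circ\psi^{-1}_t$ and $\rho^e(t):=\rho^e_0\circ\psi^{-1}_t$, while \eqref{eq:dec_vort} is a tautology once one recalls $\Omega(t)\in L^\infty$ (it belongs to the solution class), the real content being the regularity of the pieces. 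Thus everything reduces to proving: (a) $\psi_t$ restricted to $\overline{D_0}$ and $\psi^{-1}_t$ restricted to $\overline{D(t)}$ are of class $\mc{C}^{1,\veps}$ (and likewise on the closed complements); (b) $u(t)$ and $\nabla\Pi(t)$ are $\mc{C}^{1,\veps}$ up to $\Sigma(t)$ from each side; (c) $\Omega^i(t)\in\mc{C}^\veps(D(t))$ and $\Omega^e(t)\in\mc{C}^\veps(D(t)^c)$. Granting (a)--(b), the fact that $\rho^i(t)=\rho^i_0\circ\psi^{-1}_t\in\mc{C}^{1,\veps}(D(t))$ follows from the chain rule and proposition \ref{p:comp}.

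For (a) I would differentiate the defining relation of the flow: $\nabla\psi_t$ solves the linear ODE $\partial_t\nabla\psi_t=(\nabla u)(t,\psi_t)\,\nabla\psi_t$ with $\nabla\psi_0=\Id$, and since a trajectory issued from $D_0$ stays in $D(\tau)$ for all $\tau$, only the restriction of $\nabla u(\tau)$ to $\overline{D(\tau)}$ enters. Granting the uniform bound $\|\nabla u(\tau)\|_{\mc{C}^\veps(\overline{D(\tau)})}\leq C$ coming from (b), the classical argument used for homogeneous vortex patches (see \cite{B-C-D}, chapter 7, and \cite{Ch1991}) gives $\nabla\psi_t\in\mc{C}^\veps(\overline{D_0})$ uniformly on $[0,T]$, hence $\psi_t|_{\overline{D_0}}\in\mc{C}^{1,\veps}$; the inverse flow is treated identically. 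For (b) I would recover $u$ from $\Omega$ by the Biot--Savart law \eqref{eq:BS-law}: once (c) is known, $\Omega(t)$ is piecewise $\mc{C}^\veps$ across the $\mc{C}^{1,\veps}$ interface $\Sigma(t)$, so the singular-integral/layer-potential analysis at the heart of vortex-patch theory yields $u(t)\in\mc{C}^{1,\veps}$ up to $\Sigma(t)$ on each side; feeding this together with the piecewise $\mc{C}^{1,\veps}$ regularity of $\rho$ into the elliptic pressure equation \eqref{eq:Lapl-Pi} and bootstrapping on $\Delta\Pi$ — whose right-hand side is then piecewise $\mc{C}^\veps$ — gives $\nabla\Pi(t)\in\mc{C}^{1,\veps}$ up to $\Sigma(t)$ from each side.

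For (c) I would pass to Lagrangian variables, $\widetilde{\Omega}(t,y):=\Omega(t,\psi_t(y))$ on $D_0$, so that by \eqref{eq:vort}
\[
\widetilde{\Omega}(t,y)=\Omega^i_0(y)+\int_0^t\Bigl(-\,\Omega\cdot\nabla u-{}^t\nabla u\cdot\Omega-\nabla\tfrac{1}{\rho}\wedge\nabla\Pi\Bigr)(\tau,\psi_\tau(y))\,d\tau\,;
\]
restricted to $D(\tau)$ the integrand is a product of the $\mc{C}^\veps(\overline{D(\tau)})$ quantities produced in (a)--(b), so proposition \ref{p:comp} and Gronwall's lemma bound $\|\widetilde{\Omega}(t)\|_{\mc{C}^\veps(D_0)}$ on $[0,T]$, whence $\Omega^i(t)=\widetilde{\Omega}(t)\circ\psi^{-1}_t\in\mc{C}^\veps(D(t))$; the exterior piece is the same. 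The point to be careful about is that (a), (b) and (c) are genuinely coupled and cannot be proved in sequence: the remedy is the usual one, namely to set up a single closed system of a priori inequalities for $\|\psi_t\|_{\mc{C}^{1,\veps}(\overline{D_0})}$, $\|\Omega(t)\|_{\mc{C}^\veps(\overline{D(t)})}$, $\|u(t)\|_{\mc{C}^{1,\veps}(\overline{D(t)})}$, $\|\nabla\Pi(t)\|_{\mc{C}^{1,\veps}(\overline{D(t)})}$ and their exterior analogues, all of ``linear-plus-integral'' type thanks to the Lipschitz control on $u$ and $\nabla\Pi$ already established in theorem \ref{t:stri-N}; Gronwall's lemma then closes the bounds on the \emph{same} interval $[0,T]$, with no further restriction on $T$.

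I expect the main obstacle to be step (b): the elliptic analysis of the velocity and — especially — of the pressure across a merely $\mc{C}^{1,\veps}$ interface, in the presence of the variable density coefficient $1/\rho$, is the delicate ingredient, and it is here that one leans most heavily on the techniques of the homogeneous vortex-patch theory, modified to absorb the new density--pressure contributions. The restriction to $N=2,3$ enters only through the (by now elementary, but in higher dimension more painful) identification between conormal regularity and regularity of the submanifold $\Sigma(t)$, already prepared by proposition \ref{p:Hold-man}, corollary \ref{c:Hold-man} and the Jordan/Alexander separation statement recalled before the theorem.
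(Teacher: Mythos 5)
Your overall outline — transport the decomposition by the flow, estimate the interior regularity via a coupled ODE/elliptic system, and close by Gronwall — is the right skeleton, but the critical step (b) is handled quite differently in the paper, and the difference is the whole point. Where you propose to recover the one‑sided $\mc{C}^{1,\veps}$ regularity of $u$ and $\nabla\Pi$ by a singular‑integral/layer‑potential analysis of the Biot--Savart operator across the $\mc{C}^{1,\veps}$ interface (which, as you yourself flag, is the delicate part, and becomes genuinely painful for $\nabla\Pi$ because of the variable coefficient $1/\rho$), the paper sidesteps this entirely. It observes that theorem \ref{t:conorm-N}, which is already proved, gives $u(t),\nabla\Pi(t)\in\mc{C}^{1+\veps}_{\Sigma(t)}$, and then corollary \ref{c:Hold-man} converts this into $u_{|\Sigma(t)},\nabla\Pi_{|\Sigma(t)}\in\mc{C}^{1,\veps}(\Sigma(t))$, i.e.\ into ready‑made Dirichlet boundary data. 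With this in hand, $u$ solves $-\Delta u^k=\sum_j\partial_j\Omega^i_{kj}$ in the bounded $\mc{C}^{1,\veps}$ domain $D(t)$ with $\mc{C}^{1,\veps}$ boundary data, and $\Pi$ solves $-\Delta\Pi=\nabla(\log\rho^i)\cdot\nabla\Pi+\rho^i\,\nabla u:\nabla u$ with continuous boundary data; one then invokes interior Schauder theory on bounded domains (Gilbarg--Trudinger, theorems 8.33 and 6.13), with the constants controlled uniformly on $[0,T]$ thanks to $\int_0^T\|\nabla u\|_{L^\infty}\,dt\leq C$. This is far more economical than transmission/layer‑potential estimates and is the essential idea you are missing. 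Two lesser remarks: your Lagrangian reformulation of the vorticity estimate is equivalent to the paper's Eulerian one (transport estimate for the restricted equation), so no issue there; and your closing comment misattributes the restriction to $N=2,3$ — proposition \ref{p:Hold-man}, corollary \ref{c:Hold-man} and the Alexander‑duality separation theorem are all dimension‑independent, so the restriction in the surrounding text is for physical relevance, not because the machinery fails for $N\geq4$.
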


\begin{proof}
First of all, let us recall that, by theorem \ref{t:conorm-N}, on $[0,T]$ we have
\begin{equation} \label{est:L^1_t-Du}
 \int^T_0\left\|\nabla u(t)\right\|_{L^\infty}\,dt\,\leq\,C\,.
\end{equation}

Thanks to first equation of \eqref{eq:ddeuler}, relation \eqref{eq:dec_rho} obviously holds, with
$$
\rho^{i,e}(t,x)\,=\,\rho^{i,e}_0\left(\psi^{-1}_t(x)\right)\,.
$$
So, we immediately get that $\rho^i(t)$ belongs to the space $\mc{C}^{1,\veps}(D(t))$.
Let us observe also that a decomposition analogous to \eqref{eq:dec_rho} holds also for $a=1/\rho$, and its components
$a^{i,e}$ have the same properties of the corresponding ones of $\rho$.

Now let us handle the vorticity term. We can always decompose the solution in a component localised on $D(t)$ and the other one
supported on the complementary set, defining
$$
\Omega^i(t,x)\,:=\,\Omega(t,x)\,\chi_{D(t)}(x)\;,\qquad\Omega^e(t,x)\,:=\,\Omega(t,x)\,\chi_{D(t)^c}(x)\,,
$$
and therefore obtain relation \eqref{eq:dec_vort}. By virtue of this fact, equation \eqref{eq:vort} restricted on the domain
$D(t)$ reads as follows:
$$
\d_t\Omega^i\,+\,u\cdot\nabla\Omega^i\,=\,-\left(\Omega^i\cdot\nabla u\,+\,^t\nabla u\cdot\Omega^i\,+\,
\nabla a^i\wedge\nabla\Pi\right),
$$
which gives us the estimate (keep in mind also \eqref{est:L^1_t-Du})
$$
 \left\|\Omega^i(t)\right\|_{\mc{C}^\veps}\,\leq\,C\,
\biggl(\left\|\Omega^i_0\right\|_{\mc{C}^\veps}\,+\,\int^t_0
\left(\left\|\Omega^i\cdot\nabla u+\,^t\nabla u\cdot\Omega^i\right\|_{\mc{C}^\veps}+
\left\|\nabla a^i\wedge\nabla\Pi\right\|_{\mc{C}^\veps}\right)d\tau\biggr)\,.
$$
We claim that the first term under the integral can be controlled in $\mc{C}^\veps$. As a matter of facts,
by \eqref{eq:BS-law} we know that the velocity field satisfies the elliptic equation
$$
-\,\Delta u^k\,=\,\sum_{j=1}^N\d_j\Omega^i_{kj}
$$
in $D(t)$, with the boundary condition (by theorem \ref{t:conorm-N} and corollary \ref{c:Hold-man})
$u_{|\d D(t)}\in\mc{C}^{1,\veps}(\d D(t))$. So
(see theorem 8.33 of \cite{G-T}) we have that $u_{|D(t)}\in\mc{C}^{1,\veps}(D(t))$ and the following inequality holds:
$$
\|u\|_{\mc{C}^{1,\veps}(D(t))}\,\leq\,C\left(\|u(t)\|_{L^\infty(D(t))}\,+\,
\left\|u_{|\d D(t)}\right\|_{\mc{C}^{1,\veps}(\d D(t))}\,+\,\left\|\Omega^i\right\|_{\mc{C}^\veps(D(t))}\right).
$$
Let us note that, as pointed out in \cite{G-T}, a priori the constant $C$ depends on $\d D(t)$ through the
$\mc{C}^{1,\veps}$ norms of its local parametrizations, so finally on $\exp\left(\int^t_0\|\nabla u\|_{L^\infty}d\tau\right)$.
However relation \eqref{est:L^1_t-Du} allows us to control it uniformy on $[0,T]$.
Therefore, in $D(t)$ one gets the following inequality:
$$
\left\|\Omega^i\cdot\nabla u+\,^t\nabla u\cdot\Omega^i\right\|_{\mc{C}^\veps(D(t))}\,\leq\,C\,
\left\|\Omega^i\right\|_{\mc{C}^\veps(D(t))}\,\|u_{|D(t)}\|_{\mc{C}^{1,\veps}(D(t))}\,,
$$
which proves our claim.

Finally, let us handle the pressure term.
From what we have proved, $\nabla a^i$ is in $\mc{C}^\veps$; so
$$
\left\|\nabla a^i\wedge\nabla\Pi\right\|_{\mc{C}^\veps}\,\leq\,C\,\left\|\nabla a^i\right\|_{\mc{C}^\veps}\,
\|\nabla\Pi\|_{\mc{C}^1_*(\R^N)}\,.
$$
However, we want to prove that an improvement of regularity in the interior of $D(t)$ occurs also for $\nabla\Pi$.
In fact, keeping in mind \eqref{eq:Lapl-Pi}, $\Pi$ satisfies the elliptic equation
$$
-\,\Delta\Pi\,=\,\nabla(\log \rho^i)\cdot\nabla\Pi\,+\,\rho^i\,\nabla u:\nabla u
$$
in the bounded domain $D(t)$. Now, from what we have proved, the right-hand side obviously belongs to $\mc{C}^\veps(D(t))$.
Moreover, by theorem \ref{t:conorm-N}
and corollary \ref{c:Hold-man}, we have $\nabla\Pi_{|\d D(t)}\in\mc{C}^{1,\veps}(\d D(t))$: in particular, as $\Sigma(t)$
is compact, $\Pi_{|\d D(t)}$ is continuous. Finally, as $D(t)$ is of class $\mc{C}^{1,\veps}$, it satisfies the exterior
cone condition (see \cite{Dau-Lions}, page 340). So, theorem 6.13 of \cite{G-T} applies: from it, we gather
$\Pi(t)\in\mc{C}^{2,\veps}(D(t))$. Therefore, $\nabla\Pi(t)_{|D(t)}\in\mc{C}^{1,\veps}(D(t))$ and its norm is bounded
by
$$
\left\|\nabla\Pi_{|\d D(t)}\right\|_{\mc{C}^{1,\veps}(\d D(t))}+
\left\|\nabla a^i\right\|_{\mc{C}^\veps(D(t))}\left\|\nabla\Pi\right\|_{\mc{C}^1_*(\R^N)}+
\left\|\rho^i\right\|_{\mc{C}^{1,\veps}(D(t))}\left\|\nabla u\right\|^2_{\mc{C}^\veps(D(t))}\,.
$$

Putting all these inequalities together and applying Gronwall's lemma, we finally get a control for the $\mc{C}^\veps$ norm of
$\Omega^i$ in the interior of $D(t)$, and this completes the proof of the theorem.
\end{proof}


\appendix

\section{Appendix -- Complements from Algebraic Topology} \label{app:alg_top}

Here we want to prove the following theorem, which we used in section \ref{s:conormal}.
For the technical definitions, notions and results, we refer to \cite{Hat}.
\begin{theo} \label{th:piana}
 For any dimension $N\geq2$, let $\Sigma\subset\R^N$ be a compact, connected hypersurface without boundary.

Then $\R^N\!\setminus\!\Sigma$ has two connected components (say) $B$ and $U$, one bounded
and the other one unbounded, whose boundary is just $\Sigma$.
\end{theo}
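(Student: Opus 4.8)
The plan is to work in the one-point compactification $S^N=\R^N\cup\{\infty\}$, inside which $\Sigma$ sits as a compact, connected, nonempty subspace which is locally contractible (being a $\mc{C}^{1,\veps}$ submanifold, it is locally homeomorphic to $\R^{N-1}$) and proper in $S^N$. The main tool will be Alexander duality with $\Z/2$ coefficients, which gives $\tilde H_0(S^N\setminus\Sigma;\Z/2)\cong\tilde H^{N-1}(\Sigma;\Z/2)$. Since $\Sigma$ is a closed connected $(N-1)$-manifold, Poincar\'e duality over $\Z/2$ (valid with no orientability assumption) yields $H^{N-1}(\Sigma;\Z/2)\cong H_0(\Sigma;\Z/2)\cong\Z/2$, so $\tilde H_0(S^N\setminus\Sigma;\Z/2)\cong\Z/2$. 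As $S^N\setminus\Sigma$ is open in the manifold $S^N$, hence locally path-connected, its connected components coincide with its path components, and the one-dimensionality of $\tilde H_0$ forces exactly two of them, say $W_1\ni\infty$ and $W_2$.

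Next I would translate this back to $\R^N$. Both $W_i$ are open in $S^N$; since $N\geq2$, removing the single point $\infty$ from the connected open set $W_1$ leaves it connected, so $\R^N\setminus\Sigma=(W_1\setminus\{\infty\})\sqcup W_2$ is a disjoint union of two nonempty connected open sets, which are therefore precisely its connected components. Choosing $R$ with $\Sigma\subset B(0,R)$ and then $R'\geq R$ large enough that the neighbourhood $\{|x|>R'\}\cup\{\infty\}$ of $\infty$ lies in $W_1$, we see that $U:=W_1\setminus\{\infty\}$ contains $\{|x|>R'\}$ and is unbounded, while $B:=W_2$ is disjoint from $\{|x|>R'\}$ and hence bounded. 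This establishes the dichotomy.

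Finally I would identify the common boundary. Since $U$ and $B$ are open and disjoint, $\partial B$ and $\partial U$ are both contained in $\R^N\setminus(U\cup B)=\Sigma$. For the reverse inclusion I would exploit that $\Sigma$ is a hypersurface: each $x\in\Sigma$ has a submanifold chart, i.e. a neighbourhood $V$ and a homeomorphism carrying the pair $(V,V\cap\Sigma)$ onto $((-1,1)^N,(-1,1)^{N-1}\times\{0\})$, so $V\setminus\Sigma$ has exactly two connected "sides", each lying in $U$ or in $B$. Partitioning $\Sigma$ according to whether the two sides at $x$ fall into $(U,U)$, $(B,B)$, or one of each produces three pairwise disjoint open subsets of $\Sigma$, so by connectedness of $\Sigma$ exactly one is nonempty. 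The $(B,B)$ alternative is impossible, for then every point of $\Sigma$ would have a neighbourhood disjoint from $U$, making $U$ closed as well as open and hence $U=\R^N$, contradicting $B\neq\emptyset$; the $(U,U)$ alternative is excluded symmetrically. Hence at every $x\in\Sigma$ one side is in $U$ and the other in $B$, so $\Sigma\subset\partial U\cap\partial B$ and therefore $\partial U=\partial B=\Sigma$.

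The easy inputs are the compactification, the homological count, and the bounded/unbounded split; the one point that genuinely needs care is this last step, where one must combine the purely topological two-sidedness of a hypersurface with the connectedness of $\Sigma$ and of $\R^N$ to rule out the degenerate configurations in which $\Sigma$ would fail to bound one of the two regions.
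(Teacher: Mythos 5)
Your proof is correct and follows the same backbone as the paper's: pass to $S^N$, apply Alexander duality over $\Z_2$ to get $\wtilde H_0(S^N\setminus\Sigma;\Z_2)\cong\wtilde H^{N-1}(\Sigma;\Z_2)\cong\Z_2$, deduce two components, delete $\infty$ (using $N\geq2$), and sort them by boundedness. Two small remarks on the comparison. First, where the paper cites Hatcher's Theorem 3.26 (the fundamental class mod $2$) to conclude $\wtilde H^{N-1}(\Sigma;\Z_2)\cong\Z_2$, you invoke Poincar\'e duality over $\Z_2$ together with connectedness of $\Sigma$; these are the same fact packaged differently, and both are legitimate. Second, and more substantively, the paper dispatches the identity $\d B=\d U=\Sigma$ with a single ``obviously,'' whereas you supply the actual argument: using local flatness of the hypersurface you show each point of $\Sigma$ has exactly two local sides, you partition $\Sigma$ into three open pieces according to which components those sides fall into, and you use connectedness of $\Sigma$ together with connectedness of $\R^N$ to rule out the degenerate configurations. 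That step is not entirely trivial (it is where two-sidedness of a codimension-one submanifold genuinely enters), and filling it in is an improvement over the paper's treatment.
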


The previous result implies in particular that $\Sigma$ is orientable (see theorem \ref{th:orient}). \\
Let us note that, if we already assumed this (redundant) hypothesis in theorem \ref{th:piana}, then the
proof would be easier (see e.g. \cite{Lima}).

The proof we quote here is actually due to A. Lerario.

\begin{proof}
With standard notations, for a submanifold $\mc{M}\subset\R^N$ and an abelian group $G$, we denote with
$$
\wtilde{H}_k(\mc{M};G)\qquad\mbox{ and }\qquad\wtilde{H}^k(\mc{M};G)
$$
the $k$-th reduced homology and cohomology groups of $\mc{M}$ with coefficients in $G$.

\medbreak
Let us compactify $\R^N$ by adding the point at infinity: in this way, we reconduct ourselves to work with the $N$-dimensional
sphere $S^N\,=\,\R^N\cup\{\infty\}$. \\
Obviously, $\R^N\!\setminus\!\Sigma$ and $S^N\!\setminus\!\Sigma$ have the same number of connected components.

By Alexander duality theorem (see theorem 3.44 of \cite{Hat}) with coefficients in $\mbb{Z}_2$, we have
$$
\wtilde{H}_k(S^N\!\setminus\!\Sigma;\Z_2)\,\simeq\,\wtilde{H}^{N-k-1}(\Sigma;\Z_2)\qquad\forall\,\,k\geq0\,.
$$
In particular, this is true for $k=0$:
$$
\wtilde{H}_0(S^N\!\setminus\!\Sigma;\Z_2)\,\simeq\,\wtilde{H}^{N-1}(\Sigma;\Z_2)\,.
$$

Now, as $\Sigma$ is compact, connected and without boundary, theorem 3.26 of \cite{Hat} applies, and gives us
$$
\wtilde{H}^{N-1}(\Sigma;\Z_2)\,\simeq\,\Z_2
$$
(independentely whether $\Sigma$ is orientable or not). In particular, also $\wtilde{H}_0(S^N\!\setminus\!\Sigma;\Z_2)$
is isomorphic to the same group, and this implies that the homology group (not reduced!)
\begin{equation} \label{iso:cohomol}
H_0(S^N\!\setminus\!\Sigma;\Z_2)\,\simeq\,\wtilde{H}_0(S^N\!\setminus\!\Sigma;\Z_2)\,\oplus\,\Z_2
\end{equation}
has rank equal to $2$. But the rank of $H_0(\mc{M};G)$ is always the number of the connected components of $\mc{M}$.
Hence, $S^N\!\setminus\!\Sigma$ has two connected components, $A$ and $B$.

Let us suppose that $\infty\in A$; then
$$
S^N\!\setminus\!\Sigma\;=\;A\,\cup\,B\qquad\Longrightarrow\qquad
\R^N\!\setminus\!\Sigma\;=\;(A\!\setminus\!\{\infty\})\,\cup\,B\,.
$$
Now, as $N\geq2$, $U:=A\!\setminus\!\{\infty\}$ is still connected.

Hence, $U$ and $B$ are the two connected components of $\R^N\!\setminus\!\Sigma$. \\
Moreover, it's easy to see (for instance, by stereographic projection with respect to the point $\infty$)
that $U$ is unbounded, while $B$ is bounded. \\
Finally, obviously $\d B\,\equiv\,\d U\,\equiv\,\Sigma$.
\end{proof}

As already pointed out, theorem \ref{th:piana} entails the following fundamental result. Even if it lies outside of the
topics of the present paper, we decided to quote it to give a more complete and detailed picture of the framework
we adopted in section \ref{s:conormal}.

Again, the proof is due to A. Lerario.

\begin{theo} \label{th:orient}
 Let $\Sigma\subset\R^N$ (for some $N\geq2$) be a compact, connected hypersurface without boundary.

Then $\Sigma$ is orientable.
\end{theo}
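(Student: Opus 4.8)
The plan is to derive orientability of $\Sigma$ directly from Theorem \ref{th:piana} together with the standard homological criterion for orientability of a closed manifold. The key observation is that a compact connected hypersurface $\Sigma\subset\R^N$ inherits a collar neighborhood, and one can use the existence of the two-sided splitting $\R^N\setminus\Sigma = B\cup U$ to produce a nowhere-vanishing normal field, hence an orientation.

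First I would invoke Theorem \ref{th:piana}: $\R^N\setminus\Sigma$ has exactly two connected components $B$ (bounded) and $U$ (unbounded), each with boundary $\Sigma$. This says precisely that $\Sigma$ is \emph{two-sided} in $\R^N$: every point $x\in\Sigma$ has a neighborhood $V_x$ in $\R^N$ such that $V_x\setminus\Sigma$ has two components, one meeting $B$ and one meeting $U$, and these assignments are globally consistent because $B$ and $U$ are the \emph{global} components. More carefully, since $\Sigma$ is a $\mathcal{C}^{1,\veps}$ compact submanifold, it admits a tubular neighborhood $N(\Sigma)\cong \Sigma\times(-1,1)$, and $N(\Sigma)\setminus\Sigma$ has two components; each must lie entirely in $B$ or entirely in $U$ (a component of $N(\Sigma)\setminus\Sigma$ is connected, hence contained in a single component of $\R^N\setminus\Sigma$), and by looking near any single point one sees the two local sides map to the two different global components. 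This gives a well-defined continuous choice, at each $x\in\Sigma$, of ``the side towards $B$'', i.e. a continuous global section of the normal orientation bundle.

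Next I would translate two-sidedness into orientability. For a hypersurface in $\R^N$, the normal bundle is a real line bundle $\nu$, and having a consistent global choice of side is exactly a trivialization (orientation) of $\nu$; equivalently $\nu$ is a trivial line bundle. Since $T\R^N|_\Sigma \cong T\Sigma \oplus \nu$ and $T\R^N|_\Sigma$ is trivial (hence orientable), an orientation of $\nu$ determines an orientation of $T\Sigma$ via $w_1(T\Sigma)=w_1(T\R^N|_\Sigma)-w_1(\nu)=0-0=0$; so $\Sigma$ is orientable. Alternatively, and perhaps cleaner for a self-contained argument staying within the homological language already used, one can argue: by Theorem \ref{th:piana} and \eqref{iso:cohomol} the component $B$ is a connected open subset of $\R^N$ whose closure is a compact manifold-with-boundary $\overline{B}$ with $\partial\overline{B}=\Sigma$; since $\overline{B}\subset\R^N$ is an $N$-manifold-with-boundary that is orientable (as an open subset of $\R^N$, up to the boundary), its boundary $\Sigma$ is orientable by the standard ``boundary of an orientable manifold is orientable'' theorem (Hatcher, \cite{Hat}, as a consequence of the homology arguments around Theorem 3.26).

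The main obstacle is the first step: rigorously promoting the \emph{topological} statement ``$\R^N\setminus\Sigma$ has two components bounded respectively by all of $\Sigma$'' to the \emph{geometric} statement ``$\Sigma$ is two-sided'', i.e. checking that the local sides near a point of $\Sigma$ are indeed distributed one into $B$ and one into $U$ rather than both into the same component. This is where one uses that $\Sigma$ is a codimension-one $\mathcal{C}^1$ submanifold, so that it locally disconnects its tubular neighborhood into exactly two pieces, combined with the connectedness of each piece to place it in a single global component; a brief point-set argument (or a degree/intersection-number argument at a single point) closes this. Everything else — the splitting $T\R^N|_\Sigma=T\Sigma\oplus\nu$, triviality of $\nu$, and the $w_1$ bookkeeping, or equivalently the ``boundary of orientable is orientable'' fact — is standard and I would only sketch it, referring to \cite{Hat}.
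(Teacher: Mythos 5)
Your argument is correct in substance but takes a genuinely different route from the paper's. The paper stays entirely inside singular (co)homology: it uses that the \emph{number of connected components} is coefficient-independent, so the rank-$2$ conclusion for $H_0(S^N\!\setminus\!\Sigma;\Z_2)$ transfers to $\Z$ coefficients, then applies Alexander duality again (this time over $\Z$) to obtain $\wtilde{H}^{N-1}(\Sigma;\Z)\simeq\Z$, which by Hatcher's Theorem 3.26 is \emph{equivalent} to orientability of a closed connected $(N-1)$-manifold. You instead pass through a differential-topological intermediary: two-sidedness of $\Sigma$ in $\R^N$, triviality of the normal line bundle $\nu$, and then either the Whitney sum $T\R^N|_\Sigma\simeq T\Sigma\oplus\nu$ with $w_1$ bookkeeping, or the fact that the boundary of the orientable manifold-with-boundary $\overline B$ is orientable. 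What your route buys is geometric transparency (an explicit global normal field); what the paper's route buys is economy (it reuses exactly the Alexander-duality machinery already set up for Theorem \ref{th:piana}, needs no tubular-neighborhood or vector-bundle input, and in fact works in the topological category, not just $\mc{C}^1$).

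One caveat about your write-up: as stated, ``it admits a tubular neighborhood $N(\Sigma)\cong\Sigma\times(-1,1)$'' already asserts triviality of $\nu$, which is the thing to be proved, so that sentence is circular. The honest version (which you gesture at in your closing paragraph) is: $\Sigma$ has \emph{some} tubular neighborhood $N(\Sigma)$, a priori the total space of a possibly nontrivial line bundle; locally $N(\Sigma)\setminus\Sigma$ has two pieces, and globally $N(\Sigma)\setminus\Sigma$ has either one or two components. If it were connected it would lie entirely in one of $B$, $U$ — but then the other component would be disjoint from the neighborhood $N(\Sigma)$ of $\Sigma$, contradicting $\partial B=\partial U=\Sigma$. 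Hence $N(\Sigma)\setminus\Sigma$ has two components, $\nu$ is trivial, and the rest of your argument goes through. With that repair the proof is complete; it is just a different proof than the one the paper gives.
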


\begin{proof}
 The starting point is relation \eqref{iso:cohomol} in the previous proof. Actually, it holds true for any
submanifold $\mc{M}$ and any abelian group $G$:
\begin{equation} \label{iso:h_tilde}
H_0(\mc{M};G)\,\simeq\,\wtilde{H}_0(\mc{M};G)\,\oplus\,G\,.
\end{equation}
Moreover, it is always true that $H_0(\mc{M};G)$ is isomorphic to the direct product of $n$ copies of $G$,
where $n$ is the number of connected components of $\mc{M}$:
\begin{equation} \label{iso:conn_comp}
H_0(\mc{M};G)\,\simeq\,G^{\oplus n}
\end{equation}
(see \cite{Hat} for the proof of these facts).

In the previous proof, we established that the rank of $H_0(S^N\!\setminus\!\Sigma;\Z_2)$ is $2$. Then,
by \eqref{iso:conn_comp} we have that it is still $2$ if we consider the homology with coefficients in $\Z$:
$$
rk\bigl(H_0(S^N\!\setminus\!\Sigma;\Z)\bigr)\,=\,2\,.
$$
Therefore, keeping in mind \eqref{iso:h_tilde} and the Alexander duality theorem, we gather
$$
\wtilde{H}_0(S^N\!\setminus\!\Sigma;\Z)\,\simeq\,\Z\qquad\Longrightarrow\qquad
\wtilde{H}^{N-1}(\Sigma;\Z)\,\simeq\,\Z\,.
$$
Now, by theorem 3.26 of \cite{Hat}, this last condition is equivalent to the fact that $\Sigma$ is orientable.
\end{proof}


\end{document}